\documentclass[reqno,12pt]{amsart} 
\usepackage[a4paper,top=2.5cm,bottom=2.5cm,left=3cm,right=3cm]{geometry}
\usepackage{mathtools} 
\usepackage{amsfonts,amssymb,amsthm,stmaryrd}
\usepackage{mathrsfs} 
\usepackage{bbold} 
\usepackage{xfrac} 
\usepackage{stackengine} 
\usepackage{bm} 
\usepackage{enumitem}
\usepackage{graphicx}
\usepackage{float} 
\usepackage{tikz} 
\usepackage{array}
\usepackage{tabularray}
\usepackage{multirow} 
\usepackage{xcolor}
\usepackage{csquotes} 
\usepackage{hyperref}
\usepackage{cleveref} 
\usepackage{comment}
\usepackage[toc,page]{appendix}

\title{Two-point correlations of multiplicative functions with dense orbits}

\author[N. Tardy]{N\'eo Tardy}
\address{DER de math\'ematiques, ENS Paris-Saclay, 91190 Gif-sur-Yvette, France }
\email{neo.tardy@ens-paris-saclay.fr}

\date{}

\newtheorem{theoremalpha}{Theorem}

\newtheorem{theorem}{Theorem}[section]
\newtheorem{proposition}[theorem]{Proposition}
\newtheorem{lemma}[theorem]{Lemma}
\newtheorem{corollary}[theorem]{Corollary}
\newtheorem{conjecture}[theorem]{Conjecture}

\theoremstyle{definition}
\newtheorem{definition}[theorem]{Definition}

\theoremstyle{remark}
\newtheorem*{remark}{Remark}

\numberwithin{equation}{section}

\begin{document}

\maketitle

\begin{abstract}
    Let $f,g:\mathbb{N}\to\mathbb{T}$ be completely multiplicative functions with dense images in the complex unit circle $\mathbb{T}$. We prove that, for every non-empty open set $U \subset \mathbb{T}^2$, the set of integers $n$ such that $(f(n),g(n+1)) \in U$ has positive lower logarithmic density, unless the pair $(f,g)$ is of a special form. This result strengthens earlier theorems of Klurman and Mangerel, as well as of Charamaras, Mountakis, and Tsinas, and yields substantially simpler proofs.
\end{abstract}

\section{Introduction}

Denote by $\overline{\mathcal{M}}$ the set of completely multiplicative functions $f:\mathbb{N}\to\mathbb{T}$ such that the image $\{f(n)\}_n$ is dense in the unit circle $\mathbb{T}=\{z \in \mathbb{C} : |z|=1\}$. Let $f \in \overline{\mathcal{M}}$ and $n \ge 1$. Then, $f(n)$ is determined by its values at prime divisors of $n$ via the relation $f(n)=\prod_{p^\alpha||n} f(p)^\alpha$. Similarly, let $g \in \overline{\mathcal{M}}$, then $g(n+1)$ is determined by the values of $g(p)$ for all primes $p|n+1$. Since $n$ and $n+1$ are coprime, one might expect $f(n)$ and $g(n+1)$ to behave independently. In particular, since the sets $\{f(n)\}_n$ and $\{g(n+1)\}_n$ are dense in $\mathbb{T}$, one might expect the set $\{(f(n),g(n+1))\}_n$ to be dense in $\mathbb{T}^2$, which was the subject of a conjecture of Dar\'oczy and K\'atai \cite{daroczy1989characterization} in 1989. However, this heuristic fails in general, and one can construct explicit counterexamples.
\begin{itemize}
    \item Let $p$ be prime and $\alpha,\beta$ two irrational numbers. Define $f,g \in \overline{\mathcal{M}}$ by $f(p)=e(\alpha)$, $g(p)=e(\beta)$ and $f(q)=g(q)=1$ for all prime $q \neq p$. Then, for all integers $n \ge 1$, either $f(n)$ or $g(n+1)$ is equal to $1$. We denote the set of such pairs $(f,g)$ by $\mathcal{F}$.
    \item Let $t$ be a nonzero real number. Define $f,g \in \overline{\mathcal{M}}$ by $f(n)=g(n)=n^{it}$. Then, $g(n+1)=f(n)+o(1)$. We denote the set of such pairs $(f,g)$ by $\mathcal{G}$.
    \item Let $(f,g) \in \overline{\mathcal{M}}^2$ such that there exist  $K_1,K_2 \in \mathbb{N}$ such that $(f^{K_1},g^{K_2}) \in \mathcal{F} \cup \mathcal{G}$. By replacing $K_1$ and $K_2$ by $K_1 K_2$, we can assume that $K_1=K_2=K$. If $(f^K,g^K) \in \mathcal{F}$, then for all integers $n \ge 1$, either $f(n)$ or $g(n+1)$ is a $K$-th root of unity. If $(f^K,g^K) \in \mathcal{G}$, then for $n$ sufficiently large, the distance of $g(n+1)$ from the set $\{ f(n)e\left(\frac{m}{K}\right) : 0 \le m \le K-1 \}$ is at most $\frac{1}{3K}$. Consequently, in both cases, we have $\overline{\{(f(n),g(n+1))\}_n} \neq \mathbb{T}^2$.
\end{itemize}
The following theorem due to Klurman and Mangerel~\cite{klurman2018orbits} states that when the pair $(f,g) \in \overline{\mathcal{M}}^2$ is not one of the above examples, then $\overline{\{(f(n),g(n+1))\}_n} = \mathbb{T}^2$.

\begin{theoremalpha}\label{thm:A}
    Let $f,g \in \overline{\mathcal{M}}$ such that for all $K \ge 1$, $(f^K,g^K) \notin \mathcal{F}$. Assume that $\overline{\{(f(n),g(n+1))\}_n} \neq \mathbb{T}^2$, i.e. there exist $z,w\in\mathbb{T}$ and $\varepsilon>0$ such that
    \begin{equation}\label{thm:A-0}
        \forall x \ge 1, \mathcal{A}(x) = \emptyset,
    \end{equation}
    where
    $$ \mathcal{A}(x) = \mathcal{A}\left(z,w,\varepsilon;f,g;x\right) \vcentcolon= \left\{ n\le x : \left|f(n)-z\right|, \left|g(n+1)-w\right| \le \varepsilon \right\}. $$
    Then, there exist $K_1,K_2\in\mathbb{N}$ and a nonzero real number $T$ such that $f^{K_1}(n)=g^{K_2}(n)=n^{iT}$.
\end{theoremalpha}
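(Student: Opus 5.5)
We argue by contradiction with smooth test functions and Fourier analysis on $\mathbb{T}^2$. Fix a nonnegative trigonometric polynomial $\Phi$ on $\mathbb{T}^2$ that is supported near $(z,w)$ and has positive mean, for instance a product of Fejér-type kernels. Write
$$\Phi(u,v)=\sum_{|a|,|b|\le M}\hat\Phi(a,b)\,u^a v^b .$$
Hypothesis \eqref{thm:A-0} forces $\Phi(f(n),g(n+1))=0$ for every $n$. Averaging with logarithmic weights then gives
$$0=\hat\Phi(0,0)+\sum_{(a,b)\neq(0,0)}\hat\Phi(a,b)\,\frac{1}{\log x}\sum_{n\le x}\frac{f^a(n)\,g^b(n+1)}{n}.$$
Hence some nontrivial two-point correlation $\frac{1}{\log x}\sum_{n\le x} f^a(n)g^b(n+1)/n$ fails to tend to $0$ along a sequence of $x$.

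We split into cases according to whether $a$ or $b$ vanishes.

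\emph{Case $b=0$ or $a=0$.} Here the correlation is a one-point logarithmic mean, say of $f^a$. By Halász's theorem, a nonvanishing mean forces $f^a$ to be pretentious to $n^{it}$. Density of the image of $f$, together with the weight $\hat\Phi$, is used to rule out the degenerate subcase.

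\emph{Case $a,b\ne0$.} We invoke Tao's logarithmic two-point Elliott theorem. A non-negligible logarithmic correlation of $f^a(n)$ and $g^b(n+1)$ forces both $f^a$ and $g^b$ to be pretentious. That is, there exist a Dirichlet character $\chi$ and $t\in\mathbb{R}$ with $\mathbb{D}(f^a,\chi n^{it};\infty)<\infty$, and similarly for $g^b$ with $\chi'$ and $t'$. Since $\chi$ has finite order $k$, the function $F=f^{ak}$ pretends to be $n^{ikt}$, and likewise $G=g^{bk'}$ pretends to be $n^{ik't'}$.

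It then remains to upgrade pretentiousness to exact equality $F(n)=n^{iT}$. Write $F(n)=n^{iT}h(n)$ with $\sum_p(1-\mathrm{Re}\,h(p))/p<\infty$. If $h\not\equiv1$, we exploit the fact that $h$ is very close to $1$ on typical primes while taking values away from $1$ at some primes. We feed this into a refined version of the Fourier argument, replacing $\Phi$ by a test function adapted to the lattice generated by $(ak,bk')$. The two-point correlation with the pretentious main terms can then be computed explicitly: it factors as an Euler product times the Archimedean part $\frac{1}{\log x}\sum_{n\le x} n^{iT}(n+1)^{iT'}/n$, which behaves like $\frac{1}{\log x}\sum_{n\le x} n^{i(T+T')}/n$. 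This explicit computation shows that the relevant correlations vanish unless $T+T'=0$ after normalization, so $f^{K_1}$ and $g^{K_2}$ both essentially equal $n^{iT}$. It also shows that any genuine deviation of $h$ at a prime $p$ (i.e.\ $h(p)\ne1$) produces orbit points filling the missing neighbourhood. Concretely, multiplying $n$ by $p^j$ shifts $f$ by $h(p)^j$ while preserving the residue structure, which contradicts \eqref{thm:A-0}. The exclusion of $\mathcal{F}$ is used exactly here: the only way a deviation at a single prime can fail to produce density is the configuration of $\mathcal{F}$.

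\textbf{Main obstacle.} The hardest step is this final rigidity step: passing from ``$f^{K}$ pretends to be $n^{iT}$'' to the exact identity $f^{K_1}(n)=n^{iT}$, while also forcing $T\ne0$ and the same $T$ for $g$. The difficulty is that the Euler-product factor from the pretentious correlation could in principle vanish through cancellation. I would first try to prove nonvanishing prime by prime, showing that each local factor $\sum_{j} h(p^j)p^{-j}(1-1/p)$ is nonzero unless $h(p)$ is a root of unity. Density of the image of $f$ would then exclude $T=0$.
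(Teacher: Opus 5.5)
Your opening Fourier step is essentially the one in the paper's proof of Theorem~\ref{thm:redpret}. One technical fix is needed: take $\Phi$ smooth and compactly supported and truncate its Fourier series, since a nonnegative trigonometric polynomial cannot vanish off a small set.

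The case analysis that follows has a hole. When the surviving coefficient has $b=0$, you only learn that $f^a$ is pretentious and nothing about $g$. Yet your final step assumes both $f$ and $g$ are pseudo-pretentious. The mixed case ($f$ pseudo-pretentious, $g$ not) is not contradictory on its face and needs a genuine argument. In the paper this is Proposition~\ref{prop:only1pret}. It restricts to $n_\ell=W_2W_1(W_1\ell+1)$, where the concentration inequality gives $f(n_\ell)\approx z\,h(W_1\ell+1)\ell^{it}$. It then kills the finite-order twist $h^b$ ($k\nmid b$) via Lemma~\ref{lem:nonpret} and Tao's theorem, and kills $g^c$ via Hal\'asz in progressions. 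Also, Tao's theorem only says that at each scale $x$ the function is close to some $\chi_x n^{it_x}$ with $|t_x|\le Ax$. Upgrading this to $\mathbb{D}(\cdot,\chi n^{it};\infty)<\infty$ needs an extra argument, which the paper imports as Lemma~\ref{lem:impliespret}; it yields only pseudo-pretentiousness.

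The more serious gap is the rigidity step, which carries all the content, and the mechanism you sketch would not work. Knowing which correlations vanish, or that some Euler factor is nonzero, cannot give a contradiction. You must show that $\sum_{a,b}\hat\Phi(a,b)\,\frac{1}{\log x}\sum_{n\le x}f^a(n)g^b(n+1)/n$ stays bounded below, which is a statement about the joint distribution of $(f(n),g(n+1))$. Nonvanishing correlations are perfectly compatible with the excluded pair $f=g=n^{it}$, where the $(a,-a)$ correlations tend to $1$.

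Your local criterion is vacuous: $\sum_{j\ge0}h(p)^jp^{-j}(1-1/p)=(1-1/p)/(1-h(p)/p)\neq0$ for every $h(p)\in\mathbb{T}$. Moreover, the pretentious main terms carry a phase $\exp\bigl(i\sum_{y<p\le x}\mathfrak{I}(\cdot)/p\bigr)$ that need not converge. Finally, ``multiply $n$ by $p^j$ to rotate $f$'' ignores that $g(p^jn+1)$ bears no relation to $g(n+1)$.

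The missing idea is to control both coordinates simultaneously on a $W$-tricked progression. Take $n_\ell=W(2W\ell+r)$ with $W=q_1q_2\prod_{p\le y}p^{\alpha_p}$. Lemma~\ref{prop:concentration} then gives $f^{k_1}(n_\ell)\approx u^{k_1}\ell^{it_1k_1}$ and $g^{k_2}(n_\ell+1)\approx v^{k_2}\ell^{it_2k_2}$ for all $\ell$ outside a set of logarithmic density $O(\varepsilon^3)$, after passing to a subsequence $Y_j$ to freeze the phases.
\begin{itemize}
\item The rotation $u$ is tunable through $\alpha_p$ for a prime where $f(p)p^{-it_1}$ has irrational angle. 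This is exactly where the failure of the conclusion is used when $s_1t_1=s_2t_2\neq0$.
\item The twists $h_1,h_2$ are removed by Lemma~\ref{lem:nonpret} together with Tao's theorem.
\item Positivity then comes from the logarithmic distribution of $\ell^{it}$ and of $(\ell^{it_1},\ell^{it_2})$ (Lemmas~\ref{lem:logsinglenit} and~\ref{lem:logdoublenit}).
\item When $t_1=t_2=0$, one instead places $p_1^{m_1}$ in $n$ and $p_2^{m_2}$ in $n+1$ for two distinct primes with irrational angles. That, rather than a single-prime deviation, is where $(f^K,g^K)\notin\mathcal{F}$ enters.
\end{itemize}
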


Note that \eqref{thm:A-0} may be written as $d_{\mathrm{top}}(\mathcal{A})=0$, where
$$d_{\mathrm{top}}(\mathcal{A}) := \mathbb{1}_{\mathcal{A}(\infty)\neq\emptyset}.$$
We prove a quantitative refinement of Theorem~\ref{thm:A} by replacing the topological density $d_{\mathrm{top}}(\mathcal{A})$ of $\mathcal{A}$ by the \textit{lower logarithmic density}
$$\underline{d}_{\mathrm{log}}(\mathcal{A}) := \liminf\limits_{x\to+\infty}{\frac{1}{\log{x}}\sum_{n\in\mathcal{A}(x)} \frac{1}{n}},$$
which is sensitive to the analytic structure of $\mathcal{A}$.

\begin{theorem}\label{thm:main}
    The statement of Theorem~\ref{thm:A} holds when one replaces \eqref{thm:A-0} by
    \begin{equation}\label{thm:main-0}
        \underline{d}_{\mathrm{log}}(\mathcal{A})=0.
    \end{equation}
\end{theorem}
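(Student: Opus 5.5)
Approximating the indicator of $\mathcal{A}$ by trigonometric polynomials turns the density statement into one about two-point logarithmic correlations, and Tao's logarithmic two-point Elliott theorem then forces pretentiousness. The argument is by contrapositive.

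\textbf{Reduction to correlations.} Choose nonnegative smooth bump functions $\phi,\psi$ on $\mathbb{T}$, supported in the $\varepsilon$-arcs around $z$ and $w$, with positive mean, and expand them in Fourier series. Then
$$\frac{1}{\log x}\sum_{n\le x}\frac{\phi(f(n))\psi(g(n+1))}{n}=\sum_{a,b\in\mathbb{Z}}\hat\phi(a)\hat\psi(b)\,\frac{1}{\log x}\sum_{n\le x}\frac{f^a(n)g^b(n+1)}{n},$$
and the left-hand side bounds the logarithmic counting function of $\mathcal{A}$ from below, up to a constant. By rapid Fourier decay we may truncate to $|a|,|b|\le H$. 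The term $(a,b)=(0,0)$ contributes $\hat\phi(0)\hat\psi(0)>0$.

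\textbf{Cases $ab=0$.} Take $a\neq0$, $b=0$. The term is the logarithmic mean of $f^a$. By Hal\'asz's theorem in logarithmic form (or directly), this mean is $o(1)$ unless $f^a$ pretends to be $1$. The logarithmic mean of a multiplicative function is insensitive to $n^{it}$ twists, since $\sum n^{-1+it}$ is bounded. So the term is $o(1)$ unless $\sum_p(1-\mathrm{Re}\,f^a(p))/p<\infty$. The case $a=0$, $b\neq0$ is symmetric in $g$.

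\textbf{Cases $ab\ne0$.} If \eqref{thm:main-0} holds, then along a sequence $x_j\to\infty$ the sum of the nonzero modes must cancel $-\hat\phi(0)\hat\psi(0)$. In particular, some fixed pair $(a,b)\ne(0,0)$ with $|a|,|b|\le H$ has $\limsup_j$ of its normalized correlation bounded away from $0$. By Tao's two-point logarithmic Elliott theorem, a nonvanishing logarithmic correlation $\sum f^a(n)g^b(n+1)/n$ forces both $f^a$ and $g^b$ to be pretentious: there are $t$ and Dirichlet characters $\chi_1,\chi_2$ with $f^a\sim\chi_1 n^{it}$ and $g^b\sim\chi_2 n^{-it}$. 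Raising to the power of the moduli kills the characters, which gives $K$ with $f^{K}\sim n^{iT}$ and $g^{K}\sim n^{iT'}$ for suitable $T,T'$.

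\textbf{Upgrade to exact equality.} This is the step I expect to be the main obstacle: passing from pretentiousness to the exact conclusion $f^{K_1}(n)=g^{K_2}(n)=n^{iT}$ with $T\neq0$, and also excluding $\mathcal{F}$. Write $f^K=n^{iT}h_1$ and $g^K=n^{iT'}h_2$ with $h_1,h_2$ pretending to be $1$. The original set $\mathcal{A}$ still has zero density. I would rerun the Fourier argument on refined events, conditioning on the values of $h_1,h_2$ at small primes. This uses the structure of pretentious functions: $h_1(n)$ is essentially determined by the small prime factors of $n$, as a concentration or Kubilius model.

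If some prime $p$ has $h_1(p)\ne1$, choosing $n$ in suitable residue classes (divisible by a power of $p$, with $n+1$ avoiding bad primes) lets us steer $f(n)$ independently of $g(n+1)$. Density of the images of $f,g$ then produces a positive log-density of solutions, contradicting \eqref{thm:main-0}. The only exception is when all deviations sit on a single prime for $f$ and $g$ alike, which is exactly the excluded family $\mathcal{F}$.

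Once $h_1\equiv h_2\equiv1$, the relation $(n+1)^{iT'}\approx n^{iT'}$ leaves the pair $(f(n),g(n+1))$ near a curve. Zero density then forces $T=T'$; otherwise the two arguments decouple on logarithmic scales. Finally, $T\ne0$ follows from density of the image of $f$.
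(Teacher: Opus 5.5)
\textbf{The gap.} It is the step ``by Tao's two-point logarithmic Elliott theorem, a nonvanishing logarithmic correlation forces both $f^a$ and $g^b$ to be pretentious.'' Tao's theorem (Lemma~\ref{lem:logtao}) is a single-scale statement. A correlation of size $\ge\delta$ at scale $x$ only shows that $g^b$ is not $(A,x)$-non-pretentious, i.e.\ $\mathbb{D}(g^b,\chi_x n^{it_x};x)<A$ for some $\chi_x$ of period $\le A$ and some $|t_x|\le Ax$ that depends on $x$. To reach $\mathbb{D}(g^b,\chi n^{it};\infty)<\infty$ you must show that $t_x$ stabilises. That requires the correlation to be large at \emph{every} large $x$, which is what Lemma~\ref{lem:impliespret} packages. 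Hypothesis \eqref{thm:main-0} is a $\liminf$, so it only gives you a sparse sequence $x_j$. Along that sequence $t_{x_j}$ can drift to infinity while $g^b$ is genuinely non-pretentious.

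\textbf{Why it cannot be repaired.} The statement as written appears to be false, and the counterexample is the function of Section~\ref{sec:counterex}. Take $g=f$ from that section, with $z=-1$, $w=1$, $\varepsilon=\frac16$.
\begin{enumerate}
\item The computation there gives $f(n)\overline{f(n+1)}=1+O(1/s_{m+1}+1/t_m)$ for all $n\in[s_{m+1}^2,t_{m+1}-1]$ outside a set $E_m\subset[1,t_{m+1}]$. Redoing the paper's count of $n$ with $p^\ell\mid n(n+1)$, $p\le t_m$, $\ell>10\log t_m$, with weights $1/n$, gives $\sum_{n\in E_m}\frac1n=o(\log t_{m+1})$.
\item Membership $n\in\mathcal{A}$ forces $|f(n)-f(n+1)|\ge\frac53$. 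Hence $\mathcal{A}(t_{m+1})\subset[1,s_{m+1}^2)\cup E_m\cup\{t_{m+1}\}$.
\item Since $\log t_{m+1}=s_{m+1}$,
\[
\frac{1}{\log t_{m+1}}\sum_{n\in\mathcal{A}(t_{m+1})}\frac1n\le\frac{2\log s_{m+1}+1}{s_{m+1}}+o(1)\longrightarrow 0 .
\]
\item So $\underline{d}_{\log}(\mathcal{A})=0$ and $(f^K,f^K)\notin\mathcal{F}$ for all $K$. Yet no power of $f$ equals $n^{iT}$: $f^K(p)=p^{iT}$ for all $p$ would give $p^{i(Ks_{m+1}-T)}=1$ for all primes $p\in(t_m,t_{m+1}]$, forcing $T=Ks_{m+1}$ for every large $m$.
\end{enumerate}
The paper's remark that this construction does not extend to logarithmic averages is therefore mistaken. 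The interval $[s_{m+1}^2,t_{m+1}]$ carries all but a proportion $O(\log s_{m+1}/s_{m+1})$ of the logarithmic mass of $[1,t_{m+1}]$.

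\textbf{Where the paper breaks.} The paper's proof fails at the same point as yours. In Theorem~\ref{thm:redpret}, a fixed correlation is only known to be large at infinitely many points of the arbitrary infinite set $\mathcal{N}$, not on a set of positive lower density. So the passage to ``all $x$ sufficiently large'', which Lemma~\ref{lem:impliespret} requires, is unjustified. Your Fourier reduction, and an upgrade along the lines of the rotation trick in Propositions~\ref{prop:proofA} and~\ref{prop:proofB}, need a hypothesis that controls all large scales, such as $\mathcal{A}=\emptyset$ in Theorem~\ref{thm:A}.
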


\begin{remark}
    Our proof of Theorem~\ref{thm:main} significantly simplifies the proof of Theorem~\ref{thm:A} in \cite{klurman2018orbits} by avoiding the use of Szemerédi's theorem. Instead, we rely on elementary techniques involving only basic arithmetic.
\end{remark}

\begin{remark}
    The statement of Theorem~\ref{thm:main} remains valid when one considers the pair $(f(an+b),g(cn+d))$, where $a,b \ge 1$ and $c,d \ge 0$ are integers satisfying $ad-bc \neq 0$. Indeed, the proof requires only a minor modification of the parameters, since the arithmetic structure of $a,b,c,d$ affects only finitely many prime powers. More precisely, our definition of $\mathcal{F}$ suggests that, compared with the setting of Theorem~\ref{thm:main}, the only additional sources of rotation on $\mathbb{T}^2$ arise from the divisors of $ad-bc$. From this observation, it is not difficult to see that Theorem~\ref{thm:main} implies the result of Charamaras, Mountakis, and Tsinas \cite{charamaras2025multiplicative}, which characterizes the integers $a,b,c,d$ such that, for every completely multiplicative function $f:\mathbb{N}\to\mathbb{T}$,
    $$ \liminf_{n\to+\infty} |f(an+b)-f(cn+d)| = 0. $$
    This in turn generalizes earlier results of Klurman and Mangerel \cite{klurman2018rigidity}, corresponding to $a=c=d=1$ and $b=0$, and of Donoso, Le, Moreira, and Sun \cite[Corollary 1.7]{donoso2023additive}, corresponding to $a=c$ and $b=0$. Using ergodic methods, Leung and T\'{a}fula \cite[Theorem 1.2]{leung2024multiplicative} recently extended these results to arbitrary finite family of completely multiplicative functions.
\end{remark}

As an immediate corollary of Theorem~\ref{thm:main}, we prove the following conjecture of De Koninck, K\'atai and Phong \cite{de2021variations}, stated with the \textit{logarithmic density}
$$ d_{\mathrm{log}}(\mathcal{A}) := \lim\limits_{x\to+\infty}{\frac{1}{\log{x}}\sum_{n\in\mathcal{A}(x)} \frac{1}{n}}. $$

\begin{conjecture}\label{conj:DKP}
    The statement of Theorem~\ref{thm:A} holds when one replaces \eqref{thm:A-0} by
    \begin{equation*}
        d_{\mathrm{log}}(\mathcal{A})=0.
    \end{equation*}
\end{conjecture}

\begin{corollary}\label{cor:DKP}
    Conjecture~\ref{conj:DKP} is true.
\end{corollary}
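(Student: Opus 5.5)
The corollary follows from Theorem~\ref{thm:main} by comparing the two densities. Fix $f,g\in\overline{\mathcal{M}}$ with $(f^K,g^K)\notin\mathcal{F}$ for all $K\ge 1$. Fix $z,w\in\mathbb{T}$ and $\varepsilon>0$ such that $d_{\mathrm{log}}(\mathcal{A})=0$. We must produce $K_1,K_2\in\mathbb{N}$ and a nonzero real $T$ with $f^{K_1}(n)=g^{K_2}(n)=n^{iT}$. By Theorem~\ref{thm:main}, it suffices to check that hypothesis \eqref{thm:main-0} holds, namely $\underline{d}_{\mathrm{log}}(\mathcal{A})=0$.

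Saying that $d_{\mathrm{log}}(\mathcal{A})=0$ means that the limit
$$\lim_{x\to+\infty}\frac{1}{\log x}\sum_{n\in\mathcal{A}(x)}\frac1n$$
exists and equals $0$. For any real sequence whose limit exists, the limit inferior equals the limit. Hence
$$\underline{d}_{\mathrm{log}}(\mathcal{A})=\liminf_{x\to+\infty}\frac{1}{\log x}\sum_{n\in\mathcal{A}(x)}\frac1n=0 .$$
Alternatively, note that the quantities are nonnegative, so $0\le\liminf\le\limsup=0$.

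Therefore \eqref{thm:main-0} holds, and Theorem~\ref{thm:main} gives exactly the conclusion of Theorem~\ref{thm:A}. This is the statement of Conjecture~\ref{conj:DKP}. There is no real obstacle: the corollary is strictly weaker than Theorem~\ref{thm:main}, because the hypothesis $d_{\mathrm{log}}(\mathcal{A})=0$ already assumes the limit exists. All of the substance lies in proving Theorem~\ref{thm:main} itself.
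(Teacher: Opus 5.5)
Your proposal is correct and is exactly the paper's argument: the paper states the corollary as immediate from Theorem~\ref{thm:main}. Indeed, $d_{\mathrm{log}}(\mathcal{A})=0$ means the limit exists and equals $0$, so $\underline{d}_{\mathrm{log}}(\mathcal{A})=0$ and Theorem~\ref{thm:main} applies.
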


It is easy to see that Theorem~\ref{thm:main} implies Theorem~\ref{thm:A}, since $\underline{d}_{\mathrm{log}} \le d_{\mathrm{top}}$. Moreover, let us justify the choice of the lower logarithmic density in the statement of Theorem~\ref{thm:main} rather than another more natural analytic density. For instance, one may ask what happens when one considers the \textit{lower natural density}
$$ \underline{d}(\mathcal{A}) := \liminf\limits_{x\to+\infty}{\frac{1}{x}\sum_{n\in\mathcal{A}(x)} 1}, $$
or the \textit{natural density}
$$ d(\mathcal{A}) := \lim\limits_{x\to+\infty}{\frac{1}{x}\sum_{n\in\mathcal{A}(x)} 1}. $$
It is also interesting to consider more sensitive densities such as the \textit{lower double-logarithmic density}
$$\underline{d}_{\mathrm{log log}}(\mathcal{A}) := \liminf\limits_{x\to+\infty}{\frac{1}{\log\log{x}}\sum_{n\in\mathcal{A}(x)} \frac{1}{n\log n}},$$
or the \textit{double-logarithmic density}
$$d_{\mathrm{log log}}(\mathcal{A}) := \lim\limits_{x\to+\infty}{\frac{1}{\log\log{x}}\sum_{n\in\mathcal{A}(x)} \frac{1}{n\log n}}.$$

\begin{corollary}\label{cor:otherdensities}
    The statement of Theorem~\ref{thm:A} holds when one replaces \eqref{thm:A-0} by one of the following:
    \begin{enumerate}
        \item  $d(\mathcal{A})=0$ ;
        \item $\underline{d}_{\mathrm{log log}}(\mathcal{A})=0$ ;
        \item $d_{\mathrm{log log}}(\mathcal{A})=0$.
    \end{enumerate}
\end{corollary}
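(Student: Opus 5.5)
The plan is to deduce all three cases from Theorem~\ref{thm:main} by comparing densities. In each case it suffices to show that the hypothesis forces $\underline{d}_{\mathrm{log}}(\mathcal{A})=0$, since then Theorem~\ref{thm:main} gives the conclusion directly. Equivalently, by contraposition: if $\underline{d}_{\mathrm{log}}(\mathcal{A})>0$, then none of $d(\mathcal{A})=0$, $\underline{d}_{\mathrm{log log}}(\mathcal{A})=0$, $d_{\mathrm{log log}}(\mathcal{A})=0$ can hold.

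For (1), I will use the classical chain
$$\underline{d}(\mathcal{A})\le \underline{d}_{\mathrm{log}}(\mathcal{A})\le \overline{d}_{\mathrm{log}}(\mathcal{A})\le \overline{d}(\mathcal{A}),$$
which follows from partial summation. Writing $A(t)=\#\mathcal{A}(t)$, we have
$$\sum_{n\in\mathcal{A}(x)}\frac1n=\frac{A(x)}{x}+\int_1^x\frac{A(t)}{t^2}\,dt.$$
If $d(\mathcal{A})=0$, then $A(t)=o(t)$, so the integral is $o(\log x)$, and hence $d_{\mathrm{log}}(\mathcal{A})=0$. In particular $\underline{d}_{\mathrm{log}}(\mathcal{A})=0$, and Theorem~\ref{thm:main} applies. (This is the same mechanism behind Corollary~\ref{cor:DKP}.)

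For (2) and (3), case (3) is a special case of (2): if the double-logarithmic density exists and equals $0$, then so does its $\liminf$. So it suffices to prove $\underline{d}_{\mathrm{log log}}(\mathcal{A})\le \underline{d}_{\mathrm{log}}(\mathcal{A})$. Put
$$L(t)=\sum_{n\in\mathcal{A}(t)}\frac1n,$$
and suppose $\underline{d}_{\mathrm{log}}(\mathcal{A})=\delta>0$. Then $L(t)\ge(\delta-\eta)\log t$ for all $t\ge t_0$, for any fixed $\eta>0$. Partial summation with weight $1/\log n$ gives
$$\sum_{n\in\mathcal{A}(x)}\frac{1}{n\log n}=\frac{L(x)}{\log x}+\int_{2}^{x}\frac{L(t)}{t(\log t)^2}\,dt \ge (\delta-\eta)\log\log x - O_\eta(1).$$
Dividing by $\log\log x$ yields $\underline{d}_{\mathrm{log log}}(\mathcal{A})\ge\delta>0$. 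By contraposition, $\underline{d}_{\mathrm{log log}}(\mathcal{A})=0$ forces $\underline{d}_{\mathrm{log}}(\mathcal{A})=0$, and Theorem~\ref{thm:main} concludes.

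There is no real obstacle. The only care needed is in the partial summation: the boundary terms at $n=1$ and small $n$ must be handled, for instance by starting the sums at $n=2$ and absorbing the initial segment $t<t_0$ into the $O_\eta(1)$ term. One should also note that the lower bound in (2) only uses the $\liminf$ of $L(t)/\log t$, which is exactly why the weaker hypothesis $\underline{d}_{\mathrm{log}}>0$ suffices.
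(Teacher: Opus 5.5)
Your proposal is correct and follows the paper's proof. For (1) you show by partial summation that $d(\mathcal{A})=0$ forces $d_{\mathrm{log}}(\mathcal{A})=0$; the paper instead cites Corollary~\ref{cor:DKP}, which is itself immediate from Theorem~\ref{thm:main}. For (2) and (3) you prove $\underline{d}_{\mathrm{log log}}(\mathcal{A})\ge\underline{d}_{\mathrm{log}}(\mathcal{A})$ by partial summation and apply Theorem~\ref{thm:main}, exactly as the paper does (your Abel summation omits the boundary term $-L(1)/\log 2$, but it is $O(1)$ and harmless).
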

\begin{proof}
    \begin{enumerate}
        \item If $d(\mathcal{A})=0$, then $d_{\mathrm{log}}(\mathcal{A})$ exists and is equal to $0$, hence we can apply Corollary~\ref{cor:DKP}.
        \item \label{itm:cor-1} It follows by partial summation that $\underline{d}_{\mathrm{log log}}(\mathcal{A}) \ge \underline{d}_{\mathrm{log}}(\mathcal{A})$, hence we can apply Theorem~\ref{thm:main}.
        \item This follows from (2).
    \end{enumerate}
\end{proof}

If one replaces \eqref{thm:A-0} by $\underline{d}(\mathcal{A})=0$, it appears that the conclusion of Theorem~\ref{thm:A} fails, as we prove in the following proposition.

\begin{proposition}\label{prop:counterex}
    There exists $f \in \overline{\mathcal{M}}$ such that for all $K \in \mathbb{N}$, $(f^K,f^K) \notin \mathcal{F} \cup \mathcal{G}$, and
    $$ \liminf\limits_{x\to+\infty} \frac{1}{x} \sum_{\substack{n \le x \\ |f(n)\overline{f(n+1)}+1| \le \frac{1}{3}}} 1 = 0. $$
\end{proposition}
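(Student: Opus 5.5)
The plan is to build $f$ so that, along a very sparse sequence of scales $x_k$, it looks like $n^{it_k}$ on almost all $n\le x_k$, with $t_k$ converging to a nonzero limit. Then $f(n)\overline{f(n+1)}$ is close to $1$ for most $n \le x_k$, and in particular far from $-1$. Because the $t_k$ are pairwise distinct, no power of $f$ equals $n^{iT}$ exactly.

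\textbf{Construction.} Choose scales $y_0=1<y_1<y_2<\cdots$ and nonzero reals $t_1,t_2,\dots$ inductively, and set $f(p)=p^{it_k}$ for primes $p\in(y_{k-1},y_k]$, extended completely multiplicatively. At stage $k$, the scale $y_{k-1}$ is already fixed, and we proceed in three steps.
\begin{enumerate}
\item Since $\sum_{m\ \text{$y_{k-1}$-smooth}} 1/m=\prod_{p\le y_{k-1}}(1-1/p)^{-1}<\infty$, there is $A_k$ such that the set of $n$ whose $y_{k-1}$-smooth part exceeds $A_k$ has upper density at most $1/k$. The same then holds for $n+1$.
\item Pick $t_k\neq t_{k-1}$ with $|t_k-t_{k-1}|\le \delta_k:=2^{-k}/(100\log A_k)$. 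We also require $A_k$ to be increasing, so that for all $j<k$ and all $a\le A_k$,
$$ |t_j-t_k|\log a\le \sum_{m\ge j+1}\delta_m \log A_m \cdot \tfrac{\log A_k}{\log A_m}\le \tfrac1{50}. $$
Here one has to check the indexing carefully. The essential point is that every later increment $\delta_m$, $m>j$, is small compared with $1/\log A_k$ whenever $m\ge k$, and the finitely many earlier increments are controlled by $1/\log A_m$ with $A_m\le A_k$. If this bookkeeping fails, I would instead shrink $\delta_m$ so that $\sum_{m>j}\delta_m\le 1/(100\log A_{j+1})$ and require $\log A$ to grow slowly relative to that, or choose all later $t_m$ inside an interval of length $1/(100\log A_k)$ around $t_k$.
\item Finally take $y_k=x_k$ so large that the density estimate of step 1 is effective up to $x_k$ with error $2/k$.
\end{enumerate}

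\textbf{Main estimate.} Fix $k$ and consider $n\le x_k$ outside the exceptional sets. Write $n=ab$ with $a$ the $y_{k-1}$-smooth part, so $a\le A_k$, and every prime of $b$ lies in $(y_{k-1},y_k]$. Then $f(n)=f(a)\,b^{it_k}=n^{it_k}\,f(a)a^{-it_k}$, and $f(a)a^{-it_k}=\prod_{p^\alpha\|a}p^{i\alpha(t_j-t_k)}$ is within $O(1/50)$ of $1$. Indeed, the exponents sum to $\sum \alpha (t_{j(p)}-t_k)\log p$, which is bounded using $\log a\le \log A_k$ and $\sup_{j<k}|t_j-t_k|\le \sum_{m\ge 1}\delta_m$ paired against $\log A_k$. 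This is exactly where the choice in step 2 is needed. The same bound holds for $n+1$. Since $(n+1)^{it_k}n^{-it_k}=1+O(|t_k|/n)$, we get $|f(n)\overline{f(n+1)}-1|\le 1/10$ for all but at most $(3/k)x_k+O(1)$ integers $n\le x_k$. On such $n$ we have $|f(n)\overline{f(n+1)}+1|\ge 2-1/10>1/3$, so the counting function in the statement is at most $3/k+o(1)$ at $x=x_k$, and the $\liminf$ is $0$.

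\textbf{Remaining properties.} Density of the image already follows from the primes in $(1,y_1]$ together with later blocks: $\{p^{it}\}$ over large primes is dense for $t\neq 0$, and $\{f(n)\}$ is a group generated by these values. To be safe, take $y_1$ large, since $\{p^{it_1}:p\le y_1\}$ is a group-generating set whose closure is $\mathbb{T}$ once it contains a non-torsion element. Next, $(f^K,f^K)\notin\mathcal{F}$ because $f(p)\neq 1$ for infinitely many primes $p$ (for all but finitely many $p$, $p^{iKt_k}\neq 1$). Finally, $(f^K,f^K)\notin\mathcal{G}$ because $f^K(p)=p^{iKt_k}$ with $t_k$ taking at least two distinct values on infinitely many primes, which is incompatible with $f^K(n)=n^{iT}$ for a single $T$. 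I expect the main obstacle to be the uniform control in step 2: ensuring that the cumulative drift of $t_j$ against $\log A_k$ stays small for all earlier blocks simultaneously, which dictates the order of choices $y_{k-1}\to A_k\to t_k\to y_k$.
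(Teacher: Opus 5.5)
Your proposal has a genuine gap at step 2, and it is not a matter of indexing. Step 2 asks for $|t_j-t_k|\log a\le \frac1{50}$ for all $j<k$ and all $a\le A_k$, i.e. $\max_{j<k}|t_j-t_k|\le 1/(50\log A_k)$. But step 1 forces $A_k\to\infty$. For fixed $A$, an integer whose $y$-smooth part is at most $A$ has no prime factor in $(A,y]$, and such integers have density at most $\prod_{A<p\le y}(1-1/p)\to0$ as $y\to\infty$. Hence $t_k\to t_j$ for every fixed $j$, so all the $t_j$ coincide. This contradicts $t_k\neq t_{k-1}$, and it would even put $(f,f)$ in $\mathcal{G}$. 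Your displayed chain breaks exactly at the earlier increments: for $m<k$ the ratio $\log A_k/\log A_m$ is large, and for instance $\delta_2\log A_k\to\infty$.

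None of your fallbacks can repair this, and neither can restricting to typical $n$ rather than all $a\le A_k$. Suppose the $t_k$ are pairwise distinct and converge to $t_\infty$. Then $c:=t_j-t_\infty\neq0$ for $j=1$ or $j=2$. Pick a prime $p$ in block $j$ with $c\log p/2\pi\notin\mathbb{Q}$. By the Chinese remainder theorem, $d:=v_p(n)-v_p(n+1)$ is asymptotically independent of the rest of the phase of $f(n)\overline{f(n+1)}$, and this rest contributes a factor $p^{i(t_j-t_k)d}$. Once $D=D(c,p)$ is large and $k$ is large, every $u\in\mathbb{T}$ satisfies $|u\,p^{i(t_j-t_k)d}+1|\le 1/3$ for some $|d|\le D$. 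Pigeonholing over $|d|\le D$ then gives a set of $n$ of density $\gg p^{-D}$, uniformly in $k$, on which $|f(n)\overline{f(n+1)}+1|\le1/3$. So with convergent exponents the $\liminf$ stays positive.

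Your overall shape is right: make $f$ look like $n^{it}$ on most of $[1,x_k]$ after discarding integers with a large small-prime part. The missing idea is to match phases modulo $2\pi$ instead of making the exponents close as real numbers. In the construction the paper uses (due to Klurman, Mangerel and Ter\"av\"ainen), Kronecker's theorem is applied to the $\mathbb{Q}$-linearly independent numbers $\log p$, $p\le t_m$. It yields $s_{m+1}>e^{t_m}$ with $|f(p)-p^{is_{m+1}}|\le t_m^{-2}$ for \emph{all} primes $p\le t_m$ simultaneously. One then sets $f(p)=p^{is_{m+1}}$ for $t_m<p\le t_{m+1}:=e^{s_{m+1}}$.

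Then $f(n)=n^{is_{m+1}}(1+O(1/t_m))$ for every $n\le t_{m+1}$ whose exponents at primes $p\le t_m$ are at most $10\log t_m$, which excludes only $o(t_{m+1})$ integers. Because $s_{m+1}$ is huge, you also need $n\ge s_{m+1}^2$ to get $(1+1/n)^{is_{m+1}}=1+O(1/s_{m+1})$. This discards only a proportion $s_{m+1}^2/t_{m+1}\to0$ of $[1,t_{m+1}]$. So the exponents must tend to infinity, and the order of choices is $t_m\to s_{m+1}\to t_{m+1}$, with $t_{m+1}$ exponentially large in $s_{m+1}$.
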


By Proposition~\ref{prop:counterex}, Theorem~\ref{thm:main} is false when one replaces \eqref{thm:main-0} by
$$ \underline{d}(\mathcal{A})=0. $$
Thus, the use of the lower logarithmic density in the statement of Theorem~\ref{thm:main} is motivated by the fact that it provides the strongest version of Theorem~\ref{thm:A} among the standard analytic densities.

Finally, we prove two corollaries of our main theorem. The first corollary answers positively a conjecture of De Koninck, K\'atai and Phong \cite[Conjecture 4]{de2019some} in the case of completely multiplicative functions, and states that there is no example of $f$ in Proposition~\ref{prop:counterex} for which the limit inferior is a limit.

\begin{corollary}\label{cor:DKP2}
    Let $f:\mathbb{N}\to\mathbb{T}$ be a completely multiplicative function such that there exist some $w \in \mathbb{T}$ and some $\varepsilon>0$ such that
    $$ \liminf\limits_{x\to+\infty} \frac{1}{\log x} \sum_{\substack{n \le x \\ |f(n)\overline{f(n+1)}-w| \le \varepsilon}} \frac{1}{n} = 0. $$
    Then, there exist $K \ge 1$ and $t \in \mathbb{R}$ such that $f^K(n)=n^{it}$ for all $n \in \mathbb{N}$.
\end{corollary}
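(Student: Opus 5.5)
We derive Corollary~\ref{cor:DKP2} from Theorem~\ref{thm:main} applied with $g=f$, after reducing to the case $f\in\overline{\mathcal{M}}$. The reduction runs as follows.

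\emph{Case 1: the image of $f$ is not dense.} Then the image $\{f(n)\}_n$, being a subgroup-generated subset of $\mathbb{T}$ (the values $f(p)$ generate a subgroup containing the image, and the image contains all products of the $f(p)$), cannot contain an element of infinite order. An element of infinite order would have dense powers $f(p)^k=f(p^k)$. Hence every $f(p)$ is a root of unity. If their orders were unbounded, the group they generate would be infinite and hence dense. Moreover, every element of that group is a product of values $f(p_i)^{a_i}$ with $a_i\ge 0$ (since $f(p)^{-1}=f(p)^{\mathrm{ord}-1}$), so it lies in the image, and the image would be dense. So the orders are bounded, and some $K$ gives $f^K\equiv 1=n^{i0}$. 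The conclusion then holds with $t=0$.

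\emph{Case 2: $f\in\overline{\mathcal{M}}$.} Suppose the hypothesis holds with $w,\varepsilon$. Pick a point $z\in\mathbb{T}$ and set $w'=z w$ and $\varepsilon'=\varepsilon/3$. If $|f(n)-z|\le\varepsilon'$ and $|f(n+1)-w'|\le\varepsilon'$, then
$$|f(n)\overline{f(n+1)}-\overline{w}|\le |f(n)-z|+|\overline{f(n+1)}-\overline{w'}|\le 2\varepsilon/3.$$
This gives an inclusion of sets, but with $\overline{w}$ in place of $w$. I therefore choose $w'=z\overline{w}$ instead, so that the relevant quantity $f(n)\overline{f(n+1)}$ lands near $w$. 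With this choice, $\mathcal{A}(z,w',\varepsilon';f,f;x)$ is contained in the set appearing in the hypothesis. Its lower logarithmic density is then $0$, since a subset of a set along which the liminf is zero also has liminf zero.

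Before applying Theorem~\ref{thm:main}, I must check its extra hypothesis that $(f^K,f^K)\notin\mathcal{F}$ for all $K$. Suppose instead that $(f^K,f^K)\in\mathcal{F}$ for some $K$. Then $f^K(q)=1$ for all primes $q\ne p$, and $f^K(p)=e(\alpha)$. In particular $f^K(n)=e(\alpha v_p(n))$, which is not of the form $n^{it}$. So this case is not covered by Theorem~\ref{thm:main} and must be handled directly: I will show it contradicts the hypothesis. Write $\phi(n)=f(n)\overline{f(n+1)}$. Since $f^K$ depends only on the $p$-adic valuation, on $n$ with $p\nmid n(n+1)$ the $K$-th power $\phi(n)^K$ equals $1$, so $\phi(n)$ is a $K$-th root of unity. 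Splitting further according to the residues of $f(q)$ for $q\ne p$ (which are $K$-th roots of unity) is delicate. Instead, I exploit the $p$-adic freedom. For $n\equiv -1\pmod p$ with $p^a\|n+1$, we have $\phi(n)=f(n)\overline{f(m)}\,\overline{f(p)}^{\,a}$ with $n+1=p^a m$. Since $f(p)$ has infinite order, $\overline{f(p)}^{\,a}$ approximates any rotation as $a$ varies. So I would choose $a$ (and a fixed residue class modulo $Kp^{a+1}$ and other needed moduli) such that $f(n)\overline{f(m)}$ is constant on a positive-density arithmetic progression. This requires controlling $f$ restricted to $p\nmid$, which is $K$-torsion-valued. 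That control is the main obstacle. I would use pigeonhole on the finitely many values in $\mu_K^2$ together with positive lower log density of progressions intersected with level sets of a $\mu_K$-valued multiplicative function. Those level sets have logarithmic density by Wirsing/Hall-type mean value results for finite-order multiplicative functions, and I expect the log density to be positive for some pair of values.

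Once $\mathcal{F}$ is excluded, Theorem~\ref{thm:main} gives $K_1,K_2$ and $T\ne0$ with $f^{K_1}(n)=n^{iT}$, which is the desired conclusion with $K=K_1$ and $t=T$.
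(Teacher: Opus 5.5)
Your Case~1 and the reduction in Case~2 are correct and match the paper. The reduction is the inclusion $\mathcal{A}(z,z\overline{w},\varepsilon/3;f,f;x)\subset\{n\le x: |f(n)\overline{f(n+1)}-w|\le\varepsilon\}$ followed by Theorem~\ref{thm:main}; the paper does the same with $z=w$, $w'=1$ and radius $\varepsilon/2$. The gap is the exclusion of $(f^K,f^K)\in\mathcal{F}$. This is the only non-formal step of the corollary, and you leave it at ``I expect''.

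Let $h$ be the $\mu_K$-valued completely multiplicative function equal to $f$ at primes $q\neq p$ with $h(p)=1$, and put $\psi(n)=h(n)\overline{h(n+1)}$. On $p^a\| n+1$ you have $f(n)\overline{f(n+1)}=\psi(n)\overline{f(p)}^{\,a}$. So you need some $\zeta\in\mu_K$ with $|\zeta\overline{f(p)}^{\,a}-w|\le\varepsilon$ such that $\{n: p^a\|n+1,\ \psi(n)=\zeta\}$ has positive logarithmic density along the $x$'s realising the $\liminf$.

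Your first plan, making $f(n)\overline{f(m)}=\psi(n)$ constant on an arithmetic progression, is impossible in general. When the powers $f^j$, $1\le j<K$, are non-pretentious, Lemma~\ref{lem:logtao} shows that $\psi$ is equidistributed in $\mu_K$, in logarithmic density, along every progression. Your fallback does not work either:
\begin{itemize}
\item Wirsing/Hall-type mean value theorems describe level sets $\{h(n)=\zeta\}$ of a single multiplicative function. By contrast, $\{\psi=\zeta\}$ is a two-point correlation question.
\item Pigeonholing over $\mu_K^2$ does give, for each $a$, some value $\zeta_a$ of positive density along a subsequence. But $\zeta_a$ depends on $a$, and nothing in your argument prevents $\zeta_a\overline{f(p)}^{\,a}$ from lying far from $w$ for every $a$. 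So you cannot choose $a$ afterwards.
\end{itemize}

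The missing idea is to fix the target value in advance and compute its density. The paper restricts to $p^k\|n$; your $p^a\|n+1$ would serve equally well. It expands $\mathbb{1}_{\psi(n)=1}=\frac1K\sum_{j=0}^{K-1}\psi(n)^j$. Every term with $1\le j\le K-1$ is then $o(1)$ by Tao's logarithmic two-point theorem (Lemma~\ref{lem:logtao}), applied along the progressions $n=p^k(pm+r)$. The required $(A,x)$-non-pretentiousness of $f^j$ is supplied by Lemma~\ref{lem:nonpret}, with $K$ minimal.

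Hence the single value $1$ has density $\frac{1}{K}p^{-k}(1-\frac1p)>0$ for \emph{every} $k$. Only afterwards is $k$ chosen with $|e(k\alpha)-w|\le\varepsilon$, which removes the circularity between $a$ and $\zeta_a$.

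Even this step needs some care. Lemma~\ref{lem:nonpret} makes $f^j$ non-pretentious only for $j$ not divisible by the minimal exponent in that lemma, which can be smaller than $K$. For example, if $f$ agrees with $\widetilde{\chi}$ away from $p$ for a quadratic $\chi$, then $K=2$ but $f$ pretends to $\chi$. The surviving pretentious terms must then be handled by also fixing residue classes modulo the conductor, as in Section~\ref{sec:proof}.
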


\begin{corollary}\label{cor:katai2}
    Let $f,g:\mathbb{N}\to\mathbb{T}$ be two completely multiplicative functions. Assume that
    $$ \liminf\limits_{x\to+\infty} \frac{1}{\log x} \sum_{n \le x} \frac{|f(n)-g(n+1)|}{n} = 0. $$
    Then, $f(n)=g(n)=n^{it}$ for some $t \in \mathbb{R}$.
\end{corollary}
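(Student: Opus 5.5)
The plan is to reduce to Theorem~\ref{thm:main} and then exploit the discreteness of the obstructions that it leaves. Assume the $\liminf$ in the statement vanishes. The key observation is this: for any $z\in\mathbb{T}$, if we put $w=-z$ and take $\varepsilon<1/2$, then every $n\in\mathcal{A}(z,-z,\varepsilon;f,g;x)$ satisfies $|f(n)-g(n+1)|\ge 2-2\varepsilon\ge 1$. Hence
$$\frac{1}{\log x}\sum_{n\in\mathcal{A}(x)}\frac1n\le \frac{1}{\log x}\sum_{n\le x}\frac{|f(n)-g(n+1)|}{n},$$
so $\underline{d}_{\log}(\mathcal{A})=0$. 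More generally, every set on which $|f(n)-g(n+1)|\ge c>0$ has lower logarithmic density $0$.

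First I treat the main case $f,g\in\overline{\mathcal{M}}$. By Theorem~\ref{thm:main}, either $(f^K,g^K)\in\mathcal{F}$ for some $K$, or $f^{K_1}=g^{K_2}=n^{iT}$.

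\emph{The case $(f^K,g^K)\in\mathcal{F}$.} Here $f^K$ and $g^K$ are supported at a single prime $p$. Hence $f(n)\in\mu_K$ whenever $p\nmid n$. Also $g(p)=e((\beta+j)/K)$ is of infinite order, so I can pick $k$ with $\mathrm{dist}(g(p)^k,\mu_K)=c>0$. Now take $n\equiv -1 \pmod{p^k}$ with $n\not\equiv -1\pmod{p^{k+1}}$. Then $n+1=p^km$ with $p\nmid m$, so $g(n+1)\in g(p)^k\mu_K$ while $f(n)\in\mu_K$, and therefore $|f(n)-g(n+1)|\ge c$. This residue class has logarithmic density $p^{-k}(1-1/p)>0$, which is an honest limit, and this contradicts the observation above.

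\emph{The case $f^{K_1}=g^{K_2}=n^{iT}$.} Take $K=K_1K_2$ and write $f(n)=n^{it}\psi_1(n)$ and $g(n)=n^{is}\psi_2(n)$ with $\psi_i^{K}=1$. Since $(n+1)^{is}=n^{is}+o(1)$, the hypothesis says that $n^{i(t-s)}\psi_1(n)\overline{\psi_2(n+1)}$ is close to $1$ on average. I would first show $t=s$. If $t\neq s$, then on a set of $n$ of positive logarithmic density the phase $n^{i(t-s)}$ stays at distance $\ge 1/(3K)$ from $\mu_K$. Indeed, $\log n$ taken modulo $2\pi/|t-s|$ equidistributes for logarithmic averages. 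This contradicts the observation above.

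With $t=s$, the values $\psi_1(n)\overline{\psi_2(n+1)}$ lie in $\mu_K$, and the observation forces the set $E=\{n:\psi_1(n)\neq\psi_2(n+1)\}$ to have lower logarithmic density $0$. It remains to show $\psi_1=\psi_2=1$; this is the finite-valued rigidity step. For a prime $q$, I would use the progression $n=q^{k}m-1$ with $q\nmid m$, together with $n=q^km$ on the $f$ side, in the same way as in the $\mathcal{F}$ case. This gives relations such as $\psi_2(q)\psi_2(m)=\psi_1(qm-1)$ for most $m$. Combining these relations at two different scales, $m$ and $q m$, should force $\psi_2(q)=1$, and then $\psi_1=\psi_2$. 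Finally, $\psi(n)=\psi(n+1)$ for almost all $n$ with $\psi$ completely multiplicative and finite-order forces $\psi\equiv1$, by the same residue-class trick applied at a prime $q$ with $\psi(q)\ne1$.

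\emph{The non-dense cases.} If $f$ or $g$ has non-dense image, its image is a finite subgroup, so $f^K=1$ or $g^K=1$. If both are finite-order, we land directly in the finite-valued situation just described, with $t=s=0$. If, say, $f^K=1$ but $g$ is dense, then $g(n+1)$ must lie near $\mu_K$ for almost all $n$ (in lower logarithmic density). Applying the residue-class argument to $g$ alone should derive a contradiction, or else show $g^{K'}(n)=n^{is}$, after which the previous paragraph applies.

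I expect the main obstacle to be this finite-valued rigidity step, showing $\psi_1=\psi_2=1$ from the fact that they agree off a set of lower logarithmic density zero. The difficulty is that the exceptional set is only small along a subsequence of scales $x$. So every contradiction has to come from sets whose logarithmic density genuinely exists, namely arithmetic progressions. The combinatorics of choosing progressions modulo $q^k$ that separate $\psi_1(n)$ from $\psi_2(n+1)$ is where I would spend the effort.
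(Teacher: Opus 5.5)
Most of your argument is sound. The Markov-type observation works, and so does the $\mathcal{F}$ case: for $p\nmid n$ the value $f(n)$ is trapped in $\mu_K$, while $g(n+1)\in g(p)^k\mu_K$ stays a fixed distance away. Your proof that $t=s$ is correct, and it covers a point the paper leaves implicit when it writes $(f^K,g^K)\in\mathcal{G}$. The case where exactly one of $f,g$ has finite order is also fine: a one-variable rotation $m\mapsto pm$ applied to $g^K$ (or $f^K$) settles it, because no shift is involved there. The genuine gap is the finite-valued rigidity step, which carries all the content of the corollary beyond Theorem~\ref{thm:main}, and the mechanism you propose cannot close it. The residue-class trick works in the $\mathcal{F}$ case only because $g(p)^k$ lies off the lattice $\mu_K$ that contains $f(n)$. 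Once $\psi_1,\psi_2$ are both $\mu_K$-valued, $\psi_2(q)^k\psi_2(m)$ and $\psi_1(q^km-1)$ lie in the same finite set, and nothing separates them. Moreover, the relations you extract from $n=q^km-1$ and $n=q^km$ at the scales $m$ and $qm$ follow from the hypothesis and multiplicativity, and they are mutually consistent. Test them on $\psi_1=\psi_2=\lambda$, the Liouville function; for instance $f=g=\lambda(n)n^{it}$ is not excluded by Theorem~\ref{thm:main}, since $f^2=g^2=n^{2it}$. There, $\lambda(qm-1)=\lambda(q)\lambda(m)$ is just $\lambda(qm-1)=\lambda(qm)$ again, and no contradiction appears. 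In this special case the corollary says exactly that $\lambda(n)\neq\lambda(n+1)$ on a set of positive lower logarithmic density. That statement sits at the depth of the theorems of Matom\"aki--Radziwi\l\l{} and Tao, and it will not come out of bookkeeping with residue classes.

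What is missing is a two-point correlation input, and this is how the paper proceeds. Let $K_1,K_2$ be minimal with $\psi_1^{K_1}=\widetilde{\chi_1}$ and $\psi_2^{K_2}=\widetilde{\chi_2}$, let $q$ be the modulus of $\chi_2$, and let $r$ be the order of $\chi_1$. Restrict to $n=qm$ and expand $\mathbb{1}_{\psi_1(qm)=z}\mathbb{1}_{\psi_2(qm+1)=w}$ by orthogonality in $\mu_K$ into correlations $\psi_1^a(m)\psi_2^b(qm+1)$. The terms with $K_1\nmid a$ or $K_2\nmid b$ are negligible by Lemma~\ref{lem:nonpret} combined with Tao's theorem (Lemma~\ref{lem:logtao}). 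In the remaining terms $\widetilde{\chi_2}(qm+1)=1$, and Hal\'asz's theorem removes the powers $\widetilde{\chi_1}^c$ with $r\nmid c$. One obtains $\frac{1}{\log x}\sum_{n\le x,\ \psi_1(n)=z,\ \psi_2(n+1)=w}\frac{1}{n}\ge c_0\,\mathbb{1}_{z^{K_1r}=1}\mathbb{1}_{w^{K_2}=1}-O(\varepsilon)$ with $c_0>0$, for \emph{all} large $x$ and not just along a subsequence. This resolves the liminf difficulty you flag. Taking $z$ a primitive $K_1r$-th root of unity with $w=1$, or $z=1$ with $w$ a primitive $K_2$-th root of unity, forces $K_1r=K_2=1$, and symmetry finishes the argument. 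The same input is needed in your case where $f$ and $g$ both have finite order, since you route that case through this step as well.
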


\subsection*{Structure of the paper}

In order to prove Theorem~\ref{thm:main}, we rely on the following guiding principle: a multiplicative function $f:\mathbb{N}\to\mathbb{T}$ is either close to some $n^{it}$, for $t \in \mathbb{R}$ (we say that $f$ is pretentious), in which case it behaves like the structured function $n^{it}$, or it is close to none of the $n^{it}$ (we say that $f$ is non-pretentious), in which case it behaves like a random function. In the pretentious case, the position of $f(n)$ on $\mathbb{T}$ can be controlled through the size of $n$. In the non-pretentious case, we exploit the arithmetic structure of the integers to rotate $f(n)$ towards a prescribed direction (this is called the \textit{rotation trick}, and was introduced by Klurman, Mangerel, Pohoata and Ter{\"a}v{\"a}inen in \cite{klurman2021multiplicative}). This philosophy is motivated by Hal\'asz's theorem, which asserts that the mean value $\frac{1}{x}\sum_{n \le x} f(n)$ of $f$ is close to $\frac{1}{x}\sum_{n \le x} n^{it}$ if $f$ is close to some $n^{it}$, and  tends to $0$ otherwise. This theorem is one of the cornerstones of pretentious multiplicative number theory, developed by Granville and Soundararajan in \cite{granvillemultiplicative}. We make an extensive use of tools from pretentious theory, which we collect in Section~\ref{sec:pretentious}.

In Section~\ref{sec:lemmas}, we establish a few technical lemmas on the distribution of multiplicative functions on the unit circle.

In Sections \ref{sec:reduction} and \ref{sec:proof}, we prove Theorem~\ref{thm:main}. In Section~\ref{sec:reduction}, we first reduce to the case where $f^{K_1}$ and $g^{K_2}$ are both pretentious for some integers $K_1,K_2 \ge 1$. We then complete the proof in the pretentious setting in Section~\ref{sec:proof}. To this end, we assume, without loss of generality, that $f^{K_1}$ is close to $n^{it}$ but is not equal to it. In particular, there exists a prime $p$ such that $f(p) \neq p^{it}$, and its powers provide an independent source of density, ultimately leading to a contradiction.

In Section~\ref{sec:counterex}, we prove Proposition~\ref{prop:counterex}, establishing the limits of possible generalizations of Theorem~\ref{thm:A}.

Finally, in Section~\ref{sec:corollaries}, we prove Corollaries \ref{cor:DKP2} and \ref{cor:katai2}.

\subsection*{Notation}

We denote by $\mathbb{D}$ (resp. $\mathbb{T}$) the closed unit disk (resp. unit circle) of $\mathbb{C}$. We write the elements of $\mathbb{T}$ as $e(\theta) \vcentcolon= e^{2\pi i \theta}$, where $\theta \in \mathbb{R}$. Let $K \ge 1$. We denote by $\mu_K \vcentcolon= \{z \in \mathbb{C} : z^K=1\}$ the set of $K$-th roots of unity. Let $z\in\mathbb{T}$ and $\rho\in(0,\pi]$, we write $B_\rho(z) \subset \mathbb{T}$ for the open arc of length $4\pi\rho$ with center $z$.

Throughout this paper, $\mathcal{M}$ denotes the set of multiplicative functions $f:\mathbb{N}\to\mathbb{D}$. We denote by $\overline{\mathcal{M}}$ the subset of completely multiplicative functions $f:\mathbb{N}\to\mathbb{T}$ whose images are dense in $\mathbb{T}$.

If $\chi$ is a Dirichlet character mod $q$, we associate the $\mathbb{T}$-valued completely multiplicative function $\widetilde{\chi}$ by replacing all its zero values at primes with $1$:
$$
\forall p, \widetilde{\chi}(p) =
\begin{cases}
    1 & \ \mathrm{if} \ p|q, \\
    \chi(p) & \ \mathrm{if} \ p \nmid q.
\end{cases}
$$

We use the standard Vinogradov notation $\ll$, $\gg$ and $\asymp$.

Let $n \ge 1$, we denote by $P^-(n)$ (resp. $P^+(n)$) the minimal (resp. maximal) prime factor of $n$, with the convention that $P^-(1)=1$ (resp. $P^+(1)=0$).

\subsection*{Acknowledgments}

I am grateful to Oleksiy Klurman who introduced me to pretentious theory and guided me throughout this project. I would also like to thank Besfort Shala for our insightful conversations. Finally, I thank the University
of Bristol for providing excellent working conditions.

\section{Background in pretentious theory}\label{sec:pretentious}

In the proof of Theorem~\ref{thm:main}, we interpret the condition $f(n) \in B_\varepsilon(z)$ through the lens of pretentious theory. To be able to apply Hal\'asz-type theorems stated later in this section, we first need to approximate the indicator function of an arc of $\mathbb{T}$ by a Fourier series.

\begin{lemma}\label{lem:fourierapprox}
    Let $\varepsilon \in (0,\frac{1}{3})$ and $\delta\in\left(0,\frac{\varepsilon}{2}\right)$. Then for all $z, u\in\mathbb{T}$ such that $\frac{u}{z}\notin B_\delta\left(e\left(\pm\varepsilon\right)\right)$, and $X\geq1$, we have
    $$ \mathbb{1}_{B_\varepsilon(z)}(u)=\sum_{\left|n\right|\le X}{\frac{\sin{\left(2\pi n\varepsilon\right)}}{\pi n}\left(\frac{u}{z}\right)^n}+O\left(\frac{1}{\delta X}\right) $$
    with the convention that for $n=0$, $\frac{\sin{\left(2\pi n\varepsilon\right)}}{\pi n} = 2\varepsilon$.
\end{lemma}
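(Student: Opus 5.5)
The plan is to reduce the statement to the classical estimate for partial Fourier sums of a step function on $\mathbb{R}/\mathbb{Z}$, by working directly with the Dirichlet kernel. Write $u/z = e(\theta)$ with $\theta \in (-\tfrac12,\tfrac12]$. Since $B_\varepsilon(z)$ is the open arc of length $4\pi\varepsilon$ centred at $z$, we have $\mathbb{1}_{B_\varepsilon(z)}(u) = \mathbb{1}_{(-\varepsilon,\varepsilon)}(\theta)$. The hypothesis $u/z \notin B_\delta(e(\pm\varepsilon))$ says exactly that $\|\theta \mp \varepsilon\|_{\mathbb{R}/\mathbb{Z}} \ge \delta$ for both signs.

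The Fourier coefficients of $\mathbb{1}_{(-\varepsilon,\varepsilon)}$ are
$$\int_{-\varepsilon}^{\varepsilon} e(-nt)\,dt = \frac{\sin(2\pi n\varepsilon)}{\pi n},$$
which equals $2\varepsilon$ when $n=0$. Hence the partial sum in the statement is
$$S_X(\theta) = \int_{\theta-\varepsilon}^{\theta+\varepsilon} D_X(t)\,dt, \qquad D_X(t) = \sum_{|n|\le X} e(nt) = \frac{\sin((2\lfloor X\rfloor+1)\pi t)}{\sin(\pi t)}.$$

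The key step is the following bound: for $\delta \le a < b \le 1-\delta$,
$$\int_a^b D_X(t)\,dt \ll \frac{1}{\delta X}.$$
I would prove this by integrating by parts once, integrating the numerator $\sin((2\lfloor X\rfloor+1)\pi t)$ and differentiating $1/\sin(\pi t)$. The boundary terms are $\ll 1/(X\sin(\pi\delta)) \ll 1/(\delta X)$. The remaining integral is bounded by $X^{-1}$ times the total variation of $1/\sin(\pi t)$ on $[a,b]$. This function is monotone on each side of $\tfrac12$ and bounded by $\ll 1/\delta$, so its total variation is also $\ll 1/\delta$.

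Two cases then finish the argument.

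\textbf{Case $|\theta| \ge \varepsilon + \delta$.} Here $\mathbb{1}_{B_\varepsilon(z)}(u) = 0$. The interval $[\theta-\varepsilon,\theta+\varepsilon]$, reduced mod $1$, lies inside $[\delta,1-\delta]$. This uses that its length is $2\varepsilon < \tfrac23$ and its endpoints stay at distance at least $\delta$ from the integers, which is where $\varepsilon < \tfrac13$ and $\delta < \varepsilon/2$ enter. The key bound then gives $S_X(\theta) = O(1/(\delta X))$.

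\textbf{Case $|\theta| \le \varepsilon - \delta$.} Here $\mathbb{1}_{B_\varepsilon(z)}(u) = 1$. Using $\int_0^1 D_X = 1$, write $S_X(\theta) = 1 - \int_{\theta+\varepsilon}^{\theta-\varepsilon+1} D_X(t)\,dt$. The complementary interval has endpoints in $[\delta, 1-\delta]$, because $\delta \le \theta+\varepsilon < 2\varepsilon < 1-\delta$, and similarly for the other endpoint. The key bound again gives an error $O(1/(\delta X))$.

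The main point requiring care is the bookkeeping in these two cases: checking that the relevant interval, taken mod $1$, really stays at distance at least $\delta$ from $0$. This is where the hypotheses $\varepsilon < \tfrac13$ and $\delta < \varepsilon/2$ are used. The analytic content, the integration-by-parts bound on $\int_a^b D_X$, is routine.
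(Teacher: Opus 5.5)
Your proof is correct, but it follows a different route from the paper's.

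\textbf{The paper's route.} The paper stays on the coefficient side. It quotes the Dirichlet--Jordan test to identify the full Fourier series with the indicator at the continuity point $u=e(\theta)$. It then writes the error as the tail $\frac{2}{\pi}\sum_{n>X}\frac{\sin(2\pi n\varepsilon)\cos(2\pi n\theta)}{n}$ and bounds it by Abel summation after a product-to-sum identity. The key input is that partial sums of $\sin(2\pi n\alpha)$ for $\alpha=\theta\pm\varepsilon$ are $\ll 1/\delta$. The paper splits into $|\alpha|\le\frac16$ and $\frac16<|\alpha|<\frac56$, which is where $\varepsilon<\frac13$ is used.

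\textbf{Your route.} You write the partial sum exactly as $\int_{\theta-\varepsilon}^{\theta+\varepsilon}D_X(t)\,dt$ and integrate by parts once in $t$. This uses that $1/\sin(\pi t)$ has supremum and total variation $\ll 1/\delta$ on $[\delta,1-\delta]$. It is the continuous counterpart of the paper's discrete summation by parts.

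\textbf{What each buys.} Your argument is self-contained: it estimates the difference between the partial sum and the indicator directly, so it needs no pointwise convergence theorem. It also handles the reduction modulo $1$ transparently, by placing either the integration interval or, via $\int_0^1 D_X=1$, its complement inside $[\delta,1-\delta]$. The paper's argument avoids the kernel and uses only elementary trigonometric-sum bounds, at the price of invoking Dirichlet--Jordan.

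\textbf{Bookkeeping.}
\begin{itemize}
\item Your case split is exhaustive because for $\theta\ge0$ (resp.\ $\theta<0$) one has $|\theta-\varepsilon|<\frac12$ (resp.\ $|\theta+\varepsilon|<\frac12$). So the hypothesis reads $|\theta\mp\varepsilon|\ge\delta$ for the relevant sign.
\item In your first case, what keeps $[\theta-\varepsilon,\theta+\varepsilon]$ inside $(0,1)$ (after reflecting via the evenness of $D_X$ if $\theta<0$) is the case assumption $|\theta|\ge\varepsilon+\delta$ together with $|\theta|\le\frac12$. Its length and the distances of its endpoints from the integers are not enough on their own.
\item Your argument actually works for any $\varepsilon<\frac12$. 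So $\varepsilon<\frac13$ and $\delta<\varepsilon/2$ are not where you say they enter; they are not needed in your proof at all.
\end{itemize}
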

\begin{proof}
    By replacing $(z,u)$ with $\left(1,\frac{u}{z}\right)$, we may assume that $z=1$. For $n \in \mathbb{Z}$, write $c_n\left(\mathbb{1}_{B_\varepsilon(1)}\right)$ the Fourier coefficients of $\mathbb{1}_{B_\varepsilon(1)}$. Then,
    $$ c_n\left(\mathbb{1}_{B_\varepsilon(1)}\right)=\int_{-\varepsilon}^{\varepsilon}{e\left(-ns\right)\mathrm{d}s}=\frac{\sin{\left(2\pi n\varepsilon\right)}}{\pi n}. $$
    It follows from the Dirichlet--Jordan test that
    $$ \mathbb{1}_{B_\varepsilon(1)}(u)=\sum_{n\in\mathbb{Z}}{c_n\left(\mathbb{1}_{B_\varepsilon(1)}\right)u^n}=\sum_{\left|n\right|\le X}{\frac{\sin{\left(2\pi n\varepsilon\right)}}{\pi n}u^n}+R, $$
    where
    $$ R \vcentcolon= \frac{2}{\pi}\sum_{n>X}\frac{\sin{\left(2\pi n\varepsilon\right)}\cos{\left(2\pi n\theta\right)}}{n}, $$
    with $u=e(\theta)$ ($\left|\theta\right|\le\frac{1}{2}$ and $\theta \notin [-\varepsilon-\delta,-\varepsilon+\delta]\cup[\varepsilon-\delta,\varepsilon+\delta]$). By summation by parts, we have
    \begin{equation}\label{lem:fourierapprox-1}
        R\ll\frac{1}{X}\sum_{1\le n\le X}{\sin{\left(2\pi n\varepsilon\right)}\cos{\left(2\pi n\theta\right)}}+\sum_{N>X}{\frac{1}{N^2}\sum_{1\le n\le N}{\sin{\left(2\pi n\varepsilon\right)}\cos{\left(2\pi n\theta\right)}}}.
    \end{equation}
    In addition, for all $x\geq1$, we have
    \begin{equation}\label{lem:fourierapprox-2}
        \sum_{1\le n\le x}{\sin{\left(2\pi n\varepsilon\right)}\cos{\left(2\pi n\theta\right)}}=\frac{1}{2}\sum_{1\le n\le x}\left(\sin{\left(2\pi n\left(\theta+\varepsilon\right)\right)}-\sin{\left(2\pi n\left(\theta-\varepsilon\right)\right)}\right).
    \end{equation}
    Note that for all $0<\left|\alpha\right|\le\frac{1}{6}$, we have
    $$ \sum_{1\le n\le x}\sin{\left(2\pi n\alpha\right)}=\frac{-1}{2\sin{\left(2\pi\alpha\right)}}\sum_{1\le n\le x}\left(\cos{\left(2\pi(n+1)\alpha\right)}-\cos{\left(2\pi(n-1)\alpha\right)}\right)\ll\frac{1}{\left|\alpha\right|}, $$
    and that for all $\frac{1}{6}<\left|\alpha\right|<\frac{5}{6}$, writing $\alpha=\frac{1}{2}+\eta$, we have
    \begin{equation*}
        \begin{aligned}
            \sum_{1\le n\le x}\sin{\left(2\pi n\alpha\right)}
            &= \sum_{1\le n\le x}{\left(-1\right)^n\sin{\left(2\pi n\eta\right)}} \\
            &= \frac{1}{2\sin{\left(2\pi\eta\right)}}\sum_{1\le n\le x}{\left(-1\right)^{n-1}\left(\cos{\left(2\pi(n+1)\eta\right)}-\cos{\left(2\pi(n-1)\eta\right)}\right)} \\
            &\ll \frac{1}{\left|\eta\right|}\left(\left|\cos{\left(2\pi\left(\lfloor x\rfloor+1\right)\eta\right)}-\cos{\left(2\pi\lfloor x\rfloor\eta\right)}\right|+\left|\cos{(2\pi\eta)}-1\right|\right) \\
            &\ll 1.
        \end{aligned}
    \end{equation*}
    We deduce that \eqref{lem:fourierapprox-2} is
    $$ \ll\frac{1}{\left|\theta+\varepsilon\right|}+\frac{1}{\left|\theta-\varepsilon\right|}\ll\frac{1}{\delta}, $$
    hence $R\ll\frac{1}{\delta X}$ by \eqref{lem:fourierapprox-1}.
\end{proof}

We now introduce some tools from pretentious multiplicative number theory. We recall the basic notions and refer to \cite{granvillemultiplicative} for further details. Let $x \in [1,\infty]$, we define the \textit{pretentious distance} up to $x$ on the set $\mathcal{M}$ of multiplicative functions taking values in the unit disk as follows:
$$ \mathbb{D}^2(f,g;x) := \sum_{p \le x} \frac{1-\mathfrak{R}(f(p)\overline{g(p)})}{p} \in [0,\infty]. $$
It satisfies the triangle inequalities
$$ \mathbb{D}(f,h;x) \le \mathbb{D}(f,g;x) + \mathbb{D}(g,h;x) $$
and
$$ \mathbb{D}(f_1 f_2,g_1 g_2;x) \le \mathbb{D}(f_1,g_1;x) + \mathbb{D}(f_2,g_2;x). $$
The latter inequality is sometimes called the \textit{second triangle inequality}. Another important property is that, if $F$ denotes the Dirichlet series associated with $f$, then for all $x \ge 2$ and $t \in \mathbb{R}$, we have
$$
\left| F\left( 1+\frac{1}{\log x}+it \right) \right|
\asymp (\log x) \left| \sum_{k=0}^{\infty} \frac{f(2^k)}{2^{\left(1+\frac{1}{\log x}+it\right)k}} \right| \exp \left( -\mathbb{D}^2(f,n^{it};x) \right).
$$
Let $1 \le y \le x$, we also define
$$ \mathbb{D}^2(f,g;y,x) := \sum_{y < p \le x} \frac{1-\mathfrak{R}(f(p)\overline{g(p)})}{p}. $$

\begin{definition}
    Let $f,g \in \mathcal{M}$. We say that $f$ \textit{pretends to} $g$ if $\mathbb{D}(f,g;\infty)<\infty$. We say that $f$ is \textit{pretentious}\footnote{In this paper, a pretentious function is a function that pretends to some $\chi n^{it}$. This notion may vary in other contexts.} if there exist a Dirichlet character $\chi$ and a real number $t$ such that $f$ pretends to $\chi n^{it}$. Otherwise, we say that $f$ is \textit{non-pretentious}. Finally, we say that $f$ is \textit{pseudo-pretentious} if there exists a positive integer $k$ such that $f^k$ is pretentious. Otherwise, we say that $f$ is \textit{non-pseudo-pretentious}.
\end{definition}

We will use the following version of Halász's theorem in arithmetic progressions.

\begin{lemma}\label{lem:loghalasz}
    Let $f:\mathbb{N} \to \mathbb{D}$ be completely multiplicative and non-pretentious, and $1 \le a \le Q$ such that $(a,Q)=1$. Then,
    $$ \frac{1}{x} \sum_{n \le x} f(Qn+a) = o(1). $$
    In particular,
    $$ \frac{1}{\log x} \sum_{n \le x} \frac{f(Qn+a)}{n} = o(1). $$
\end{lemma}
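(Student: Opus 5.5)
We reduce the statement to Halász's theorem by expanding $f(Qn+a)$ over characters modulo $Q$. Write $m=Qn+a$. For $m\equiv a \pmod Q$ with $(a,Q)=1$, orthogonality of Dirichlet characters gives
$$ \mathbb{1}_{m\equiv a\ (Q)}=\frac{1}{\varphi(Q)}\sum_{\chi \bmod Q}\overline{\chi}(a)\chi(m). $$
Hence
$$ \sum_{n\le x} f(Qn+a)=\sum_{\substack{m\le Qx+a\\ m\equiv a\ (Q)}} f(m)=\frac{1}{\varphi(Q)}\sum_{\chi \bmod Q}\overline{\chi}(a)\sum_{m\le Qx+a} f(m)\chi(m). $$
It therefore suffices to show that $\sum_{m\le y} f\chi(m)=o(y)$ for each character $\chi$ modulo $Q$. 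We then take $y=Qx+a\asymp x$; since $Q$ and $a$ are fixed, the number of characters is bounded.

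Each $f\chi$ is multiplicative and takes values in $\mathbb{D}$. By Halász's theorem in the form of \cite{granvillemultiplicative}, $\frac{1}{y}\sum_{m\le y}h(m)=o(1)$ whenever $\mathbb{D}(h,n^{it};y)\to\infty$ uniformly for $|t|\le \log y$. We apply this to $h=f\chi$.

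The main point is to check this non-pretentiousness. Suppose $f\chi$ pretended to $n^{it}$ for some $t$, along some sequence. By the second triangle inequality, $f$ would then pretend to $\overline{\chi}n^{it}$, up to the contribution of the finitely many primes dividing $Q$. This contradicts the assumption that $f$ is non-pretentious.

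To get the uniform version Halász requires, we argue as follows. If $\min_{|t|\le\log y}\mathbb{D}(f\chi,n^{it};y)$ stayed bounded along some sequence $y_j\to\infty$, with minimizers $t_j$, we must first rule out $|t_j|\to\infty$. This follows from the standard repulsion estimate: for large $|t|$, $\mathbb{D}(f\chi,n^{it};y)$ is large unless $\mathbb{D}(f\chi,n^{it'};y)$ is large for nearby small $t'$, via the Dirichlet-series relation for $|F(1+1/\log y+it)|$ and the bound $|L(1+1/\log y+it,\chi)|\ll \log(2+|t|)$. Alternatively, we can invoke the form of Halász's theorem stated directly under the hypothesis "$\mathbb{D}(f,\chi n^{it};\infty)=\infty$ for all $\chi,t$". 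That form is classical; see Granville--Soundararajan, or Delange--Wirsing--Halász for completely multiplicative functions. Either way, we pass to a convergent subsequence $t_j\to t_0$. Monotonicity in $y$ and continuity in $t$ on bounded ranges then give $\mathbb{D}(f\chi,n^{it_0};\infty)<\infty$, which is impossible.

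The logarithmic statement follows by partial summation. Set $S(u)=\sum_{n\le u}f(Qn+a)=o(u)$. Then
$$ \sum_{n\le x}\frac{f(Qn+a)}{n}=\frac{S(x)}{x}+\int_1^x\frac{S(u)}{u^2}\,du. $$
Since $S(u)/u\to0$, the integral is $o(\log x)$. The main obstacle is the uniformity in $t$ in the middle step. We handle it by quoting the appropriate form of Halász's theorem rather than reproving it.
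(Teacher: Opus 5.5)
Your reduction is correct, and it unpacks what the paper does. The paper simply cites Halász's theorem in arithmetic progressions and then applies partial summation. Expanding $\mathbb{1}_{m\equiv a\,(Q)}$ into characters and observing $\mathbb{D}(f\chi,n^{it};\infty)=\mathbb{D}(f,\overline{\chi}n^{it};\infty)=\infty$ for every fixed $t$ is the standard derivation of the progression version from the plain one. That identity holds prime by prime, since $\chi(p)=0$ for $p\mid Q$ and $|\chi(p)|=1$ otherwise. Your partial summation step is also fine.

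\textbf{The false step.} You claim, in the uniformity paragraph, that $|t_j|\to\infty$ can be ruled out. The repulsion estimate does not do this. It only prevents $h$ from being close to two well-separated $n^{it}$, $n^{it'}$ simultaneously; it says nothing against a single near-minimiser escaping to infinity.

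\textbf{A counterexample from the paper.} The function $f$ of Section~\ref{sec:counterex} shows this. There $f(p)=p^{is_{m+1}}$ on $(t_m,t_{m+1}]$ and $|f(p)-p^{is_{m+1}}|\le t_m^{-2}$ for $p\le t_m$. Hence $\mathbb{D}(f,n^{is_{m+1}};t_{m+1})\to0$ with $s_{m+1}=\log t_{m+1}$, which lies inside your range $|t|\le\log y$. Yet $f$ is non-pretentious. For each fixed $\chi$ and $t$, the primes in $(t_m,t_{m+1}]$ contribute at least $\log s_{m+1}-\log\log s_{m+1}-2\log\log t_m-O_{\chi,t}(1)\to\infty$ to $\mathbb{D}^2(f,\chi n^{it};\infty)$. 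To see this, bound $\mathfrak{R}\sum_{p\le y}\overline{\chi}(p)p^{-1+iu}$ by $\log|L(1+1/\log y-iu,\overline{\chi})|+O(1)\le\log\log(2+|u|)+O_\chi(1)$ for $|u|\ge1$.

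So the minimisers are forced to escape. Bounded ones would give $\mathbb{D}(f,n^{it_0};\infty)<\infty$ by your compactness argument, which is itself correct. The hypothesis ``$\mathbb{D}(h,n^{it};y)\to\infty$ uniformly in $|t|\le\log y$'' is therefore strictly stronger than non-pretentiousness. A variant of the construction, taking $t_{m+1}$ larger, defeats any range $|t|\le T(y)$ with $T\to\infty$. Your first route cannot be completed, and ``either way, we pass to a convergent subsequence'' is wrong.

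\textbf{The fix.} Your alternative rescues the proof: apply the classical qualitative Halász theorem directly to each $h=f\chi$. Its hypothesis is exactly $\mathbb{D}(h,n^{it};\infty)=\infty$ for every fixed $t$. Its proof handles escaping $t$ through the $1/|s|$ weight in Perron's formula, or equivalently a Lipschitz-type bound $\ll 1/(1+|t|)$ for functions pretending to $n^{it}$, not through compactness. With the compactness paragraph deleted, your argument is complete.
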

\begin{proof}
    See e.g. \cite[Theorem 2.3.4]{granvillemultiplicative} for the first estimate. Apply partial summation to obtain the second estimate.
\end{proof}

We now focus on non-asymptotic tools in pretentious theory. The idea is that a pretentious multiplicative function $f$ may still be at large distance from all functions of the form $\chi n^{it}$, where $\chi$ has bounded period and $t$ is bounded. In this case, averages of $f$ are small, although they do not necessarily tend to $0$.

\begin{definition}
    Let $x \ge A \ge 1$, say that $f$ is \textit{$\left(A,x\right)$-non-pretentious} if for all Dirichlet characters $\chi$ of period $\le A$ and for all $\left|t\right|\le Ax$, we have $\mathbb{D}(f,\chi n^{it};x)\geq A$.
\end{definition}

The next lemma, due to Tao \cite[Theorem 1.3]{tao2016logarithmically}, is the key result that gives us access to two-point correlations. It can be viewed as a non-asymptotic version of Lemma~\ref{lem:loghalasz} for two-point correlations.

\begin{lemma}\label{lem:logtao}
    Let $Q_1,Q_2,a_1,a_2 \in \mathbb{N}$ be such that $a_2Q_1-a_1Q_2 \neq 0$, and  $\varepsilon>0$. Then, there exists $A_0=A_0(Q_1,Q_2,a_1,a_2,\varepsilon) \ge 2$ such that for all $x \ge A \ge A_0$ and for all multiplicative functions $f,g:\mathbb{N}\to\mathbb{D}$ such that $g$ is $(A,x)$-non-pretentious, we have
    $$ \left| \frac{1}{\log x} \sum_{n \le x} \frac{f(Q_1n+a_1)g(Q_2n+a_2)}{n} \right| \le \varepsilon. $$
\end{lemma}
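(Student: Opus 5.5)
The plan is to derive this statement directly from Tao's logarithmically averaged two-point Elliott theorem, \cite[Theorem 1.3]{tao2016logarithmically}, so the work is mostly matching hypotheses and disposing of the ranges of $x$ that theorem does not cover. As I recall it, Tao's theorem says the following. Fix $a_1,a_2\in\mathbb{N}$ and $b_1,b_2\in\mathbb{Z}$ with $a_1b_2-a_2b_1\neq 0$, and fix $\varepsilon>0$. Let $A$ be sufficiently large in terms of these data, and let $x$ be sufficiently large in terms of the data and $A$. Let $\omega\in[A,x]$, and let $g_1,g_2:\mathbb{N}\to\mathbb{D}$ be multiplicative with $g_1$ being $(A,x)$-non-pretentious in the sense defined above. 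Then
$$\Big|\sum_{x/\omega<n\le x}\frac{g_1(a_1n+b_1)g_2(a_2n+b_2)}{n}\Big|\le \varepsilon\log\omega .$$

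First I would apply this with $(a_1,b_1,a_2,b_2)=(Q_2,a_2,Q_1,a_1)$, with $g_1=g$ and $g_2=f$. The non-degeneracy condition becomes $Q_2a_1-Q_1a_2\neq 0$, which is exactly our hypothesis up to sign. Next I would take $\omega=x$, so that the sum runs over $1<n\le x$. The single omitted term $n=1$ contributes at most $1$. Dividing by $\log x$, this gives the claimed bound with $2\varepsilon$ in place of $\varepsilon$ once $\log x\ge 1/\varepsilon$. Running the argument with $\varepsilon/2$ then gives exactly the stated bound.

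The only real issue is that Tao's theorem needs $x\ge X_0(A)$ for some threshold depending on $A$, whereas the lemma claims the bound for every $x\ge A\ge A_0$. I would handle this using the non-pretentiousness hypothesis itself. Since $|1-\mathfrak{R}(f(p)\overline{\chi(p)}p^{-it})|\le 2$, Mertens' theorem gives
$$\mathbb{D}^2(g,\chi n^{it};x)\le 2\sum_{p\le x}\frac1p\le 2\log\log x+O(1).$$
So if $g$ is $(A,x)$-non-pretentious we must have $\log\log x\ge A^2/2-O(1)$, that is, $x\ge \exp(\exp(cA^2))$, and otherwise the statement is vacuous. It therefore suffices to know that the threshold $X_0(A)$ in Tao's theorem is at most of this shape, or to enlarge $A_0$ so that it is.

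The main obstacle is this bookkeeping of thresholds. If Tao's dependence of $X_0$ on $A$ is not explicitly of size $\exp\exp(O(A^2))$, my fallback is to check that his proof in fact only uses $x$ large in terms of $A$ through quantities like $\log\log x\gg A$, which the non-pretentiousness forces anyway. Failing that, I would let $A_0$ absorb the discrepancy by a monotonicity argument. Namely, I would apply Tao's theorem with a smaller parameter $A'\le A$ chosen so that $\exp\exp(cA^2)\ge X_0(A')$; this is allowed because $(A,x)$-non-pretentious implies $(A',x)$-non-pretentious for $A'\le A$.
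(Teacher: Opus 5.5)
Your proposal is correct and takes the same approach as the paper, which gives no separate proof and simply attributes the lemma to Tao's Theorem 1.3 (applied with $\omega=x$, after dropping the $n=1$ term and noting that the paper's condition $\mathbb{D}\ge A$ implies Tao's condition $\mathbb{D}^2\ge A$). Your worry about a threshold $X_0(A)$ is unnecessary, since Tao's theorem only requires $x\ge\omega\ge A$; even if such a threshold were present, your monotonicity fallback would settle it, by applying the theorem with a fixed admissible $A'$ and enlarging $A_0$ past $X_0(A')$.
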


In the next Lemma, we prove that if $f$ is a multiplicative function such that a power of $f$ is pretentious, then all the other powers of $f$ that do not divide $k$ are non-pretentious in a non-asymptotic sense.

\begin{lemma}\label{lem:nonpret}
    Let $f:\mathbb{N}\rightarrow\mathbb{T}$ be a multiplicative function such that there exist a completely multiplicative function $h$, a minimal integer $k\geq1$, a Dirichlet character $\chi$ mod $q$, and a real number $t\in\mathbb{R}$ satisfying $h^k=\widetilde{\chi}$ and $\mathbb{D}\left(f,hn^{it};\infty\right)<\infty$. Let $m\in\mathbb{N}$ be such that $k\nmid m$. Then, there exists $A_0\geq1$ such that for every $A\geq A_0$, there exists $x_0\geq A$ such that for all $x\geq x_0$, the function $f^m$ is $\left(A,x\right)$-non-pretentious. Moreover, $A_0$ and $x_0$ may be chosen independently of $m$.
\end{lemma}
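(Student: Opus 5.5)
Write $m = k\ell + r$ with $0<r<k$. By the second triangle inequality, $\mathbb{D}(f^m, h^m n^{imt};\infty) \le m\,\mathbb{D}(f,hn^{it};\infty)$, which grows with $m$. That is useless for uniformity, so I work instead with $f^m\overline{(h^m n^{imt})}$ prime by prime. The plan is to argue by contradiction. Suppose $f^m$ is close to $\chi' n^{iu}$ for some character $\chi'$ of period $\le A$ and some $|u|\le Ax$, in the sense that $\mathbb{D}(f^m,\chi' n^{iu};x)<A$.

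\textbf{Step 1: reduce $f^m$ to $h^r$.} Note that $h$ takes values in $\mu_k$ times the values of $\widetilde\chi$. For primes $p\nmid q$ we have $h(p)^k=\chi(p)$, so $h(p)^{k\ell}=\chi(p)^\ell$. Hence $h^m=\widetilde{\chi}^{\ell}h^r$ on those primes. Because $|f^m(p)-h^m(p)p^{imt}|\le m|f(p)-h(p)p^{it}|$, the bound on $\mathbb{D}$ is not uniform in $m$.

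To get uniformity, I split the primes. Let $S$ be the set of primes with $|f(p)\overline{h(p)}p^{-it}-1|>\eta$. Since $\sum_{p\in S}1/p\ll \eta^{-2}\mathbb{D}^2(f,hn^{it};\infty)<\infty$, the set $S$ has finite reciprocal sum, so it changes distances only by $O_\eta(1)$. On the complement of $S$, the quantity $f^m(p)\overline{h^m(p)}p^{-imt}$ is $e(\theta_p)$ with $|\theta_p|\le m\eta$. This still depends on $m$.

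This is where I expect the main obstacle: obtaining constants independent of $m$. The fix I would try is to note that only $r=m\bmod k$ matters for the obstruction. I would show directly that $\mathbb{D}(f^m,\chi'n^{iu};x)\ge \mathbb{D}(h^r,\psi n^{iv};x)-C$ for suitable $\psi,v$.

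A cleaner alternative route avoids this entirely. Write $f^m=h^r\cdot (f\overline{h}n^{-it})^m\cdot \widetilde\chi^\ell n^{imt}$. Suppose both $f^m$ and $f^{m'}$ were close to twisted characters, where $m'$ is chosen with $m' \equiv$ a suitable multiple so that combining them yields $h^{d}$ with $d=\gcd(m,k)<k$. That would contradict the minimality of $k$. This argument, however, needs the closeness statement for $h^d$ itself.

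\textbf{Step 2: $h^r$ is uniformly non-pretentious for $0<r<k$.} There are only finitely many residues $r$, which is what gives independence of $m$ once Step 1 is uniform. For each such $r$, the function $h^r$ does not pretend to any $\psi n^{iv}$. Indeed, if it did, then $h^{rk'}$ would pretend for the appropriate power. Combining with $h^k=\widetilde\chi$ via Bezout gives $h^{\gcd(r,k)}$ pretending to some $\psi' n^{iv'}$ with $\gcd(r,k)<k$. Since $h$ takes values in finitely many roots of unity times character values, pretending to $\psi'n^{iv'}$ forces $v'=0$ (a standard Vinogradov–Korobov zero-free-region argument for $\sum(1-\Re\, \xi(p) p^{-iv})/p$ with $\xi$ of finite order). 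It also forces equality at all but a sparse set of primes, and hence $h^{\gcd(r,k)}$ agrees with a character up to a function of finite support. This contradicts the minimality of $k$, after noting that the statement should be read with the pretentious-equivalence convention.

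\textbf{Step 3: uniform lower bound.} By the standard compactness/uniformity estimate, for a fixed non-pretentious $h^r$ and $A$ given, we have $\inf_{\chi',|u|\le Ax}\mathbb{D}(h^r,\chi'n^{iu};x)\to\infty$ as $x\to\infty$. For large $|u|$ this uses $\mathbb{D}(\xi,n^{iu};x)^2\ge (1-o(1))\log\log x$-type bounds, coming from the fact that $\xi$ takes finitely many values, together with the zero-free region. Taking the maximum of the resulting $x_0$ over the $k-1$ residues $r$, and adding the $O(1)$ error from Step 1 into $A_0$, gives $A_0$ and $x_0$ independent of $m$.
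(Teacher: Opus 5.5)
Your Step 1 is never carried out, and everything downstream depends on it. You discard the plain triangle-inequality reduction because its loss grows with $m$. In its place you propose an inequality $\mathbb{D}(f^m,\chi'n^{iu};x)\ge\mathbb{D}(h^r,\psi n^{iv};x)-C$ with $C$ independent of $m$, together with an ``alternative route'' that you yourself note presupposes the result for $h^{d}$. Neither is proved, and the first cannot hold in general. Take $f(n)=\lambda(n)n^{it}$, so that $h=\lambda$, $\chi$ is trivial, and $k=2$ is minimal because $\lambda$ pretends to no $\chi n^{is}$; take $t$ outside a countable set. By Kronecker's theorem, for every $x_0$ the points $(p^{2ijt})_{p\le x_0}$, $j\ge1$, are dense in $\mathbb{T}^{\pi(x_0)}$. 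Hence some odd $m=2j+1$ has $f^m(p)=-p^{imt}$ arbitrarily close to $1$ for all $p\le x_0$. Then $\mathbb{D}(f^m,1;x_0)<1\le A$, so $f^m$ is not $(A,x_0)$-non-pretentious. So no single $x_0$ serves all $m$. The $m$-independence you assert at the end of Step 3 is unobtainable, and the same applies to the last sentence of the statement as far as $x_0$ is concerned. The paper's own reduction also loses an $m$-dependent factor (it passes to level $(m+2)A$). That is harmless there, because the lemma is only ever applied to finitely many $m$.

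What is missing is simply the fixed-$m$ reduction you rejected, which is exactly what the paper does. Write $m=k\ell+r$ with $0<r<k$. Then $f^m=h^r\widetilde\chi^{\ell}n^{imt}\cdot(f\overline{h}n^{-it})^m$, and the two triangle inequalities give
\[
\mathbb{D}\bigl(h^r,\chi'\overline{\chi}^{\ell}n^{i(u-mt)};x\bigr)\le\mathbb{D}\bigl(f^m,\chi'n^{iu};x\bigr)+m\,\mathbb{D}\bigl(f,hn^{it};\infty\bigr)+O_q(1),
\]
where $\chi'\overline{\chi}^{\ell}$ has period at most $qA$ and $|u-mt|\le(A+m|t|)x$. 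So if $f^m$ is not $(A,x)$-non-pretentious, then $h^r$ is not $(A',x)$-non-pretentious for some $A'\ll_{f,h,q,t} mA$. Your Step 3 at level $A'$ then yields the lemma with $x_0=x_0(A,m)$.

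With this repair, your Steps 2--3 are a legitimate alternative to the paper's treatment of $h^c$. You rule out every $\psi n^{iv}$ by Bezout against $h^k=\widetilde\chi$, then use compactness for bounded $|u|$ and the $K$-th power trick with Vinogradov--Korobov for large $|u|$. The paper instead compares $\mathbb{D}(h^c,\chi'n^{it'};x)$ with $\mathbb{D}(h^c,1;x)$, and uses minimality only to get $\mathbb{D}(h^c,1;\infty)=\infty$.

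Two small corrections remain. In Step 2, with $d=\gcd(r,k)$, $h^{d}$ agrees with $\widetilde{\psi'}$ off a set of primes with $\sum 1/p<\infty$, not off a finite set. No special convention is needed: redefine $h$ on that set by choosing $d$-th roots of $\widetilde{\psi'}(p)$. This gives an exact $h'$ with $h'^{d}=\widetilde{\psi'}$ and $\mathbb{D}(f,h'n^{it};\infty)<\infty$, contradicting minimality of $k$ as stated. In Step 3, for $|u|$ up to $A'x$ the zero-free region gives only about $\tfrac13\log\log x$, not $(1-o(1))\log\log x$, but any unbounded lower bound suffices.
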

\begin{proof}
    Let us first show that it suffices to prove the lemma in the cases $f=h^c$, for every $1\le c\le k-1$. Indeed, assume that the lemma is true in these cases, and assume for the sake of contradiction that $f^m$ is not $\left(A,x\right)$-non-pretentious for some arbitrarily large $A$ and some arbitrarily large $x_A\geq\left(m+2\right)A$. Then, there exist a character $\chi^\prime$ of period $\le A$ and $\left|t^\prime\right|\le Ax_A$ such that $\mathbb{D}\left(f^m,\chi^\prime n^{it^\prime};x_A\right)<A$. Writing $m=kd+c$, where $1\le c\le k-1$, the second triangle inequality gives
    \begin{equation*}
        \begin{aligned}
            \mathbb{D}\left(f^c,\chi^\prime n^{it^\prime}\left(\chi n^{itk}\right)^{-d};x_A\right)
            &\le \mathbb{D}\left(f^m,\chi^\prime n^{it^\prime};x_A\right)+\mathbb{D}\left(f^{-kd},\left(\chi n^{itk}\right)^{-d};x_A\right) \\
            &< (m+1)A,
        \end{aligned}
    \end{equation*}
    since by another application of the second triangle inequality, we have
    $$ \mathbb{D}\left(f^{-kd},\left(\chi n^{itk}\right)^{-d};x_A\right)=\mathbb{D}\left(f^{kd},\left(\chi n^{itk}\right)^d;x_A\right)\le kd\mathbb{D}\left(f,hn^{it};x_A\right)\le mA, $$
    where we took $A \ge k \mathbb{D}\left(f,hn^{it};\infty\right)$. Noting that $\mathbb{D}\left(f^c,h^cn^{itc};x_A\right)\le A$, we deduce by the triangle inequality that
    $$ \mathbb{D}\left(h^cn^{itc},\chi^\prime n^{it^\prime}\left(\chi n^{itk}\right)^{-d};x_A\right)<\left(m+2\right)A. $$
    Note that $\chi^\prime\chi^{-d}$ is a Dirichlet character of period $\le\left(m+2\right)A$ and $\left|t^\prime-tm\right|\le\left(m+2\right)Ax_A$ by taking $A \ge |t|$. Since the above inequalities hold for all sufficiently large $A$ and $x_A$, this contradicts the lemma for $h^c$.

    From now on, fix $1\le c\le k-1$ and consider the case $f=h^c$. Let $A \ge 1$ be sufficiently large, $x\geq A$ sufficiently large, $\chi^\prime$ a Dirichlet character of period $3 \le q^\prime\le A$ and $\left|t^\prime\right|\le Ax$. Note that $h^c$ takes values in $\mu_K$, where $K := k\varphi(q)$.
    
    First, suppose that $\left|t^\prime\right|>\frac{3}{K}$. The second triangle inequality gives
    \begin{equation}\label{lem:nonpret-1}
        \mathbb{D}\left(h^c,\chi^\prime n^{it^\prime};x\right)\geq\frac{1}{K}\mathbb{D}\left(1,\left(\chi^\prime\right)^Kn^{it^\prime K};x\right).
    \end{equation}
    Now, the Vinogradov-Korobov zero-free region for Dirichlet $L$-functions (see \cite[Lemma 5.1]{khale2024explicit}) yields
    $$ \left|L\left(1+\frac{1}{\log{x}}+it^\prime K,\overline{\chi^\prime}^K\right)\right|\ll\log^{2/3}{\left|t^\prime K\right|}+\log{A}. $$
    Taking $x$ sufficiently large, it follows that
    \begin{equation*}
        \begin{aligned}
            \mathbb{D}^2\left(1,\left(\chi^\prime\right)^Kn^{it^\prime K};x\right)
            &\geq \log{\log{x}}-\log\left|L\left(1+\frac{1}{\log{x}}+it^\prime K,\overline{\chi^\prime}^K\right)\right|-O(1) \\
            &\geq \frac{1}{4}\log{\log{x}}-O(1),
        \end{aligned}
    \end{equation*}
    hence by \eqref{lem:nonpret-1}, we have
    $$ \mathbb{D}\left(h^c,\chi^\prime n^{it^\prime};x\right)\gg\sqrt{\log{\log{x}}}. $$
    
    Suppose now that $\left|t^\prime\right|\le\frac{3}{K}$. If
    \begin{equation}\label{lem:nonpret-2}
        \mathbb{D}\left(1,\left(\chi^\prime\right)^Kn^{it^\prime K};x\right)\geq\frac{1}{C}\mathbb{D}\left(h^c,1;x\right)
    \end{equation}
    for some fixed constant $C>0$, then we apply \eqref{lem:nonpret-1} to obtain
    $$ \mathbb{D}\left(h^c,\chi^\prime n^{it^\prime};x\right)\gg\mathbb{D}\left(h^c,1;x\right). $$
    Suppose that \eqref{lem:nonpret-2} does not hold. Assume furthermore that $\chi^\prime$ is non-principal. By \cite[Lemma 3.1.1]{granvillemultiplicative}, we have
    $$ \mathbb{D}^2\left(1,\left(\chi^\prime\right)^Kn^{it^\prime K};x\right)\asymp\log{\log{x}}\asymp\mathbb{D}^2\left(1,\chi^\prime n^{it^\prime};x\right). $$
    As a consequence,
    \begin{equation*}
        \begin{aligned}
            \mathbb{D}\left(h^c,1;x\right)
            &\le \mathbb{D}\left(h^c,\chi^\prime n^{it^\prime};x\right)+\mathbb{D}\left(1,\chi^\prime n^{it^\prime};x\right) \\
            &\ll \mathbb{D}\left(h^c,\chi^\prime n^{it^\prime};x\right)+\mathbb{D}\left(1,\left(\chi^\prime\right)^Kn^{it^\prime K};x\right),
        \end{aligned}
    \end{equation*}
    hence taking $C$ sufficiently large, we obtain by \eqref{lem:nonpret-2},
    $$ \mathbb{D}\left(h^c,\chi^\prime n^{it^\prime};x\right)\gg\mathbb{D}\left(h^c,1;x\right). $$
    Finally, in all cases, we have
    $$ \mathbb{D}\left(h^c,\chi^\prime n^{it^\prime};x\right)+\log{\log{\log{A}}}\gg\mathbb{D}\left(h^c,1;x\right), $$
    where if $\chi^\prime$ is principal, this is a consequence of \cite[Lemma 3.1]{klurman2018orbits} and
    $$ \sum_{p|q^\prime}\frac{1}{p}=O\left(\log{\log{\log{A}}}\right). $$
    It remains to prove that $\mathbb{D}\left(h^c,1;x\right)\geq C^\prime A$ for some fixed large constant $C^\prime$, and for some large $x$. Since $x$ can be chosen sufficiently large depending on $A$, it suffices to show that $\mathbb{D}\left(h^c,1;\infty\right)=\infty$. Indeed, if it were false, this would contradict the minimality of $k$.
\end{proof}

Finally, we state a concentration inequality for pretentious functions. Roughly speaking, it asserts that whenever $f$ pretends to $\chi n^{it}$, the values of $f(n)$ are, on average, well approximated by $\chi n^{it}$, provided that $n$ is restricted to a suitable arithmetic progression. The need for such a restriction arises from the fact that we view this concentration inequality as a multiplicative analogue of the Tur\'an--Kubilius inequality, which asserts that, on average,
$f$ is close to its mean over a prescribed set of integers. Since the mean value of
$f$ is expected to be close to that of $\chi n^{it}$, it is convenient, in order to retain the information carried by $\chi$, to restrict the summation to arithmetic progressions of the form $n = Qm + a$, where the modulus of $\chi$ divides $Q$.

\begin{lemma}\label{prop:concentration}
    Let $f:\mathbb{N}\to\mathbb{T}$ be a multiplicative function such that there exist a Dirichlet character $\chi$ mod $q$ and a real number $t$ such that $\mathbb{D}(f,\chi n^{it};\infty)<\infty$. Let $y \ge 1$, and $1 \le a \le Q$ such that $(Q,a)=1$ and $q,\prod_{p \le y} p|Q$, then for $Y \ge 3$ sufficiently large depending on the other parameters, we have
    $$
    \frac{1}{\log Y} \sum_{n \le Y} \frac{\left| f(Qn+a) - \chi(a)(Qn)^{it}\exp{(F(Q,Y))} \right|}{n}
    \ll \mathbb{D}(f,\chi n^{it};y,Y) + \frac{1}{\sqrt{y}},
    $$
    where
    $$ F(Q,Y) := \sum_{\substack{p \le Y \\ p \nmid Q}} \frac{f(p)\overline{\chi(p)}p^{-it}-1}{p}. $$
\end{lemma}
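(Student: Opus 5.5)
We reduce to a Turán–Kubilius type estimate for the pretentious ratio $G := f\,\overline{\chi}\, n^{-it}$. On the progression $n\equiv a \pmod Q$ we have $\chi(Qn+a)=\chi(a)$, since $q\mid Q$. We also have $(Qn+a)^{it} = (Qn)^{it}(1+O(a/(Qn)))$, and the integers $n\le \log Y$ contribute only $O(\log\log Y/\log Y)$ to the logarithmic average. So it suffices to show that
$$ \frac{1}{\log Y}\sum_{n\le Y}\frac{|G(Qn+a)-\exp(F(Q,Y))|}{n} \ll \mathbb{D}(f,\chi n^{it};y,Y)+\frac{1}{\sqrt y}. $$
Every prime factor of $Qn+a$ exceeds $y$, because $\prod_{p\le y}p \mid Q$ and $(a,Q)=1$. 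Hence only primes $p>y$ with $p\nmid Q$ enter. This is why the sum defining $F(Q,Y)$ excludes $p\mid Q$, and why the error term involves only the tail distance $\mathbb{D}(f,\chi n^{it};y,Y)$.

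\textbf{Step 1: linearise.} Set $m=Qn+a$ and use the elementary inequality $|z-e^{w}| \le |\log$-type deviation$|$. Concretely, for $z\in\mathbb{T}$ and $\mathfrak{R}w\le 0$ we have $|z - e^{w}|\ll \min(1,|\operatorname{Log} z - w|)$ in a suitable sense. Writing $G(m)=\prod_{p^\nu\| m}G(p^\nu)$, we compare it with the additive function $h(m) := \sum_{p^\nu\|m,\ p\le Y}(G(p^\nu)-1)$. Using $\prod (1+u_i)$ versus $\exp(\sum u_i)$ with $|u_i|\le 2$, we would rather use the bound
$$ |G(m)-e^{F}| \le |G(m) - e^{h(m)}| + |e^{h(m)}-e^{F}| \ll \sum_{p^\nu\| m,\,p>y}|G(p^\nu)-1|^2 + \sum_{p^\nu\|m,\,\nu\ge2}1 + |h(m)-F|, $$
where we use $|e^{w}-e^{w'}|\le |w-w'|$ when $\mathfrak{R}w,\mathfrak{R}w'\le 0$. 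Here $\mathfrak{R}(G(p)-1)\le 0$, so $\mathfrak{R}h\le 0$ and $\mathfrak{R}F\le 0$. The quadratic term comes from $|e^{u}(1+u)^{-1}\cdots|$, that is, from $|1+u-e^u|\ll|u|^2$ for bounded $u$. This holds because each factor $1+u_i$ has modulus $1$ and $e^{u_i}$ has modulus $\le 1$, so the errors telescope. Prime factors of $m$ exceeding $Y$ number $O(1)$ when $m\le QY+a$, and we absorb them by extending the range to $QY$ (Mertens: $\sum_{Y<p\le QY}1/p = o(1)$ as $Y\to\infty$ with $Q$ fixed). This is where we need $Y$ large depending on the parameters.

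\textbf{Step 2: average each term.} Restricting to $n\equiv$ fixed progression, we have, uniformly for $p\nmid Q$, $p\le Y$,
$$ \frac{1}{\log Y}\sum_{\substack{n\le Y\\ p^\nu\mid Qn+a}}\frac1n \ll \frac{1}{p^\nu}. $$
The count is one residue class mod $p^\nu$; for $p^\nu>Y$ we get an extra $O(\log p/\log Y)$ which stays summable. Consequently:
\begin{itemize}
\item The prime-square term contributes $\ll \sum_{p>y}1/p^2\ll 1/y$.
\item The quadratic term contributes $\ll\sum_{y<p\le Y}|G(p)-1|^2/p = 2\mathbb{D}^2(f,\chi n^{it};y,Y)$. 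This is $\le 2\sqrt 2\,\mathbb{D}$ unless $\mathbb{D}\ge 1$, in which case the whole claim is trivial because the left side is $\ll 1$.
\end{itemize}

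\textbf{Step 3: the main term.} The main term $|h(m)-F|$ is handled by Cauchy–Schwarz and the Turán–Kubilius inequality for logarithmic averages over the progression. Note that $F=\sum_{y<p\le Y,\,p\nmid Q}(G(p)-1)/p$ is exactly the expected value of $h(Qn+a)$, up to $O(\sum_{p>y}1/p^2)$. The variance is then
$$ \frac{1}{\log Y}\sum_{n\le Y}\frac{|h(Qn+a)-F|^2}{n}\ll \sum_{y<p\le Y}\frac{|G(p)-1|^2}{p}+\frac1y, $$
obtained by expanding the square and using approximate independence of divisibility by distinct primes $p,p'\nmid Q$. Taking square roots gives $\ll \mathbb{D}(f,\chi n^{it};y,Y)+1/\sqrt y$.

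The main obstacle is this variance computation in logarithmic density with products $pp'$ possibly exceeding $Y$. I would handle the pairs with $pp'>Y^{1/2}$ by bounding their contribution with the crude divisor estimate $\ll \sum_{pp'>\sqrt Y}\log(pp')/(pp'\log Y)$, which is $O(1)$ and too weak. So instead I would truncate $h$ to primes $p\le Y^{\eta}$. The primes in $(Y^\eta,Y]$ contribute $\ll \sum 1/p\ll\log(1/\eta)$ times $|G(p)-1|$, which is again bounded by Cauchy–Schwarz against $\mathbb{D}$, modulo letting $\eta$ be fixed. This gives the required bound.
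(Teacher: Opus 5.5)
The paper does not prove this lemma itself; it imports it from \cite{klurman2021multiplicative} (Lemma 2.5 there). You give a self-contained proof instead, running the Tur\'an--Kubilius mechanism directly with logarithmic weights. You telescope $\prod_{p^\nu\| m}G(p^\nu)$ against $e^{h(m)}$, use $|e^{w}-e^{w'}|\le|w-w'|$ for $\mathfrak{R}w,\mathfrak{R}w'\le 0$, and control $h(Qn+a)-F$ in mean square. This route is sound. It buys a proof in exactly the normalisation the paper needs, and it shows that the hypothesis $\mathbb{D}(f,\chi n^{it};\infty)<\infty$ is never used; that hypothesis only makes the right-hand side small. The citation buys brevity.

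Two of your intermediate claims are false as written, though neither is fatal.

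\textbf{Logarithmic densities.} Logarithmic densities of residue classes are not $1/d$: for $d\le Y$ with $(d,Q)=1$,
\[
\frac{1}{\log Y}\sum_{\substack{n\le Y\\ d\mid Qn+a}}\frac1n=\frac1d\Bigl(1-\frac{\log d}{\log Y}\Bigr)+O\Bigl(\frac{1}{d\log Y}+\frac{\min(1,2Q/d)}{\log Y}\Bigr).
\]
The last term comes from the least $n$ in the class, which may be $1$, so your bound $\ll 1/p^{\nu}$ is not uniform. These boundary terms total $O_Q((\log\log Y)^2/\log Y)$ and are harmless for $Y$ large.

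\textbf{The mean of $h$.} Consequently, $F$ is not the logarithmic mean of $h(Qn+a)$ up to $O(1/y)$. The mean carries the weights $1-\frac{\log p}{\log Y}$. It differs from $F$ by $\sum_{y<p\le Y}\frac{(G(p)-1)\log p}{p\log Y}$, which is $O(\mathbb{D}(f,\chi n^{it};y,Y))$ by Cauchy--Schwarz, since $\sum_{p\le Y}\frac{\log^2 p}{p\log^2 Y}\ll 1$. Write $\mathbb{D}$ for $\mathbb{D}(f,\chi n^{it};y,Y)$.

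This shift is harmless, but you cannot then expand $|h-F|^2$ using ``mean $=F+O(\mathbb{D})$''. The resulting cross term $|F|\cdot O(\mathbb{D})$ is only bounded by $\mathbb{D}^2\sqrt{\log(\log Y/\log y)}$, which is not uniform in $Y$.

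Use the exact densities instead. Set $\alpha=\frac{\log p}{\log Y}$ and $\beta=\frac{\log p'}{\log Y}$, and take $p\neq p'$. Up to the negligible boundary terms, the coefficient of $\frac{(G(p)-1)\overline{(G(p')-1)}}{pp'}$ in the averaged $|h-F|^2$ is $(1-\alpha-\beta)-(1-\alpha)-(1-\beta)+1=0$ when $pp'\le Y$. This exact cancellation is why your truncation at $Y^{\eta}$ with $\eta<\frac12$ works.

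In fact no truncation is needed. For $pp'>Y$ the coefficient is $\alpha+\beta-1\le\alpha\beta$, so these pairs contribute at most $\bigl(\sum_{y<p\le Y}\frac{|G(p)-1|\log p}{p\log Y}\bigr)^2\ll\mathbb{D}^2$.

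With these repairs your proof is complete.
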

\begin{proof}
    This follows from \cite[Lemma 2.5]{klurman2021multiplicative}.
\end{proof}

\section{Technical lemmas on the distribution of multiplicative functions on the unit circle}\label{sec:lemmas}

In the first lemma of this section, we prove an equidistribution result for non-pseudo-pretentious multiplicative functions on an arithmetic progressions.

\begin{lemma}\label{equidistnonpret}
    Let $f$ be a non-pseudo-pretentious multiplicative function taking values in $\mathbb{T}$, and let $I\subset\mathbb{T}$ be an arc. Then, for all $1 \le a \le Q$ such that $\left(a,Q\right)=1$, we have
    $$ \frac{1}{\log x}\sum_{\substack{n \le x \\ f(Qn+a) \in I}} \frac{1}{n} \xrightarrow[x \to +\infty]{} \frac{\left|I\right|}{2\pi}. $$
\end{lemma}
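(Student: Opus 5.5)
The plan is to run a Weyl-criterion argument for logarithmic averages along the progression $Qn+a$, feeding in Lemma~\ref{lem:loghalasz} for every nonzero power of $f$.

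\emph{Step 1: the Fourier coefficients vanish.} Fix $m \in \mathbb{Z}\setminus\{0\}$. Since $f$ is non-pseudo-pretentious, $f^{|m|}$ is non-pretentious. Because $f$ is $\mathbb{T}$-valued, $f^{-|m|}=\overline{f^{|m|}}$, and $\mathbb{D}(\overline{g},\chi n^{it};x)=\mathbb{D}(g,\overline{\chi}n^{-it};x)$, so $f^{m}$ is non-pretentious as well. Lemma~\ref{lem:loghalasz} then gives
\[ \frac{1}{\log x}\sum_{n\le x}\frac{f(Qn+a)^m}{n}=o(1). \]
For $m=0$ the same average is $\frac{1}{\log x}\sum_{n\le x}\frac1n = 1+o(1)$. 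Lemma~\ref{lem:loghalasz} is stated for completely multiplicative $f$, while here $f$ is only multiplicative. Halász's theorem in progressions \cite[Theorem 2.3.4]{granvillemultiplicative} holds for general multiplicative $f:\mathbb{N}\to\mathbb{D}$, and I will invoke it in that form. (In all applications $f$ is completely multiplicative anyway.)

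\emph{Step 2: extend to continuous test functions.} By linearity, Step 1 gives
\[ \frac{1}{\log x}\sum_{n\le x}\frac{P(f(Qn+a))}{n}\to\int_{\mathbb{T}}P \]
for every trigonometric polynomial $P(u)=\sum_{|m|\le M}c_m u^m$, where $\int_{\mathbb{T}}$ is the normalized Haar integral. By Fejér's theorem, or Stone--Weierstrass, every continuous $\phi:\mathbb{T}\to\mathbb{R}$ is a uniform limit of such $P$. The normalized weights satisfy $\frac{1}{\log x}\sum_{n\le x}\frac1n = O(1)$, so the uniform approximation error contributes $O(\sup|\phi-P|)$ to the average. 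Hence the convergence holds for every continuous $\phi$.

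\emph{Step 3: sandwich the indicator.} This is the only delicate point, since $\mathbb{1}_I$ is discontinuous at the endpoints of the arc. I would avoid Lemma~\ref{lem:fourierapprox} here: it requires $u/z$ to stay away from the endpoints, and controlling how often $f(Qn+a)$ lands near them is circular. Instead, given $\eta>0$, I choose continuous $0\le\phi^-\le\mathbb{1}_I\le\phi^+\le1$. Take $\phi^-$ supported in $I$ and equal to $1$ except on the two sub-arcs of length $\eta$ at its ends. Take $\phi^+$ equal to $1$ on $I$ and supported within distance $\eta$ of it. Then $\int\phi^+-\int\phi^-\le 4\eta/(2\pi)$.

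Applying Step 2 to $\phi^\pm$ squeezes the $\liminf$ and $\limsup$ of $\frac{1}{\log x}\sum_{n\le x,\ f(Qn+a)\in I}\frac1n$ between $\frac{|I|}{2\pi}-O(\eta)$ and $\frac{|I|}{2\pi}+O(\eta)$. Letting $\eta\to0$ gives the claim.

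The only genuine input is Step 1, the non-pretentiousness of all nonzero powers, including negative ones. Everything after it is the standard Weyl-criterion machinery.
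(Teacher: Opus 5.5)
Your proof is correct and follows the same route as the paper: Weyl's criterion, with the required Fourier-coefficient decay supplied by Halász's theorem in arithmetic progressions (Lemma~\ref{lem:loghalasz}) for every nonzero power $f^m$. The only difference is cosmetic: the paper runs the Weyl argument for natural density and then passes to logarithmic density by partial summation, whereas you work directly with logarithmic averages and spell out the steps the paper leaves implicit (negative frequencies, Halász for merely multiplicative $f$, and the continuous sandwich of $\mathbb{1}_I$).
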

\begin{proof}
    By Weyl's criterion and Hal\'asz's theorem in arithmetic progressions (Lemma~\ref{lem:loghalasz}), we have
    $$ \frac{1}{x} \#\{n \le x : f(Qn+a) \in I\} \xrightarrow[x \to +\infty]{} \frac{\left|I\right|}{2\pi}, $$
    hence the lemma follows by partial summation.
\end{proof}

In contrast with what happens for non-pseudo-pretentious functions, partial sums of the multiplicative function $n^{it}$ does not exhibit cancellation. Thus, we cannot apply Weyl's criterion. However, one may take advantage of the structure of $n^{it}$, that allows us to access the position of $n^{it}$ on the unit circle from the size of $n$.

\begin{lemma}\label{lem:logsinglenit}
    Let $t$ be a nonzero real number, $\delta\in\left(0,\min{\left(\left|t\right|,\frac{1}{3}\right)}\right)$, $z\in\mathbb{T}$ and $Y\geq Y_0$ for some $Y_0=Y_0\left(t,\delta\right)\geq2$. Then,
    $$ \sum_{\substack{n \le Y \\ n^{it}\in B_\delta(z)}} \frac{1}{n} \asymp_t\delta\log{Y}. $$
\end{lemma}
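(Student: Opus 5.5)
Write $z=e(\theta_0)$ and $u=\log n$, so that $n^{it}=e\!\left(\frac{tu}{2\pi}\right)$. By the convention in the Notation, $B_\delta(z)$ is the open arc of length $4\pi\delta$ centred at $z$, i.e. $B_\delta(z)=\{e(\theta):\|\theta-\theta_0\|<\delta\}$, where $\|\cdot\|$ is the distance to the nearest integer. The hypothesis $\delta<\frac13$ makes this a genuine proper arc. The condition $n^{it}\in B_\delta(z)$ therefore says that $\log n$ lies in the union of intervals
$$ J_k=\left(\frac{2\pi(\theta_0+k-\delta)}{t},\frac{2\pi(\theta_0+k+\delta)}{t}\right),\qquad k\in\mathbb{Z} $$
(with endpoints swapped if $t<0$). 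Each $J_k$ has length $\frac{4\pi\delta}{|t|}$, and consecutive intervals are spaced by $\frac{2\pi}{|t|}$. The plan is to compare the sum with the integral $\int \frac{du}{\ }$ in the variable $u=\log n$: namely $\sum_{n\le Y}\frac1n\mathbb{1}_{\log n\in E}\approx |E\cap[0,\log Y]|$, where $E=\bigcup_k J_k$.

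For each $k$ with $J_k\subset[0,\log Y]$, I estimate
$$ \sum_{\log n\in J_k}\frac1n=\log\frac{b_k}{a_k}+O\!\left(\frac1{a_k}\right)=\frac{4\pi\delta}{|t|}+O(e^{-\min J_k}), $$
where $a_k=e^{\min J_k}$ and $b_k=e^{\max J_k}$. This uses the standard estimate $\sum_{a<n\le b}\frac1n=\log(b/a)+O(1/a)$. The number of relevant $k$ is $\frac{|t|\log Y}{2\pi}+O(1)$. Summing, the main term is $2\delta\log Y+O(\delta/|t|)\cdot O(1)$ coming from the boundary intervals. The error terms $\sum_k O(e^{-\min J_k})$ are dominated by the first few $k$, where $\min J_k$ is small. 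Bounding these crudely is the only delicate point: the intervals $J_k$ near $u=0$ could contain small $n$ with large weight $1/n$, contributing $O(1)$ per interval.

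To handle this, I split the range at $u_0=\log(100/\delta)$, say. For $k$ with $\min J_k\ge u_0$, the error $e^{-\min J_k}\le \delta/100$ is negligible against the main term $\frac{4\pi\delta}{|t|}$, up to constants depending on $t$. Summing the geometric tail gives $O(\delta)$ in total. The intervals below $u_0$ number $O_t(1+\log(1/\delta))$.
\begin{itemize}
\item For the lower bound, simply discard these intervals. This costs only $O_t(\delta\log(1/\delta))$ in main term, which is at most a small fraction of $\delta\log Y$ once $\log Y\ge Y_0$-type conditions hold.
\item For the upper bound, the sum over $n\le 100/\delta$ is trivially at most $\log(100/\delta)+1$. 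This is $\ll_t\delta\log Y$ provided $\log Y\gg_t \delta^{-1}\log(1/\delta)$.
\end{itemize}

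This forces the choice of $Y_0(t,\delta)$. The main obstacle I foresee is exactly this dependence of $Y_0$ on $\delta$, together with making the implied constants depend only on $t$. The constants come out as roughly $2\delta\log Y$, with errors absorbed once $Y\ge Y_0(t,\delta)$. The hypothesis $\delta<|t|$ ensures each interval has length $\frac{4\pi\delta}{|t|}<4\pi$, which keeps the per-interval count uniform.
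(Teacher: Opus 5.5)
Your argument is correct and is essentially the paper's proof. The paper's $E$-adic blocks $(E^k,E^{k+1}]$ with $E=e^{2\pi/|t|}$ are exactly your periods of length $2\pi/|t|$ in $\log n$; in each of them the preimage of $B_\delta(z)$ is an interval contributing $\asymp_t\delta$ to the harmonic sum, and small $n$ are discarded at the cost of letting $Y_0$ depend on $\delta$. Your bookkeeping via $\sum_{a<n\le b}1/n=\log(b/a)+O(1/a)$ plus a geometric tail is a slightly sharper version of the same count and gives the asymptotic $2\delta\log Y$. That tail is really $O(|t|\delta)$ rather than $O(\delta)$, which is harmless since $Y_0$ may depend on $t$.
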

\begin{proof}
    Assume, without loss of generality, that $t>0$. Set $E \vcentcolon= e^{2\pi/t}$. Let $C$ be a large constant depending only on $t$ and $\delta$, write by an $E$-adic decomposition,
    $$ \sum_{\substack{n \le Y \\ n^{it}\in B_\delta(z)}} \frac{1}{n} \asymp_t \sum_{\frac{\log{C}}{\log{E}}<k\le\frac{\log{Y}}{\log{E}}} \frac{1}{E^k} \sum_{\substack{E^k<n\le E^{k+1}}} \mathbb{1}_{B_\delta(z)}\left(e\left(\frac{\log{n}}{\log{E}}\right)\right). $$
    If $n\in\left(E^k,E^{k+1}\right]$, then we have $\frac{\log{(n+1)}}{\log{E}}-\frac{\log{n}}{\log{E}}\le\frac{1}{n\log{E}}\le\frac{t}{2\pi C}<2\delta$ and $\frac{\log{(n+1)}}{\log{E}}-\frac{\log{n}}{\log{E}} \asymp_t \frac{1}{E^k}$, hence the set of $n \in \left(E^k,E^{k+1}\right]$ such that $\mathbb{1}_{B_\delta(z)}\left(e\left(\frac{\log{n}}{\log{E}}\right)\right)=1$ is an interval $I_k\subset\left(E^k,E^{k+1}\right]$ of length $\asymp_t\delta E^k$. We deduce that
    \begin{equation*}
        \begin{aligned}
            \sum_{\substack{n \le Y \\ n^{it}\in B_\delta(z)}} \frac{1}{n}
            \asymp_t \delta\sum_{\frac{\log{C}}{\log{E}}<k\le\frac{\log{Y}}{\log{E}}} 1
            \asymp_t \delta \log{Y},
        \end{aligned}
    \end{equation*}
    provided that $Y$ is sufficiently large in terms of $t$ and $\delta$.
\end{proof}

\begin{lemma}\label{lem:logdoublenit}
    Let $\left(t_1,t_2\right)\in\mathbb{R}^2$ be nonzero and linearly independent over $\mathbb{Q}$. For $\delta\in\left(0,\min{\left(\left|t_1\right|, \left|t_2\right|,\frac{1}{3}\right)}\right)$, $z, w\in\mathbb{T}$ and $Y\geq2$ sufficiently large depending on $t_1$, $t_2$ and $\delta$, we have
    $$ \sum_{\substack{n \le Y \\ \left(n^{it_1}, n^{it_2}\right)\in B_\delta(z)\times B_\delta(w)}} \frac{1}{n} \gg_{t_1,t_2}\delta^2\log{Y}. $$
\end{lemma}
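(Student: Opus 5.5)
Write $n=e^{u}$. Then $(n^{it_1},n^{it_2})=(e(t_1u/2\pi),e(t_2u/2\pi))$, and up to constants the measure $\frac{1}{n}$ on integers becomes Lebesgue measure $\mathrm{d}u$ on $[0,\log Y]$. The statement therefore reduces to a lower bound for the time that the linear flow $u\mapsto (t_1u,t_2u)/2\pi$ on $\mathbb{T}^2$ spends in the box $B_\delta(z)\times B_\delta(w)$. Since $t_1,t_2$ are linearly independent over $\mathbb{Q}$, this flow is uniquely ergodic (Kronecker--Weyl for continuous flows): for any nonzero $(k_1,k_2)\in\mathbb{Z}^2$ the frequency $k_1t_1+k_2t_2$ is nonzero, so $\frac{1}{U}\int_0^U e\left(\frac{(k_1t_1+k_2t_2)u}{2\pi}\right)\mathrm{d}u\to0$. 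Hence
$$\frac{1}{U}\,\Big|\{u\le U : (e(t_1u/2\pi),e(t_2u/2\pi))\in B_\delta(z)\times B_\delta(w)\}\Big|\to(2\delta)^2.$$
This is the continuous analogue of what Lemma~\ref{lem:logsinglenit} does by hand via $E$-adic blocks.

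The main work is transferring from the continuous integral to the discrete sum $\sum_n 1/n$, and I would do it as follows. I shrink the box to $B_{\delta/2}(z)\times B_{\delta/2}(w)$. For $n\ge C$, with $C$ large depending on $t_1,t_2,\delta$, the map $n\mapsto(n^{it_1},n^{it_2})$ moves by at most $\max(|t_1|,|t_2|)/n<\delta/10$ (in the arc-length normalisation) between $n$ and any $v\in[n,n+1]$. So whenever $e^{u}\in[n,n+1]$ lies in the half-size box, $n$ itself lies in the full box. This gives
$$\sum_{\substack{C<n\le Y\\ (n^{it_1},n^{it_2})\in B_\delta(z)\times B_\delta(w)}}\frac1n\ \ge\ \sum_{C<n\le Y}\int_n^{n+1}\mathbb{1}\big[(v^{it_1},v^{it_2})\in B_{\delta/2}(z)\times B_{\delta/2}(w)\big]\frac{\mathrm{d}v}{v},$$
using $\frac1n\ge\int_n^{n+1}\frac{\mathrm{d}v}{v}$. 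Substituting $v=e^{u}$, the right-hand side equals the measure of $\{u\in[\log(C+1),\log Y]:\ \text{flow}\in\text{half box}\}$. By unique ergodicity this is $\sim\delta^2\log Y$ (up to the fixed normalising constant) once $Y$ is large in terms of $t_1,t_2,\delta$. That yields the bound $\gg\delta^2\log Y$.

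If one prefers to avoid quoting unique ergodicity of the flow, I would smooth the indicator of the half-box from below by a product of Fejér-type trigonometric polynomials $\phi_z\otimes\phi_w$, each $\ge0$, supported in the arc, with integral $\gg\delta$ and of bounded degree depending on $\delta$. Each nonconstant frequency integrates to $O\big(1/|k_1t_1+k_2t_2|\big)=O_{t_1,t_2,\delta}(1)$ over $[0,\log Y]$, while the constant term contributes $\gg\delta^2\log Y$. This keeps the argument elementary, in the spirit of Lemma~\ref{lem:fourierapprox}.

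The main obstacle is the uniformity in the constants. The implied constant must depend only on $t_1,t_2$, while the threshold for $Y$ may depend on $\delta$ as well. Both approaches handle this, because the error terms are $O_{t_1,t_2,\delta}(1)$ and the constant-term main contribution is $\asymp\delta^2\log Y$. The discretisation step must be done by a pointwise inclusion as above, not by averaging, so that no loss depending on $\delta$ enters the main term beyond the factor from halving $\delta$.
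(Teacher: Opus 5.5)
Your argument is correct, but it obtains the two-dimensional equidistribution differently from the paper.

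\textbf{The paper's route.} The paper never uses the continuous flow. It looks only at integers $n$ with $\log n$ within $\pi\delta/\max(t_1,t_2)$ of the times $2\pi(\theta_1+k)/t_1$, at which $n^{it_1}=z$ exactly. Each such block contributes $\gg_{t_1,t_2}\delta$ to $\sum 1/n$. On the block, $n^{it_2}\in B_\delta(w)$ as soon as $d(k\,t_2/t_1-x_0,\mathbb{Z})$ is small. So the count of good $k\le X\asymp\log Y$ follows from equidistribution of the one-dimensional irrational rotation $k\mapsto k\,t_2/t_1$. This is exactly a Poincar\'e section of your flow, and the passage from integers to the continuum is done block by block rather than by your global pointwise inclusion with a halved box.

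\textbf{What each buys.} The paper needs only Weyl's theorem for a single irrational rotation. Yours needs Kronecker--Weyl for flows plus the approximation of box indicators by continuous functions. In exchange, your argument is shorter and gives an absolute implied constant; the paper's sub-box loses a factor $\min(|t_1|,|t_2|)/\max(|t_1|,|t_2|)$. Yours also gives the asymptotic $4\delta^2\log Y$ if you sandwich from both sides, and it extends verbatim to any finite $\mathbb{Q}$-linearly independent family $t_1,\dots,t_r$.

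\textbf{Fourier variant.} No nonzero trigonometric polynomial vanishes on an open arc, so a nonnegative Fej\'er-type polynomial supported in the arc does not exist. Use instead either of the following:
\begin{itemize}
\item a smooth bump supported in the half-box, with its Fourier series truncated at a degree $N(\delta)$ where the sup-norm tail is at most $c\delta^2$;
\item a Selberg--Vaaler minorant/majorant pair, combined via $\mathbb{1}_I(x)\mathbb{1}_J(y)\ge m_I(x)M_J(y)+M_I(x)m_J(y)-M_I(x)M_J(y)$.
\end{itemize}
Either way only finitely many frequencies $k_1t_1+k_2t_2\neq 0$ appear, and your $O_{t_1,t_2,\delta}(1)$ error holds uniformly in $(z,w)$.

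\textbf{Uniformity in the soft route.} A $Y$-threshold independent of $(z,w)$ does not follow from convergence for each fixed box. It does follow from uniform convergence of ergodic averages of continuous functions for uniquely ergodic flows: minorize the box by translates of one fixed continuous function. You should cite this explicitly.
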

\begin{proof}
    Taking conjugates of $z$ and $w$ if necessary, we can assume without loss of generality that $t_1,t_2>0$. Write $z=e\left(\theta_1\right)$ and $w=e\left(\theta_2\right)$ ($\left|\theta_1\right|, \left|\theta_2\right|\le\frac{1}{2}$), and define
    $$
    A(Y) \vcentcolon=
    \left\{ n\le Y :
    \begin{cases}
        \exists k\in\mathbb{Z},\ \left|\log{n}-\frac{2\pi\theta_1}{t_1}-\frac{2\pi k}{t_1}\right|<\frac{2\pi\delta}{t_1} \\
        \exists\ell\in\mathbb{Z},\ \left|\log{n}-\frac{2\pi\theta_2}{t_2}-\frac{2\pi\ell}{t_2}\right|<\frac{2\pi\delta}{t_2}
    \end{cases}
    \right\}.
    $$
    Then
    $$ \sum_{\substack{n \le Y \\ \left(n^{it_1}, n^{it_2}\right)\in B_\delta(z)\times B_\delta(w)}} \frac{1}{n} =\sum_{n\in A(Y)}\frac{1}{n}. $$
    Set $y \vcentcolon= \frac{t_2}{t_1}\in\mathbb{R}\backslash\mathbb{Q}$ and $x_0 \vcentcolon= \frac{t_2}{t_1}\theta_1 - \theta_2$, and define
    $$ B(Y) \vcentcolon= \left\{k\le\frac{t_1}{2\pi}(\log{Y}-\theta_1-\delta) : d\left(ky-x_0,\ \mathbb{Z}\right)<t_2\frac{\delta}{2\max{\left(t_1,t_2\right)}}\right\}. $$
    Then,
    \begin{equation*}
        \begin{aligned}
            \sum_{n\in A(Y)}\frac{1}{n}
            &\geq \sum_{k\in B(Y)}\sum_{\exp{\left(\frac{2\pi\theta_1}{t_1}+\frac{2\pi k}{t_1}-\frac{2\pi\delta}{2\max{\left(t_1,t_2\right)}}\right)}\le n\le\exp{\left(\frac{2\pi\theta_1}{t_1}+\frac{2\pi k}{t_1}+\frac{2\pi\delta}{2\max{\left(t_1,t_2\right)}}\right)}}\frac{1}{n} \\
            &\gg_{t_1,t_2} \delta\left|B(Y)\right|.
        \end{aligned}
    \end{equation*}
    Indeed, if $\left|\log n - \frac{2\pi\theta_1}{t_1}-\frac{2\pi k}{t_1}\right|<\frac{2\pi\delta}{2\max{(t_1,t_2)}}$ for some $k \in B(Y)$, then let $\ell \in \mathbb{Z}$ be such that $|ky-x_0-\ell|<t_2\frac{\delta}{2\max{\left(t_1,t_2\right)}}$, we have
    $$
    \left|\log{n}-\frac{2\pi\theta_2}{t_2}-\frac{2\pi\ell}{t_2}\right|
    \le \frac{2\pi\delta}{2\max{(t_1,t_2)}} + 2\pi\left| \frac{\theta_2}{t_2}-\frac{\theta_1}{t_1}+\frac{k}{t_1}-\frac{\ell}{t_2} \right|
    \le \frac{2\pi\delta}{t_2}.
    $$
    Next, write $B(Y)=\left\{k\le X : d\left(ky-x_0,\ \mathbb{Z}\right)<\eta\right\}$ for simplicity. It remains to prove that $B(Y)\gg\eta X$. This follows from the fact that $y$ is irrational, since an irrational rotation is equidistributed on the unit circle by Weyl’s criterion.
\end{proof}

\section{Reduction to the pretentious case}\label{sec:reduction}

Let $f$ and $g$ be as in Theorem~\ref{thm:main}. In what follows, all the implicit constants may depend on $z$, $w$, $f$ and $g$. Without loss of generality, we assume that $\varepsilon$ is sufficiently small depending on these quantities.

By hypothesis, we can consider an infinite set $\mathcal{N}\subset\mathbb{N}$ such that
$$ \lim\limits_{\substack{x\to+\infty \\ x\in\mathcal{N}}} \frac{1}{\log{x}} \sum_{n\in\mathcal{A}(x)} \frac{1}{n} = 0. $$

In the next proposition, we exclude the case when exactly one of the functions $f$ and $g$ is pseudo-pretentious.

\begin{proposition}\label{prop:only1pret}
    The functions $f$ and $g$ are either both non-pseudo-pretentious or both pseudo-pretentious.
\end{proposition}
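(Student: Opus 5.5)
We argue by contradiction. Suppose, without loss of generality, that $f$ is pseudo-pretentious and $g$ is non-pseudo-pretentious; the symmetric case is treated identically. The aim is to show that $\mathcal{A}(x)$ has logarithmic density bounded below by a positive constant (of order $\varepsilon^2$) along all large $x$, and in particular along $x\in\mathcal{N}$. This contradicts the choice of $\mathcal{N}$.

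First I would fix the structure of $f$. Let $k\ge1$ be minimal such that $f^k$ is pretentious, say $\mathbb{D}(f^k,\chi n^{it_0};\infty)<\infty$. Choosing a $k$-th root $h$ of $\widetilde{\chi}$ (completely multiplicative, values in roots of unity) and $t=t_0/k$, we get $\mathbb{D}(f,hn^{it};\infty)<\infty$ after possibly adjusting $h$ prime by prime. The natural plan is to first produce a positive-density set of $n$ with $f(n)\in B_\varepsilon(z)$, with some arithmetic control, and then use non-pretentiousness of $g$ to decorrelate $g(n+1)$.

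Concretely, I would write
\[
\sum_{n\in\mathcal{A}(x)}\frac1n \ge \sum_{n\le x}\frac{\mathbb{1}_{B_\varepsilon(z)}(f(n))\,\mathbb{1}_{B_\varepsilon(w)}(g(n+1))}{n},
\]
and expand $\mathbb{1}_{B_\varepsilon(w)}(g(n+1))$ by Lemma~\ref{lem:fourierapprox} with parameters $\delta$ and $X$. The main term $2\varepsilon\sum_{n\le x}\mathbb{1}_{B_\varepsilon(z)}(f(n))/n$ is $\gg\varepsilon^2\log x$, by the following lower bound for the density of $\{n: f(n)\in B_\varepsilon(z)\}$. Since $f$ is dense in $\mathbb{T}$, pick $m_0$ with $f(m_0)$ close to $z$. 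For $n=m_0 m$ with $m$ restricted suitably, $f(n)=f(m_0)f(m)$, and Lemma~\ref{prop:concentration} shows that $f(m)$ is near $\chi(a)(Qm)^{it}\exp(F(Q,Y))$ on average. If $t\neq0$, Lemma~\ref{lem:logsinglenit} lets us steer the phase; if $t=0$, the density of $f$ together with the concentration yields a set of positive log density. Crude bounds suffice here, since we only need some positive constant $c(\varepsilon)$.

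The exceptional $n$, where $g(n+1)$ lies within $\delta$ of the arc endpoints, have log density $O(\delta)$ by Lemma~\ref{equidistnonpret} applied to $g$. The error term is $O(1/(\delta X))$.

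For the oscillating terms with $1\le|j|\le X$, I must bound
\[
\frac{1}{\log x}\sum_{n\le x}\frac{\mathbb{1}_{B_\varepsilon(z)}(f(n))\,g(n+1)^j}{n}.
\]
Here I expand $\mathbb{1}_{B_\varepsilon(z)}(f(n))$ in Fourier series as well, again via Lemma~\ref{lem:fourierapprox}, with exceptional set of density $O(\delta)$ handled through concentration of the pretentious $f$ (or by slightly shrinking and enlarging arcs with smooth majorants). This reduces the task to correlations $\frac{1}{\log x}\sum_{n\le x} f(n)^{\ell}g(n+1)^j/n$. Since $g$ is non-pseudo-pretentious, each $g^j$ is non-pretentious, and for fixed $j$ the function $g^j$ is $(A,x)$-non-pretentious for large $x$ by compactness of the finitely many relevant characters and the standard argument. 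Lemma~\ref{lem:logtao}, with $Q_1=Q_2=1$, $a_1=0$, $a_2=1$, then makes every such correlation $\le\eta$ once $x$ is large, for the finitely many $(\ell,j)$ with $|\ell|,|j|\le X$. Choosing $\delta$ a small multiple of $\varepsilon^2$, then $X$, then $\eta$, and then $x\in\mathcal{N}$ large, gives $\frac{1}{\log x}\sum_{n\in\mathcal{A}(x)}1/n\gg\varepsilon^2$, the desired contradiction.

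The main obstacle is establishing $(A,x)$-non-pretentiousness of $g^j$ uniformly with the correct quantifier order in Lemma~\ref{lem:logtao}, that is, handling $|t|\le Ax$ growing with $x$. For this I would mimic the Vinogradov--Korobov argument in Lemma~\ref{lem:nonpret}: large $|t|$ is handled by the zero-free region, and small $|t|$ and characters of bounded period by compactness together with $\mathbb{D}(g^j,\chi' n^{it};\infty)=\infty$.
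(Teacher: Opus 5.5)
There is a genuine gap, and it is exactly at the step you flag as the main obstacle: the claim that non-pseudo-pretentiousness of $g$ makes each $g^j$ $(A,x)$-non-pretentious for all large $x$. This is false in general. Knowing $\mathbb{D}(g^j,\chi n^{it};\infty)=\infty$ for each fixed $\chi,t$ gives, by compactness, a lower bound uniform only over $|t|$ in a \emph{fixed} bounded range. It gives nothing for $|t|\le Ax$ when $t$ grows with $x$.

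The function of Section~\ref{sec:counterex} is an explicit counterexample. It is non-pseudo-pretentious: for fixed $K,\chi,\tau$, the primes in $(e^{\sqrt{s_{m+1}}},t_{m+1}]$ alone contribute $\gg\log s_{m+1}$ to $\mathbb{D}^2(f^K,\chi n^{i\tau};\infty)$. Yet $\mathbb{D}(f^j,n^{ijs_{m+1}};t_{m+1})\le j\,\mathbb{D}(f,n^{is_{m+1}};t_{m+1})\to 0$, with $j s_{m+1}=j\log t_{m+1}\le t_{m+1}$. So $f^j$ fails to be $(A,t_{m+1})$-non-pretentious for every $A\ge1$.

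The Vinogradov--Korobov argument of Lemma~\ref{lem:nonpret} does not transfer. It works only because $h^c$ is $\mu_K$-valued, so that $\mathbb{D}(h^c,\chi' n^{it'};x)\ge\frac1K\mathbb{D}(1,(\chi')^Kn^{it'K};x)$ reduces everything to an $L$-function bound independent of $h$. A $g$ with dense image admits no such reduction. It can agree with $n^{it_x}$ on all primes up to $x$ along a sequence $t_x\to\infty$, and you have no control over whether $\mathcal{N}$ hits such scales.

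Consequently, your treatment of the modes $f^\ell(n)g^j(n+1)$ breaks down. For $k\nmid\ell$ you can repair it by letting $f^\ell$ be the $(A,x)$-non-pretentious factor in Lemma~\ref{lem:logtao}, which is exactly what Lemma~\ref{lem:nonpret} provides. But for $\ell\in k\mathbb{Z}\setminus\{0\}$ and $j\neq0$, $f^\ell$ is pretentious and $g^j$ need not be $(A,x)$-non-pretentious, so Lemma~\ref{lem:logtao} gives nothing.

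The missing idea, which is how the paper proceeds, is to make the quantitative non-pretentiousness always come from the pseudo-pretentious side.
\begin{itemize}
\item Restrict to $n_\ell=W_2W_1(W_1\ell+1)$ with $q\prod_{p\le y}p\mid W_1$, so that $\widetilde{\chi}(W_1\ell+1)=1$.
\item By Lemma~\ref{prop:concentration}, after passing to a subsequence $Y_j$ that freezes the drifting phase $\exp(iI(y,Y))$, and with $W_2$ a power of a small prime chosen to rotate onto $z$, one gets $f(n_\ell)=z\,h(W_1\ell+1)\ell^{it}+O(\varepsilon^3)$ for most $\ell$.
\item Expand in $\ell^{ita}$, in $h^b(W_1\ell+1)$ (by orthogonality in $\mu_{k\varphi(q)}$), and in $g^c(n_\ell+1)$.
\item The modes with $k\nmid b$ are killed by Lemma~\ref{lem:logtao}, with the root-of-unity-valued $h^b$ as the $(A,x)$-non-pretentious factor.
\item For $k\mid b$ the $h$-factor is identically $1$. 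The mode becomes a one-point average of $g^c(n_\ell+1)(n_\ell+1)^{ita}$ along a progression, which Lemma~\ref{lem:loghalasz} kills using only the qualitative non-pretentiousness of $g^c$.
\item The surviving $c=0$ term is $\gg\varepsilon^2$ by Lemma~\ref{lem:logsinglenit}.
\end{itemize}
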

\begin{proof}
    Assume, for the sake of contradiction, and without loss of generality, that $g$ is non-pseudo-pretentious and that there exists a minimal positive integer $k$, a completely multiplicative function $h:\mathbb{N}\to\mathbb{T}$ such that $h^k=\widetilde{\chi}$, where $\chi$ is a Dirichlet character mod $q$, and a real number $t$ , such that $\mathbb{D}\left(f,hn^{it};\infty\right)<\infty$.

    Let $1\le y\le Y$ with $y$ (resp. $Y$) sufficiently large in terms of $\varepsilon^{-1}$ (resp. $y$). The dependence between the parameters may be summarized as follows:
    $$ z,w,f,g,h,k,t,\chi,q \lll \varepsilon^{-1} \lll y \lll Y. $$
    
    Set $W_1 \vcentcolon= 2q\prod_{p\le y} p$, and take an integer $W_2 \ge 1$ such that $P^+(W_2) \le y$, which depends at most on $y$ and the underlying quantities. We take $y$ such that $P^+(q)\le y$. For $\ell\geq1,$ define $n_\ell=W^\prime\ell+n_0 \vcentcolon= W_2W_1(W_1\ell+1)$. By the pigeonhole principle, there exists an integer $R \in [n_0,n_0+W'-1]$ such that for infinitely many integers $Y \ge 1$, we have $W^\prime Y+R\in\mathcal{N}$. From now on, $Y$ is supposed to satisfy this condition.
    
    Take $y$ and $Y$ sufficiently large, and apply Lemma~\ref{prop:concentration} to the pretentious multiplicative function $f\overline{h}$ to obtain
    \begin{equation}\label{prop:only1pret-1}
        \begin{multlined}
            \frac{1}{\log Y} \sum_{\substack{\ell \le Y}} \frac{\left|f(W_1\ell+1)-h(W_1\ell+1)\left(W_1\ell\right)^{it}\exp{\left(iI(y,Y)+O\left(\varepsilon^7\right)\right)}\right|}{\ell}
            \le\varepsilon^6,
        \end{multlined}
    \end{equation}
    where
    $$ I(y,Y) \vcentcolon= \sum_{y<p\le Y}{\frac{\mathfrak{I}{\left(f(p)\overline{h(p)}p^{-it}\right)}}{p}}. $$
    By a compactness argument, restricting $Y$ to belong to an increasing subsequence $\left(Y_j\right)_{j\geq1}$ (which depends on $y$), we can assume that $\exp{\left(iI(y,Y_j)\right)}\rightarrow\gamma$ for some $\gamma\in\mathbb{T}$ depending on $y$. Then, for $j$ sufficiently large in terms of $y$ and the underlying quantities, we have $\exp{\left(iI(y,Y_j)\right)}=\gamma+O\left(\varepsilon^7\right)$, hence \eqref{prop:only1pret-1} yields
    $$ \frac{1}{\log Y_j} \sum_{\substack{\ell \le Y_j}} \frac{\left|f(W_1\ell+1)-h(W_1\ell+1)\left(W_1\ell\right)^{it}\gamma\right|}{\ell} \le2\varepsilon^6. $$
    Now, define
    $$
    \mathcal{I}_j \vcentcolon=
    \left\{\ell\le Y_j : \left|f(W_1\ell+1)-h(W_1\ell+1)\left(W_1\ell\right)^{it}\gamma\right|\le2\varepsilon^3\right\}.
    $$
    Then, by Rankin’s trick, we have
    \begin{equation}\label{prop:only1pret-2}
        \begin{aligned}
            \frac{1}{\log Y_j} \sum_{\substack{\ell \le Y_j \\ \ell\notin\mathcal{I}_j}} \frac{1}{\ell}
            \le \frac{1}{2\varepsilon^3\log{Y_j}} \sum_{\substack{\ell \le Y_j}} \frac{\left|f(W_1\ell+1)-h(W_1\ell+1)\left(W_1\ell\right)^{it}\gamma\right|}{\ell}
            \le \varepsilon^3.
        \end{aligned}
    \end{equation}
    In addition, for all $\ell\in\mathcal{I}_j$, we have
    $$ f(n_\ell)=f(W_2)f(W_1)h(W_1\ell+1)W_1^{it}\gamma\ell^{it}+O\left(\varepsilon^3\right). $$
    By density of $\left\{f(n)\right\}_n$, let us choose $W_2$ such that  $f(W_2) = z\overline{f(W_1)W_1^{it}\gamma}+O(\varepsilon^3)$. This is possible despite the restriction $P^+(W_2) \le y$, since we can take $y$ sufficiently large so that there exists a prime $p \le y$ such that the set $\{f(p^n)\}_{n \ge 1}$ is at distance at most $\varepsilon^3$ from every point of $\mathbb{T}$. Thus, it suffices to fix such a $p$, take $W_2=p^n$, and choose $n$ in terms of $W_1$, $f$ and $t$. Therefore,
    \begin{equation}\label{prop:only1pret-3}
        f(n_\ell) = zh(W_1\ell+1)\ell^{it}+O\left(\varepsilon^3\right).
    \end{equation}
    Next, define
    $$
    \mathcal{I}_j^\prime \vcentcolon= 
    \left\{
    \ell \in \mathcal{I}_j :
    \begin{cases}
        n_\ell\notin\mathcal{A}\left(W'Y_j+R\right) \\
        \ell^{it}\notin B_{\varepsilon^3}\left(e\left(\pm\frac{\varepsilon}{10}\right)\right) \\
        g(n_\ell+1)\notin B_{\varepsilon^3}\left(we\left(\pm\frac{\varepsilon}{10}\right)\right)
    \end{cases}
    \right\}.
    $$
    By the union bound, Lemma~\ref{lem:logsinglenit} and \eqref{prop:only1pret-2}, we have
    \begin{equation*}
        \begin{aligned}
            \frac{1}{\log Y_j} \sum_{\substack{\ell \le Y_j \\ \ell\notin\mathcal{I}_j'}} \frac{1}{\ell}
            \le & \frac{1}{\log Y_j} \sum_{\substack{\ell \le Y_j  \\n_\ell\in\mathcal{A}\left(W'Y_j+R\right)}} \frac{1}{\ell} \\
            &+ \frac{1}{\log{Y_j}} \sum_{0\le r\le\log{Y_j}} \frac{1}{e^r} \sum_{\substack{e^r\le\ell\le e^{r+1}}} \mathbb{1}_{B_{\varepsilon^3}\left(we\left(\pm\frac{\varepsilon}{10}\right)\right)}\left(g(n_\ell+1)\right)
            + O\left(\varepsilon^3\right).
        \end{aligned}
    \end{equation*}
    Take $j$ sufficiently large in terms of $W'$ and $\varepsilon$. Using Lemma~\ref{equidistnonpret}, we deduce that
    \begin{equation}\label{prop:only1pret-4}
        \begin{aligned}
            \frac{1}{\log Y_j} \sum_{\substack{\ell \le Y_j \\ \ell\notin\mathcal{I}_j'}} \frac{1}{\ell}
            &\ll \frac{W'}{\log{Y_j}} \sum_{n\in\mathcal{A}\left(W'Y_j+R\right)} \frac{1}{n} \\
            &\quad + \frac{1}{\log{Y_j}} \sum_{0\le r\le\log{Y_j}} \frac{1}{e^r} \sum_{\substack{\ell\le e^{r+1}}} \mathbb{1}_{B_{\varepsilon^3}\left(we\left(\pm\frac{\varepsilon}{10}\right)\right)}\left(g(n_\ell+1)\right)
            + \varepsilon^3 \\
            &\ll \varepsilon^3,
        \end{aligned}
    \end{equation}
    since $W'Y_j+R\in\mathcal{N}$.

    Set
    $$ S_j \vcentcolon= \frac{1}{\log{Y_j}}\sum_{\ell\in\mathcal{I}_j^\prime}\frac{\mathbb{1}_{B_\frac{\varepsilon}{10}(1)}\left(\ell^{it}\right) \mathbb{1}_{h(W_1\ell+1)=1} \mathbb{1}_{B_\frac{\varepsilon}{10}(w)}\left(g(n_\ell+1)\right)}{\ell}. $$
    Recalling \eqref{prop:only1pret-3} and the definition of $\mathcal{A}\left(W'Y_j+R\right)$, since $\varepsilon$ is assumed to be sufficiently small, note that $S_j=0$. Now, by definition of $\mathcal{I}_j^\prime$ and applying Lemma~\ref{lem:fourierapprox}, as well as the identity
    $$ \mathbb{1}_{h(W_1\ell+1)=1}=\frac{1}{k\varphi(q)}\sum_{b=0}^{k\varphi(q)-1}{h^b(W_1\ell+1)}, $$
    which follows from the fact that $h(W_1\ell+1)\in\mu_{k\varphi(q)}$, we obtain for $X\geq1$,
    \begin{equation}\label{prop:only1pret-4.5}
        \begin{multlined}
            S_j
            = \sum_{\left|a\right|,|c|\le X}\sum_{b=0}^{k\varphi(q)-1}\frac{\sin{\left(2\pi a\frac{\varepsilon}{10}\right)}\sin{\left(2\pi c\frac{\varepsilon}{10}\right)}}{\pi^2ac  w^c k\varphi(q)} \frac{1}{\log{Y_j}}\sum_{\ell\in\mathcal{I}_j^\prime}\frac{\ell^{ita}h^b(W_1\ell+1)g^c(n_\ell+1)}{\ell} \\
            + O\left( \frac{1}{\varepsilon^3X} \right).
        \end{multlined}
    \end{equation}
    Assume that $X \le W_1$. Writing $\ell^{ita} = \left(\frac{W_1\ell+1}{W_1}\right)^{ita} + O\left(\frac{X}{W_1}\right)$, and then applying \eqref{prop:only1pret-4}, we deduce that
    \begin{equation*}
        \begin{multlined}
            S_j
             = \sum_{\left|a\right|,|c|\le X}\sum_{b=0}^{k\varphi(q)-1}\frac{\sin{\left(2\pi a\frac{\varepsilon}{10}\right)}\sin{\left(2\pi c\frac{\varepsilon}{10}\right)}}{\pi^2acW_1^{ita}  w^ck\varphi(q)} \\
             \qquad\qquad\qquad\qquad \times \frac{1}{\log{Y_j}}\sum_{\ell\le Y_j}\frac{(W_1\ell+1)^{ita}h^b(W_1\ell+1)g^c(n_\ell+1)}{\ell} \\
             + O\left(\frac{1}{\varepsilon^3X}+\left(\varepsilon^3+\frac{X}{W_1}\right)\left(\log{X}\right)^2\right),
        \end{multlined}
    \end{equation*}
    Let $\delta>0$ to be chosen later. If $k\nmid b$, apply Lemma~\ref{lem:nonpret} to $h^b$. Take $j$ sufficiently large in terms of $y$ and the underlying quantities, and on $\delta$ and $X$. Then, Lemma~\ref{lem:logtao} implies that for all integers $b \in [0,k\varphi(q)-1]$ such that $k \nmid b$, and for all $|c| \le X$, we have
    $$
    \left| \frac{1}{\log{Y_j}}\sum_{\ell\le Y_j}\frac{(W_1\ell+1)^{ita}h^b(W_1\ell+1)g^c(n_\ell+1)}{\ell} \right|
    \le \delta.
    $$
    Thus,
    \begin{equation*}
        \begin{multlined}
            S_j
            = \sum_{\left|a\right|,|c|\le X}\sum_{d=0}^{\varphi(q)-1}\frac{\sin{\left(2\pi a\frac{\varepsilon}{10}\right)}\sin{\left(2\pi c\frac{\varepsilon}{10}\right)}}{\pi^2acW_1^{ita}  w^c k\varphi(q)} \\
            \qquad\qquad\qquad\qquad \times \frac{1}{\log{Y_j}}\sum_{\ell\le Y_j}\frac{(W_1\ell+1)^{ita}\chi^d(W_1\ell+1)g^c(n_\ell+1)}{\ell} \\
            + O\left(\frac{1}{\varepsilon^3X}+\left(\delta+\varepsilon^3+\frac{X}{W_1}\right)\left(\log{X}\right)^2\right).
        \end{multlined}
    \end{equation*}
    Then, since $\chi(W_1\ell+1)=1$, we obtain
    \begin{equation*}
        \begin{multlined}
            S_j
            = \sum_{\left|a\right|,|c|\le X}{\frac{\sin{\left(2\pi a\frac{\varepsilon}{10}\right)}\sin{\left(2\pi c\frac{\varepsilon}{10}\right)}}{\pi^2ac\left(W_2W_1^2\right)^{ita}  w^ck}\frac{1}{\log{Y_j}}\sum_{\ell\le Y_j}\frac{(n_\ell+1)^{ita}g^c(n_\ell+1)}{\ell}} \\
            + O\left(\frac{1}{\varepsilon^3X}+\left(\delta+\varepsilon^3+\frac{X}{W_1}\right)\left(\log{X}\right)^2\right).
        \end{multlined}
    \end{equation*}
    Now, $g^c$ is non-pretentious for $c\neq0$, hence by Lemma~\ref{lem:loghalasz}, taking $j$ sufficiently large in terms of $X$, as well as $y$ and the underlying quantities, we have
    \begin{equation*}
        \begin{multlined}
            S_j
            = \frac{\varepsilon}{5k}\sum_{\left|a\right|\le X}{\frac{\sin{\left(2\pi a\frac{\varepsilon}{10}\right)}}{\pi a\left(W_2W_1^2\right)^{ita}}\frac{1}{\log{Y_j}}\sum_{\ell\le Y_j}\frac{(n_\ell+1)^{ita}}{\ell}} \\
            + O\left(\frac{1}{\varepsilon^3X}+\left(\delta+\varepsilon^3+\frac{X}{W_1}\right)\left(\log{X}\right)^2\right).
        \end{multlined}
    \end{equation*}
    We use \eqref{prop:only1pret-4} and note that $\left(\frac{n_\ell+1}{W_2W_1^2}\right)^{ita} = \ell^{it} + O\left(\frac{X}{W_1}\right) $ to deduce that
    \begin{equation*}
        \begin{multlined}
            S_j
            = \frac{\varepsilon}{5k}\frac{1}{\log{Y_j}}\sum_{\ell\in\mathcal{I}_j^\prime}{\frac{1}{\ell}\sum_{\left|a\right|\le X}{\frac{\sin{\left(2\pi a\frac{\varepsilon}{10}\right)}}{\pi a }\ell^{ita}}}
            + O\left(\frac{1}{\varepsilon^3X}+\left(\delta+\varepsilon^3+\frac{X}{W_1}\right)\left(\log{X}\right)^2\right).
        \end{multlined}
    \end{equation*}
    Then, apply Lemma~\ref{lem:fourierapprox} to obtain
    \begin{equation*}
        \begin{aligned}
            S_j
            &= \frac{\varepsilon}{5k}\frac{1}{\log{Y_j}}\sum_{\ell\le Y_j}\frac{\mathbb{1}_{B_\frac{\varepsilon}{10}(1)}\left(\ell^{it}\right)}{\ell}
            + O\left(\frac{1}{\varepsilon^3X}+\left(\delta+\varepsilon^3+\frac{X}{W_1}\right)\left(\log{X}\right)^2\right).
        \end{aligned}
    \end{equation*}
    Choose $X=\frac{1}{\varepsilon^6}$ and $\delta=\varepsilon^3$, and take $y$ sufficiently large in terms of $\varepsilon$. Then, by Lemma~\ref{lem:logsinglenit} (the inequality is trivial when $t=0$), we have
    $$ S_j\gg\varepsilon^2, $$
    which contradicts $S_j=0$.
\end{proof}

In order to prove that $f$ and $g$ are pseudo-pretentious, we will use the following result of Klurman and Mangerel \cite[Proposition 2.2]{klurman2018orbits}.

\begin{lemma}\label{lem:impliespret}
    Let $f,g:\mathbb{N}\to\mathbb{T}$ be multiplicative functions. Assume that there are positive integers $a$ and $b$ such that
    $$ \frac{1}{\log x} \sum_{n \le x} \frac{f^a(n)g^b(n+1)}{n} \gg 1. $$
    Then, $f$ and $g$ are pseudo-pretentious.
\end{lemma}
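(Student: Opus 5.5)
The plan is to argue by contraposition using Tao's logarithmic two-point estimate (Lemma~\ref{lem:logtao}). We read the hypothesis as saying that there is a constant $c_0>0$ and arbitrarily large $x$ with
$$\Bigl|\frac{1}{\log x}\sum_{n\le x}\frac{f^a(n)g^b(n+1)}{n}\Bigr|\ge c_0 .$$
Suppose, say, that $g$ is not pseudo-pretentious. Then in particular $g^b$ is non-pretentious, i.e. $\mathbb{D}(g^b,\chi n^{it};\infty)=\infty$ for every Dirichlet character $\chi$ and every $t\in\mathbb{R}$.

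The main step, which I expect to be the real obstacle, is to upgrade this asymptotic non-pretentiousness to the non-asymptotic form required by Lemma~\ref{lem:logtao}. The claim is that for every $A$ there is $x_0(A)$ such that $g^b$ is $(A,x)$-non-pretentious for all $x\ge x_0(A)$. Suppose not. Then for some fixed $A$ there are arbitrarily large $x$ together with a character $\chi_x$ of period $\le A$ and some $|t_x|\le Ax$ such that $\mathbb{D}(g^b,\chi_x n^{it_x};x)<A$.
\begin{itemize}
    \item Since there are only finitely many characters of period $\le A$, by pigeonhole we may fix $\chi_x=\chi$ along a sequence $x_1<x_2<\cdots$.
    \item For $i<j$, monotonicity of $\mathbb{D}(\cdot\,;x)$ in $x$ gives $\mathbb{D}(g^b,\chi n^{it_{x_j}};x_i)<A$. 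The triangle inequality then yields
    $$\mathbb{D}\bigl(1,n^{i(t_{x_i}-t_{x_j})};x_i\bigr)<2A$$
    (the character contributes only at the finitely many primes dividing its modulus).
    \item The standard lower bound $\mathbb{D}^2(1,n^{iu};x)\ge \log\bigl(1+|u|\log x\bigr)-O(\log\log(|u|+3))-O(1)$, which comes from the Vinogradov--Korobov zero-free region exactly as in the proof of Lemma~\ref{lem:nonpret}, then forces $|t_{x_i}-t_{x_j}|\ll_A 1/\log x_i$.
    \item Hence $(t_{x_i})$ is Cauchy and converges to some $t^\ast$ with $|t_{x_i}-t^\ast|\ll_A 1/\log x_i$. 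This gives $\mathbb{D}(n^{it_{x_i}},n^{it^\ast};x_i)=O_A(1)$.
    \item Consequently $\mathbb{D}(g^b,\chi n^{it^\ast};x_i)\le A+O_A(1)$ for all $i$. Letting $i\to\infty$ and using monotonicity in $x$, we get $\mathbb{D}(g^b,\chi n^{it^\ast};\infty)<\infty$, contradicting non-pretentiousness.
\end{itemize}

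With the claim in hand, apply Lemma~\ref{lem:logtao} with first function $f^a$ at $(Q_1,a_1)=(1,0)$ and second function $g^b$ at $(Q_2,a_2)=(1,1)$. Here $a_2Q_1-a_1Q_2=1\neq0$. Taking $\varepsilon=c_0/2$, $A\ge A_0$, and $x\ge x_0(A)$, the logarithmic correlation is at most $c_0/2$ for all large $x$. This contradicts the hypothesis, so $g$ must be pseudo-pretentious.

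For $f$ the argument is symmetric: Lemma~\ref{lem:logtao} only needs one of the two functions to be non-pretentious, and we may assign $g^b$ to $(Q_1,a_1)=(1,1)$ and $f^a$ to $(Q_2,a_2)=(1,0)$, where $a_2Q_1-a_1Q_2=-1\neq0$. If $f$ were not pseudo-pretentious, then $f^a$ would be non-pretentious, the same upgrade applies to $f^a$, and we reach the same contradiction. Therefore $f$ and $g$ are both pseudo-pretentious (indeed $f^a$ and $g^b$ are pretentious).
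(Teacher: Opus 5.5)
\textbf{There is a genuine gap.} Your ``upgrade'' claim is not just unproved; it is false. The condition $\mathbb{D}(G,\chi n^{it};\infty)=\infty$ for every fixed $\chi,t$ does not imply that $G$ is $(A,x)$-non-pretentious for all large $x$.

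\textbf{Where it fails.} The faulty step is the third bullet. There $u=t_{x_i}-t_{x_j}$ can be as large as $2Ax_j$, and $x_j$ is arbitrarily large compared with $x_i$. But $\mathbb{D}^2(1,n^{iu};x_i)\le 2\log\log x_i+O(1)$ always, so the lower bound you quote cannot hold in that range. The correct Vinogradov--Korobov bound, $\mathbb{D}^2(1,n^{iu};x)\ge\log\log x-(\frac23+o(1))\log\log|u|-O(1)$, says nothing once $\log|u|\ge(\log x)^{3/2}$. By Kronecker's theorem there are arbitrarily large $u$ with $\mathbb{D}(1,n^{iu};x_i)$ arbitrarily small, so nothing ties $t_{x_j}$ to $t_{x_i}$.

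\textbf{A counterexample.} The function $f$ of Section~\ref{sec:counterex} is non-pseudo-pretentious but fails your upgrade.
\begin{enumerate}
\item On each block $(t_m,t_{m+1}]$ one has $f^k(p)\overline{\chi(p)}p^{-it}=\overline{\chi(p)}p^{i(ks_{m+1}-t)}$. For large $m$ this block contributes at least $\log s_{m+1}-O(\log\log s_{m+1})$ to $\mathbb{D}^2(f^k,\chi n^{it};\infty)$, so $f$ is non-pseudo-pretentious.
\item Yet $\mathbb{D}^2(f,n^{is_{m+1}};t_{m+1})\le\sum_{p\le t_m}\frac{1}{t_m^2p}=o(1)$ with $s_{m+1}=\log t_{m+1}$. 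So $f$ is not $(A,t_{m+1})$-non-pretentious for any $A$.
\item The range $[s_{m+1}^2,t_{m+1}]$ carries all but $O(\log s_{m+1}/s_{m+1})$ of the logarithmic weight up to $t_{m+1}=e^{s_{m+1}}$. The computation of Section~\ref{sec:counterex} therefore gives $\frac{1}{\log x}\sum_{n\le x}\frac{f(n)\overline{f(n+1)}}{n}=1+o(1)$ at $x=t_{m+1}$.
\end{enumerate}
Take $g=\overline{f}$ and $a=b=1$. Your reading of the hypothesis (a lower bound along some sequence of $x$) then holds, while neither function is pseudo-pretentious. So the statement as you read it is false, and no argument along your lines can work. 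This is the classical obstruction to the pointwise form of Elliott's conjecture, and it is why Lemma~\ref{lem:logtao} carries the uniform $(A,x)$ hypothesis.

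\textbf{How to repair it.} The lemma is Klurman--Mangerel's Proposition~2.2, which the paper cites rather than proves. It must be read with the lower bound holding for all sufficiently large $x$. That is also how the paper uses it in Theorem~\ref{thm:redpret}, where it first argues that the bound holds for all large $x$. With that reading, drop the upgrade and argue scale by scale.
\begin{enumerate}
\item By Lemma~\ref{lem:logtao}, for each large $x$ there are $\chi_x$ of period $\le A$ and $|t_x|\le Ax$ with $\mathbb{D}(g^b,\chi_x n^{it_x};x)<A$.
\item Compare only comparable scales $x_k=x_0^{2^k}$ and $x_{k+1}=x_k^2$. The triangle inequality gives $\mathbb{D}(\chi_{x_k}n^{it_{x_k}},\chi_{x_{k+1}}n^{it_{x_{k+1}}};x_k)<2A$, with $|t_{x_k}-t_{x_{k+1}}|\le 2Ax_k^2$.
\item Vinogradov--Korobov for $\zeta$ or $L(s,\chi_{x_k}\overline{\chi_{x_{k+1}}})$ (modulus $\le A^2$) shows that this distance squared is at least $\frac13\log\log x_k-O_A(1)$. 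The only exception is when the two characters are induced by the same primitive character and $|t_{x_k}-t_{x_{k+1}}|\ll_A 1/\log x_k$. The trivial bound $|L(1+\sigma+iv)|\ll\log(|v|+3)$ would not suffice here, since $|v|$ can be a power of $x_k$.
\item These increments are summable. This gives $\chi^*,t^*$ with $\mathbb{D}(g^b,\chi^* n^{it^*};x_k)=O_A(1)$ for all $k$, so $g^b$ is pretentious.
\end{enumerate}
The case of $f^a$ is symmetric, and your bookkeeping with $a_2Q_1-a_1Q_2=\pm1$ is fine there.
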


We now reduce the problem to the pretentious case.

\begin{theorem}\label{thm:redpret}
    The functions $f$ and $g$ are pseudo-pretentious.
\end{theorem}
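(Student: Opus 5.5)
By Proposition~\ref{prop:only1pret}, it suffices to rule out the case where $f$ and $g$ are both non-pseudo-pretentious. Assume this for contradiction. The plan is to run the Fourier expansion argument of Proposition~\ref{prop:only1pret}, which here becomes simpler because no concentration inequality is needed. Every mixed correlation will be killed by Lemma~\ref{lem:impliespret} together with Halász's theorem.

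\textbf{Step 1: Fourier expansion.} Let $\mathcal{N}$ be the infinite set along which $\frac{1}{\log x}\sum_{n\in\mathcal{A}(x)}\frac1n\to 0$. For $x\in\mathcal{N}$, consider
$$ S(x) := \frac{1}{\log x}\sum_{n\le x}\frac{\mathbb{1}_{B_{\varepsilon/10}(z)}(f(n))\,\mathbb{1}_{B_{\varepsilon/10}(w)}(g(n+1))}{n}, $$
which is $\ll \frac{1}{\log x}\sum_{n\in\mathcal{A}(x)}\frac1n \to 0$ along $\mathcal{N}$. To apply Lemma~\ref{lem:fourierapprox}, I discard the $n$ for which $f(n)\overline z$ or $g(n+1)\overline w$ lies in a $\delta$-neighbourhood of $e(\pm\varepsilon/10)$. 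By Lemma~\ref{equidistnonpret} (with $Q=1$, and with $Q=1,a=1$ after the shift $n\mapsto n+1$, which changes logarithmic averages by $o(1)$), this exceptional set has logarithmic density $O(\delta)$. Expanding both indicators with cutoff $X$ then gives
$$ S(x) = \sum_{|a|,|c|\le X} \frac{\sin(2\pi a\varepsilon/10)\sin(2\pi c\varepsilon/10)}{\pi^2 ac\, z^a w^c}\, \frac{1}{\log x}\sum_{n\le x}\frac{f^a(n)g^c(n+1)}{n} + O\!\left(\frac{1}{\delta X}+\delta(\log X)^2\right). $$

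\textbf{Step 2: the terms of the sum.} The term $a=c=0$ contributes $(\varepsilon/5)^2$. If exactly one of $a,c$ is zero, the correlation reduces to a logarithmic mean of $f^a$ or $g^c$. These functions are non-pretentious, since $f,g$ are non-pseudo-pretentious, so Lemma~\ref{lem:loghalasz} makes these terms $o(1)$. If $a,c\neq0$, I replace $f$ by $\overline f$ or $g$ by $\overline g$ as needed, which preserves unimodularity and complete multiplicativity, so that both exponents are positive. Lemma~\ref{lem:impliespret} then says that if the correlation were $\gg 1$ along some subsequence, $f$ and $g$ would be pseudo-pretentious, a contradiction.

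This step is the main obstacle. Lemma~\ref{lem:impliespret} is stated with a lower bound, and I read it as: if $\limsup_x \bigl|\frac{1}{\log x}\sum_{n\le x}\frac{f^a(n)g^b(n+1)}{n}\bigr|>0$, then the conclusion holds. This is the form of \cite[Proposition 2.2]{klurman2018orbits}, and the absolute value is harmless. If one insists on the literal statement, one argues along the subsequence where $|\cdot|\ge\eta$ and rotates by a constant. Under this reading, every term with $a,c\neq 0$ tends to $0$ as $x\to\infty$. As a backup, Tao's Lemma~\ref{lem:logtao} handles these terms directly: since $g^c$ is non-pretentious, it is $(A,x)$-non-pretentious for large $x$, because $\mathbb{D}(g^c,\chi n^{it};x)\to\infty$ uniformly in bounded-modulus $\chi$ and in $|t|\le Ax$. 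The uniformity in $t$ is the delicate point, and it is exactly what Lemma~\ref{lem:impliespret} packages.

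\textbf{Step 3: conclusion.} With $X$ and $\delta$ fixed and only finitely many terms, letting $x\to\infty$ in $\mathcal{N}$ gives
$$ 0=\lim_{x\in\mathcal N} S(x) = (\varepsilon/5)^2 + O\!\left(\tfrac{1}{\delta X}+\delta(\log X)^2\right). $$
Choosing $\delta=\varepsilon^3$ and $X=\varepsilon^{-6}$, with $\varepsilon$ small, gives a contradiction. Hence $f$ and $g$ are not both non-pseudo-pretentious, and by Proposition~\ref{prop:only1pret} they are both pseudo-pretentious.
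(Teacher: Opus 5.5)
Your Steps 1 and 3 follow the paper's argument, recast in contrapositive form. Your handling of the arc endpoints via Lemma~\ref{equidistnonpret} is more careful than the paper's.

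The gap is in Step 2, for the mixed terms $a,c\neq0$. You claim each such correlation tends to $0$, and this rests on a $\limsup$ reading of Lemma~\ref{lem:impliespret}. With the paper's notion of pseudo-pretentiousness (a fixed $\chi$ and a fixed $t$), that reading is false. Take the function $f$ of Section~\ref{sec:counterex} and set $x=t_{m+1}=e^{s_{m+1}}$.

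\begin{itemize}
\item For all $n\le x$, apart from those with $p^\ell\mid n$ for some $p\le t_m$ and $\ell>10\log t_m$, one has $f(n)=n^{is_{m+1}}(1+O(1/t_m))$. The excluded set has logarithmic density $o(1)$.
\item Since $\log(s_{m+1}^2)/\log t_{m+1}=2\log s_{m+1}/s_{m+1}\to0$, it follows that $\frac{1}{\log x}\sum_{n\le x}f(n)\overline{f(n+1)}/n\to1$ along $x=t_{m+1}$.
\item Yet $f^k(p)=p^{iks_{m+1}}$ on $(t_m,t_{m+1}]$, with $s_{m+1}\to\infty$ extremely fast, so no power of $f$ pretends to a fixed $\chi n^{it}$.
\end{itemize}

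Your backup via Lemma~\ref{lem:logtao} fails for the same reason. The distance $\mathbb{D}(f,n^{is_{m+1}};t_{m+1})$ is tiny, while $|s_{m+1}|=\log t_{m+1}\le At_{m+1}$. So $f$ is not $(A,x)$-non-pretentious at these scales. Non-pretentiousness for each fixed $t$ gives no uniformity over $|t|\le Ax$.

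What Tao's lemma does give is the following. Suppose some fixed correlation $f^a(n)g^b(n+1)$ is large at scales $x_k$ with $\log x_{k+1}\ll\log x_k$. Then $\mathbb{D}(g^b,\chi_kn^{it_k};x_k)\ll1$ at each scale. Comparing consecutive scales forces $t_k$ to converge, which yields genuine pretentiousness. This is why the paper, before invoking Lemma~\ref{lem:impliespret}, tries to show that one fixed pair $(a,b)$ has a large correlation at \emph{all} large $x$, using pigeonholing and a lower-density claim for $\mathcal{X}$. Your proposal skips this step entirely.

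However, that step is not available here either. The hypothesis $\underline{d}_{\mathrm{log}}(\mathcal{A})=0$ only controls $\mathcal{A}(x)$ for $x$ in the possibly very sparse set $\mathcal{N}$, and $\mathcal{X}\subseteq\mathcal{N}$. In fact the gap cannot be closed as the statement stands:
\begin{itemize}
\item Take $g=f$ as above, $z=1$, $w=-1$ and $\varepsilon$ small.
\item Every $n\in[s_{m+1}^2,t_{m+1})$ outside the exceptional set satisfies $f(n)\overline{f(n+1)}=1+o(1)$, hence $n\notin\mathcal{A}$.
\item So $\frac{1}{\log t_{m+1}}\sum_{n\in\mathcal{A}(t_{m+1})}1/n\to0$ and $\underline{d}_{\mathrm{log}}(\mathcal{A})=0$, with $(f^K,f^K)\notin\mathcal{F}$ for all $K$.
\item Yet $f$ is non-pseudo-pretentious.
\end{itemize}

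An argument of your type can succeed only when $\frac{1}{\log x}\sum_{n\in\mathcal{A}(x)}1/n\to0$ along all $x$, which is the setting of Conjecture~\ref{conj:DKP}. Even there, Step 2 must be replaced by Tao's lemma applied at every scale together with the stabilisation of $t_x$. The claim that each mixed correlation tends to $0$ remains false there too.
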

\begin{proof}
    Suppose for contradiction that at least one of $f$ and $g$ is non-pseudo-pretentious. By Proposition~\ref{prop:only1pret}, both are non-pseudo-pretentious. Let $x\in\mathcal{N}$, define
    $$ S \vcentcolon= \frac{1}{\log x} \sum_{\substack{n \le x \\ n\notin\mathcal{A}(x)}} \frac{\mathbb{1}_{B_\frac{\varepsilon}{10}(z)}\left(f(n)\right)\mathbb{1}_{B_\frac{\varepsilon}{10}(w)}\left(g(n+1)\right)}{n}. $$
    Assume that $x$ is sufficiently large so that
    $$ \frac{1}{\log x}\sum_{n\in\mathcal{A}(x)}\frac{1}{n}\le\varepsilon^3. $$
    By definition of $\mathcal{A}(x)$, we have $S=0$. Moreover, by Lemma~\ref{lem:fourierapprox}, for $X\geq1$, we have
    \begin{equation*}
        \begin{aligned}
            S
            &= \frac{1}{\log{x}}\sum_{\left|a\right|,\left|b\right|\le X}{\frac{\sin{\left(2\pi a\frac{\varepsilon}{10}\right)}\sin{\left(2\pi b\frac{\varepsilon}{10}\right)}}{\pi^2abz^aw^b}\sum_{n\notin\mathcal{A}(x)}\frac{f^a(n)g^b(n+1)}{n}}+O\left(\frac{1}{\varepsilon^2X}\right) \\
            &= \frac{\varepsilon^2}{25}+\frac{1}{\log{x}} \sum_{\substack{\left|a\right|,\left|b\right|\le X \\ (a,b)\neq(0,0)}} \frac{\sin{\left(2\pi a\frac{\varepsilon}{10}\right)}\sin{\left(2\pi b\frac{\varepsilon}{10}\right)}}{\pi^2abz^aw^b}\sum_{n\le x}\frac{f^a(n)g^b(n+1)}{n} \\
            &\quad + O\left(\frac{1}{\varepsilon^2X} + \varepsilon^3\left(\log{X}\right)^2 + \frac{\varepsilon^2}{\log x}\right).
        \end{aligned}
    \end{equation*}
    Take $X=\varepsilon^{-5}$ and $x \ge e^{100}$. By the triangle inequality, we have
    $$ \frac{\varepsilon^2}{50}\pi^2\log{x} \le \sum_{\substack{\left|a\right|,\left|b\right|\le X \\ (a,b)\neq(0,0)}} \frac{1}{ab}\left|\sum_{n\le x}\frac{f^a(n)g^b(n+1)}{n}\right|. $$
    By the pigeonhole principle, there exist $a,b\in\mathbb{Z}$ (independent of $x$) with $\left|a\right|,\left|b\right|\le X$ and $\left(a,b\right)\neq0$ such that
    \begin{equation}\label{thm:redpret-1}
        \left|\sum_{n\le x}\frac{f^a(n)g^b(n+1)}{n}\right|\gg_\varepsilon\log{x}
    \end{equation}
    for all $x \in \mathcal{X}$, where $\mathcal{X}$ is a set of positive integers whose lower asymptotic density is at least $\frac{1}{10X^2}$. Then, note that for all $x \in \mathcal{X}$ sufficiently large depending on $X$, there is an element of $\mathcal{X}$ in the interval $(x,11X^2x]$. Thus, \eqref{thm:redpret-1} holds for all $x$ sufficiently large depending on $\varepsilon$. Since pseudo-pretentiousness is invariant under conjugation, we may assume without loss of generality that $a,b\geq0$. Furthermore, we must have $a,b\geq1$, since otherwise the above inequality would contradict Lemma~\ref{lem:loghalasz}. We then conclude by Lemma~\ref{lem:impliespret}.
\end{proof}

\section{Proof of the main theorem}\label{sec:proof}

Let $f$ and $g$ be as in Theorem~\ref{thm:main}. We know by Theorem~\ref{thm:redpret} that there are completely multiplicative functions $h_1$ and $h_2$, integers $k_1,k_2\geq1$, Dirichlet characters $\chi_1$ and $\chi_2$ of respective periods $q_1$ and $q_2$, and $t_1,t_2\in\mathbb{R}$ such that $h_1^{k_1}=\widetilde{\chi_1}$, $h_2^{k_2}=\widetilde{\chi_2}$ and
$\mathbb{D}\left(f,h_1n^{it_1};\infty\right), \mathbb{D}\left(g,h_2n^{it_2};\infty\right)<\infty$.
We take $k_1$ and $k_2$ minimal.

In this section, we will introduce the parameters $y$ and $Y$. Furthermore, without loss of generality, $\varepsilon$ may be supposed to be sufficiently small. Using a similar convention to that in the proof of Proposition~\ref{prop:only1pret}, the parameters are assumed to be large relatively to other parameters, following the hierarchy
$$ z,w,f,g,h_1,h_2,k_1,k_2,t_1,t_2,\chi_1,\chi_2,q_1,q_2 \lll \varepsilon^{-1} \lll y \lll Y. $$

Let $\mathcal{N}\subset\mathbb{N}$ be an infinite set such that
$$ \lim\limits_{\substack{x\to+\infty \\ x\in\mathcal{N}}} \frac{1}{\log{x}} \sum_{n\in\mathcal{A}(x)} \frac{1}{n} = 0. $$

In the next two propositions, we construct arithmetic progressions on which $f^{k_1}$ and $g^{k_2}$ take specific values. We make two different constructions depending on whether $(t_1,t_2)=(0,0)$ or not.

\begin{proposition}\label{prop:proofA}
    Assume that $\left(t_1,t_2\right)\neq\left(0,0\right)$ and that the conclusion of Theorem~\ref{thm:main} does not hold. Then, there exist
    \begin{itemize}
        \item   a real number $y \ge 1$ sufficiently large in terms of $\varepsilon$ and the underlying quantities;
        \item   positive integers $n_0 \le W^\prime$ only depending on $y$ and the underlying quantities;
        \item   an integer $R \in [n_0,n_0+W'-1]$ and an increasing infinite sequence $\left(Y_j\right)_{j\geq1}$ of positive integers such that for all $j\geq1$, $W^\prime Y_j+R\in\mathcal{N}$;
        \item   a subset $\mathcal{I}_j\subset [1,Y_j]\cap\mathbb{N}$ such that $\frac{1}{\log{Y_j}}\sum_{\ell\in\mathcal{I}_j}\frac{1}{\ell}\geq1-O\left(\varepsilon^3\right)$ for all $j$ sufficiently large in terms of $y$ and the underlying quantities;
        \item complex numbers $u,v\in\mathbb{T}$ independent of $j$,
    \end{itemize}
    such that, if $n_\ell \vcentcolon= W^\prime\ell+n_0$, then
    $$
    \forall\ell\in\mathcal{I}_j,
    \begin{cases}
        f^{k_1}(n_\ell)=u^{k_1}\ell^{it_1k_1}+O\left(\varepsilon^9\right), \\
        g^{k_2}(n_\ell+1)=v^{k_2}\ell^{it_2k_2}+O\left(\varepsilon^9\right).
    \end{cases}
    $$
    In addition, if $t_1=0$ (resp. $t_2=0$), then we can take $u=z$ (resp. $v=w$). If there are coprime $s_1,s_2 \in \mathbb{Z}$ such that $s_1 t_1 = s_2 t_2 \neq 0$, then we can require $\left(\frac{z}{u}\right)^{s_1}=\left(\frac{w}{v}\right)^{s_2}+O(\varepsilon^9)$.
\end{proposition}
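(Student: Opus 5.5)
We follow the template of the proof of Proposition~\ref{prop:only1pret}, now applied to both functions at once. The idea is to choose a progression $n_\ell=W'\ell+n_0$ on which $f(n_\ell)$ and $g(n_\ell+1)$ are both governed by the concentration inequality, Lemma~\ref{prop:concentration}.

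\textbf{Construction of the progression.} Take $y$ large and set $W_1:=2q_1q_2\prod_{p\le y}p$. Next pick integers $W_2,W_3$ with $P^+(W_2W_3)\le y$, with $(W_2,W_3)$ to be chosen later. Define
$$n_\ell := W_2W_1(W_1W_3\ell+1)\cdot(\text{unit adjustment}).$$
We want $n_\ell$ to factor as (smooth part)$\times(W_1\cdot\text{stuff}\,\ell+1)$, and $n_\ell+1$ to factor likewise. A convenient concrete choice is the following. Fix a smooth $A$, and solve via CRT for $n_0$ such that $n_\ell=A(M\ell+a)$ and $n_\ell+1=B(M\ell+b)$, with $M$ divisible by $W_1$, $a\equiv b\equiv1 \pmod{W_1}$, and $A,B$ being $y$-smooth. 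For example, take $n_\ell=A\,(AB W_1^2\ell+1)\cdot$, adjusted so that $n_\ell+1=B(\dots)$. Explicitly, we need $A\cdot c\equiv -1 \pmod B$ and $c\equiv 1\pmod{W_1}$, which is solvable by CRT when $(A,B)=1$ and $(B,W_1)=1$. Since $B$ must be $y$-smooth, we instead let $B$ be a large power of a prime $p\le y$ and require only $c\equiv1$ modulo $W_1/p^{v_p(W_1)}$; the prime $p$ is then treated separately. This works because $f(p)$ is fixed and the powers of $p$ appearing are controlled.

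\textbf{Applying concentration.} We apply Lemma~\ref{prop:concentration} to $f\overline{h_1}$ and to $g\overline{h_2}$ along the two progressions $M\ell+a$ and $M\ell+b$. This gives, outside a set of $\ell$ of logarithmic density $O(\varepsilon^3)$,
$$f(n_\ell)\approx f(A)h_1(M\ell+a)(M\ell)^{it_1}e^{iI_1(y,Y)},\qquad g(n_\ell+1)\approx g(B)h_2(M\ell+b)(M\ell)^{it_2}e^{iI_2(y,Y)}.$$
Raising to the powers $k_1$ and $k_2$ kills $h_1,h_2$, because $h_i^{k_i}(M\ell+a)=\chi_i(a)=1$. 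By compactness we pass to a subsequence $Y_j$ along which $e^{iI_r(y,Y_j)}\to\gamma_r$. As in Proposition~\ref{prop:only1pret}, pigeonholing the residue class $R$ makes $W'Y_j+R\in\mathcal{N}$. Rankin's trick then produces $\mathcal{I}_j$ with the required density and error $O(\varepsilon^9)$; to get this, we run the concentration bound with $\varepsilon^{18}$ in place of $\varepsilon^6$. This yields $u,v$ depending only on $y$, $A$, $B$, $\gamma_r$.

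\textbf{Normalisations.} If $t_1=0$, we use the density of $\{f(p^n)\}$ for a fixed $p\le y$, exactly as before, to choose $A$ with $f(A)\approx z\overline{(\cdots)}$, which makes $u=z$; we handle $t_2=0$ symmetrically via $B$. The main obstacle is the final requirement $(z/u)^{s_1}\approx(w/v)^{s_2}$ when $s_1t_1=s_2t_2\ne0$. Here we cannot normalise $u$ and $v$ independently, since $\ell^{it_1}$ and $\ell^{it_2}$ are then locked together. Our first attempt would exploit the remaining freedom in $A$ and $B$, namely multiplying $A$ by $p_1^{m}$ and $B$ by $p_2^{m'}$ for primes with dense $f$- and $g$-orbits, so as to adjust $u$ and $v$ separately. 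This shifts $u$ by $f(p_1)^m$ and also shifts the factor $M^{it_1}$ by $p_1^{imt_1}$. We then want the single combination $(z/u)^{s_1}(v/w)^{s_2}$ to be close to $1$. Denseness of $\{f(p_1^m)p_1^{-imt_1\cdots}\}$ fails exactly when $f^{K}$ coincides with $n^{iT}$, which is the excluded conclusion of Theorem~\ref{thm:main}. So we use the hypothesis that the theorem fails to find a prime $p$ whose rotation $f(p)^{s_1}\overline{p^{is_1t_1}}$ (or the $g$-analogue) generates a dense orbit, and we choose its exponent to close the gap up to $O(\varepsilon^9)$.
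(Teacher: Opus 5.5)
Your overall strategy is the paper's. You write $n_\ell$ as a $y$-smooth factor times a linear form whose residue is coprime to a modulus divisible by $q_1q_2\prod_{p\le y}p$. You apply Lemma~\ref{prop:concentration} on both sides, remove $h_1,h_2$ via the powers $k_1,k_2$, extract $\gamma_1,\gamma_2$ by compactness, pigeonhole $R$, and use Rankin's trick. When $s_1t_1=s_2t_2\neq0$, you tune the exponent of a prime $p$ for which $f(p)p^{-it_1}$ has infinite order.

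The step that fails as written is your construction of the progression. The ``convenient concrete choice'' $n_\ell=A(M\ell+a)$, $n_\ell+1=B(M\ell+b)$ with a common $M$ is impossible. Comparing coefficients of $\ell$ forces $A=B$, hence $A=B=1$ and $b=a+1$, which contradicts $a\equiv b\pmod{W_1}$. Your repair with $B=p^m$ and $c\equiv1$ only modulo $W_1/p^{v_p(W_1)}$ leaves the hypothesis $(Q,a)=1$ of Lemma~\ref{prop:concentration} unchecked for the $g$-side residue $(Ac+1)/p^m$. You would need $p^{m+1}\nmid Ac+1$ and no other prime $\le y$ dividing $Ac+1$; saying that $p$ ``is treated separately'' does not settle this.

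None of this is necessary. The condition $a\equiv b\equiv1\pmod{W_1}$ is superfluous: coprimality to the modulus suffices, and $\chi_i$ of a fixed residue is just a constant absorbed into $u,v$. With $B=1$, your very first formula $n_\ell=W_2W_1(W_1W_3\ell+1)$, without any ``unit adjustment'', already works, because $n_\ell+1\equiv1\pmod{W_1}$ is automatically an admissible residue. The paper takes $n_\ell=W(2W\ell+r)$ with $r\equiv1\pmod{q_1q_2}$ and $P^-(r)>y$. It obtains $u=(2W)^{it_1}f(W)\gamma_1^{1/k_1}$ and $v=(2W^2)^{it_2}\gamma_2^{1/k_2}$, and checks that $(z/u)^{s_1}(w/v)^{-s_2}$ depends on $W$ only through $(W^{it_1}/f(W))^{s_1}$. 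It avoids your second free factor $B$ by assuming $t_2\neq0$ and exchanging $f$ and $g$ when needed.

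Your locked-case step has a further gap, which the paper's one-line ``by assumption'' shares. First, ``no prime gives a dense rotation'' does not yield $f^K(n)=n^{iKt_1}$, since the orders of the roots of unity $f(p)p^{-it_1}$ may be unbounded. This is harmless: a single prime of order $\gg|s_1|\varepsilon^{-9}$ suffices.

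More seriously, if $s_1s_2<0$, then $f^K(n)=n^{iKt_1}$, $g^K(n)=n^{iKt_2}$ is \emph{not} the excluded conclusion $f^{K_1}=g^{K_2}=n^{iT}$ with $K_1,K_2\in\mathbb{N}$. Take $f(n)=n^{it}$ and $g(n)=n^{-it}$. The proposition then forces $u=(W')^{it}+O(\varepsilon^9)$ and $v=(W')^{-it}+O(\varepsilon^9)$. The requirement with $(s_1,s_2)=(1,-1)$ therefore amounts to $zw=1+O(\varepsilon^9)$, whereas for $(z,w)=(1,-1)$ the set $\mathcal{A}$ is finite, so the hypotheses hold. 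This conjugate case $t_1/t_2\in\mathbb{Q}_{<0}$ is covered by neither argument. It has to be excluded from the statement, or else the conclusion of Theorem~\ref{thm:main} must be enlarged to allow $g^{K_2}(n)=n^{-iT}$.
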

\begin{proof}
    If at least one of $t_1$ and $t_2$ is nonzero, then we assume without loss of generality that $t_2\neq0$.

    Let $y\geq 1$ be sufficiently large in terms of $\varepsilon$ and the underlying quantities, and $\left(\alpha_p\right)_{p\le y}$ be parameters, where the $\alpha_p$ are positive integers. We assume that the $\alpha_p$ depend at most on $y$ and the underlying quantities. Set $W \vcentcolon= q_1q_2\prod_{p\le y} p^{\alpha_p}$. Let $r\le W$ be such that $r\equiv1\left[q_1q_2\right]$ and $P^-(r)>y$ (such an $r$ exists by the Chinese remainder theorem, assuming for example that $p|r+1$ for all $p\le y$ such that $p\nmid q_1q_2$). For $\ell\geq1$, set $n_\ell=W^\prime\ell+n_0 \vcentcolon= W(2W\ell+r)$.

    By the pigeonhole principle, there exists an integer $R\in [n_0,n_0+W'-1]$ such that there are infinitely many integers $Y \ge 1$ such that $W^\prime Y+R\in\mathcal{N}$. Take such a $Y$, that we suppose to be sufficiently large in terms of $y$. Since $f^{k_1}$ is pretentious (by the second triangle inequality), taking $y$ and $Y$ sufficiently large, Lemma~\ref{prop:concentration} yields
    $$ \frac{1}{\log{Y}}\sum_{\ell\le Y}{\frac{1}{\ell}\left|f^{k_1}(2W\ell+r)-\left(2W\ell\right)^{it_1k_1}\exp{\left(iI_1(y,Y)+O\left(\varepsilon^{13}\right)\right)}\right|}\le\varepsilon^{12} $$
    and
    $$ \frac{1}{\log{Y}}\sum_{\ell\le Y}{\frac{1}{\ell}\left|g^{k_2}\left(2W^2\ell+Wr+1\right)-\left(2W^2\ell\right)^{it_2k_2}\exp{\left(iI_2(y,Y)+O\left(\varepsilon^{13}\right)\right)}\right|}\le\varepsilon^{12}, $$
    where
    $$ I_1(y,Y) \vcentcolon= \sum_{y<p\le Y}{\frac{\mathfrak{I}{\left(f^{k_1}(p)\overline{\widetilde{\chi_1}(p)}p^{-it_1 k_1}\right)}}{p}} $$
    and
    $$ I_2(y,Y) \vcentcolon= \sum_{y<p\le Y}{\frac{\mathfrak{I}{\left(g^{k_2}(p)\overline{\widetilde{\chi_2}(p)}p^{-it_2 k_2}\right)}}{p}}. $$
    By a compactness argument, there is an increasing sequence $(Y_j)_{j\geq1}$ of large values of $Y$ such that
    $$ \left(e^{iI_1(y,Y_j)}, e^{iI_2(y,Y_j)}\right) \xrightarrow[j\to+\infty]{} \left(\gamma_1, \gamma_2\right)\in\mathbb{T}^2, $$
    for some $\gamma_1$ and $\gamma_2$ depending on $y$, and such that for all $j\geq1$, $W^\prime Y_j+R\in\mathcal{N}$. Then, for all $j\geq1$ sufficiently large in terms of $y$ and the underlying quantities, we have
    $$ \frac{1}{\log{Y_j}}\sum_{\ell\le Y_j}{\frac{\left|f^{k_1}(2W\ell+r)-\left(2W\ell\right)^{it_1k_1}\gamma_1\right|}{\ell}}\le2\varepsilon^{12} $$
    and
    $$ \frac{1}{\log{Y_j}}\sum_{\ell\le Y_j}{\frac{\left|g^{k_2}\left(2W^2\ell+Wr+1\right)-\left(2W^2\ell\right)^{it_2k_2}\gamma_2\right|}{\ell}}\le2\varepsilon^{12}. $$
    
    Define
    $$
    \mathcal{I}_j \vcentcolon=
    \left\{
    \ell \le Y_j :
    \begin{cases}
        \left|f^{k_1}(2W\ell+r)-\left(2W\ell\right)^{it_1k_1}\gamma_1\right|\le 4\varepsilon^9 \\
        \left|g^{k_2}\left(2W^2\ell+Wr+1\right)-\left(2W^2\ell\right)^{it_2k_2}\gamma_2\right|\le 4\varepsilon^9
    \end{cases}
    \right\}.
    $$
    By the union bound and by Rankin’s trick, we have
    \begin{equation*}
        \begin{aligned}
            \sum_{\substack{\ell \le Y_j \\ \ell\notin\mathcal{I}_j}} \frac{1}{\ell}
            &\le \frac{1}{4\varepsilon^9} \sum_{\ell\le Y_j}{\frac{\left|f^{k_1}(2W\ell+r)-\left(2W\ell\right)^{it_1k_1}\gamma_1\right|}{\ell}} \\
            &\quad + \frac{1}{4\varepsilon^9} \sum_{\ell\le Y_j}{\frac{\left|g^{k_2}\left(2W^2\ell+Wr+1\right)-\left(2W^2\ell\right)^{it_2k_2}\gamma_2\right|}{\ell}} \\
            &\le \varepsilon^3\log{Y_j},
        \end{aligned}
    \end{equation*}
    hence
    \begin{equation*}
        \frac{1}{\log{Y_j}}\sum_{\ell\in\mathcal{I}_j}\frac{1}{\ell}\geq1-O\left(\varepsilon^3\right).
    \end{equation*}
    In addition,
    \begin{equation}\label{prop:proofA-1}
        \forall\ell\in\mathcal{I}_j,
        \begin{cases}
            f^{k_1}(n_\ell)=\left(2W\ell\right)^{it_1k_1}f^{k_1}(W)\gamma_1+O\left(\varepsilon^9\right), \\
            g^{k_2}(n_\ell+1)=\left(2W^2\ell\right)^{it_2k_2}\gamma_2+O\left(\varepsilon^9\right).
        \end{cases}
    \end{equation}

    If we have $s_1t_1=s_2t_2\neq0$ for some $s_1, s_2\in\mathbb{Z}$ coprime, then by assumption and by exchanging $f$ and $g$ if necessary, we can assume that there exists a prime $p$ such that $\frac{f(p)}{p^{it_1}}=e(\theta_p)$ for some irrational $\theta_p$. Thus, taking $y \ge p$, we can suitably choose $\alpha_p$ to have
    $$ \frac{f(W)}{W^{it_1}} = \left(\frac{z^{s_1}\gamma_2^{s_2/k_2}}{w^{s_2}\gamma_1^{s_1/k_1}}\right)^{1/s_1}+O\left(\varepsilon^9\right), $$
    where each root of unity may be chosen arbitrarily. Take $u = \left(2W\right)^{it_1}f(W)\gamma_1^{1/k_1}$ and $v = \left(2W^2\right)^{it_2}\gamma_2^{1/k_2}$, where each root of unity may be chosen arbitrarily. In particular, we have
    $$
    \left(\frac{z}{u}\right)^{s_1}\left(\frac{w}{v}\right)^{-s_2}
    = \frac{z^{s_1}\gamma_2^{s_2/k_2}}{w^{s_2}\gamma_1^{s_1/k_1}} \left(\frac{W^{it_1}}{f(W)}\right)^{s_1}
    = 1 + O(\varepsilon^9),
    $$
    hence $ \left(\frac{z}{u}\right)^{s_1} = \left(\frac{w}{v}\right)^{s_2} + O(\varepsilon^9) $.
    
    If $t_1=0$ and $t_2\neq0$, then by density, for $y$ sufficiently large in terms of $\varepsilon$ (it is irrelevant that $\gamma_1$ depends on $y$ since it suffices to have a prime $p \le y$ such that the set $\{f(p^\alpha)\}_{\alpha \in \mathbb{N}}$ is sufficiently close to every point of $\mathbb{T}$ in terms of $\varepsilon$), we can choose $W$ such that $f(W)=z\gamma_1^{-1/k_1}+O\left(\varepsilon^9\right)$.
    
    If $t_1, t_2\neq0$ and $\left(t_1,t_2\right)$ is linearly independent over $\mathbb{Q}$, then we choose $W$ arbitrarily (e.g. $\alpha_p=1$ for all $p\le y$).
    
    In all the above cases, by \eqref{prop:proofA-1} there exist $u,v\in\mathbb{T}$ independent of $\ell$ and $j$ such that
    $$
    \forall\ell\in\mathcal{I}_j,
    \begin{cases}
        f^{k_1}(n_\ell)=u^{k_1}\ell^{it_1k_1}+O\left(\varepsilon^9\right), \\
        g^{k_2}(n_\ell+1)=v^{k_2}\ell^{it_2k_2}+O\left(\varepsilon^9\right),
    \end{cases}
    $$
    with $u=z$ if $t_1=0$.
\end{proof}

\begin{definition}
    Let $f \in \overline{\mathcal{M}}$. We say that $f$ is \textit{eventually rational} if
    $$ \exists k\in\mathbb{N},\ \exists N_0\in\mathbb{N},\ \forall p\geq N_0,\ f^k(p)=1. $$
    Otherwise, we say that $f$ is $\textit{irrational}$.
\end{definition}

\begin{proposition}\label{prop:proofB}
    The conclusion of Proposition~\ref{prop:proofA} holds if one assumes that $t_1=t_2=0$.
\end{proposition}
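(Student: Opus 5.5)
The plan is to follow the proof of Proposition~\ref{prop:proofA}, but with both rotations produced by prime powers. When $t_1=t_2=0$ the factor $\ell^{it}$ no longer moves $f(n_\ell)$ or $g(n_\ell+1)$ around $\mathbb{T}$. So I will build the progression $n_\ell$ so that $n_\ell$ carries a fixed factor $A$ with $f(A)$ prescribed, and $n_\ell+1$ carries a coprime fixed factor $B$ with $g(B)$ prescribed. Then Lemma~\ref{prop:concentration} freezes the remaining cofactors. The target is $u=z$, $v=w$, as the statement requires when $t_1=t_2=0$.

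\textbf{Step 1 (disjoint rotation sources).} I want disjoint finite sets of primes $P_1,P_2$ such that $\{f(m):P^+(m)\le y,\ p\mid m\Rightarrow p\in P_1\}$ and $\{g(m): p\mid m\Rightarrow p\in P_2\}$ are $\varepsilon^{10}$-dense in $\mathbb{T}$. The image of $f$ is dense, so the group generated by the values $f(p)$ is dense. Hence either some $f(p)$ has infinite order, or the orders of the $f(p)$ are unbounded; in both cases finitely many primes already give $\varepsilon^{10}$-density, and the same holds for $g$. Suppose disjoint sets cannot be found. Then all the "density" of $f$ and of $g$ must come from one common prime $p_0$. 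Concretely, there is $K$ with $f^K(q)=g^K(q)=1$ for all $q\neq p_0$, and $f(p_0)$, $g(p_0)$ of infinite order. This says $(f^K,g^K)\in\mathcal{F}$, which is excluded by hypothesis. This case analysis is where the eventually-rational/irrational dichotomy will be used. I expect it to be the main obstacle: one has to split carefully according to whether $f$ (respectively $g$) is irrational or eventually rational, and show that one function can always take a source disjoint from the other's. I would first treat the case where $f$ is irrational, so infinitely many primes are available for $f$, and then the case where both are eventually rational, which reduces to the $\mathcal{F}$ configuration.

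\textbf{Step 2 (the progression).} Take $y$ larger than all primes in $P_1\cup P_2\cup\{p\mid q_1q_2\}$. Set $A=q_1q_2\prod_{p\in P_1}p^{\alpha_p}$ and $B=\prod_{p\in P_2}p^{\beta_p}$, and let $Q=\prod_{p\le y}p^{c_p}$ with $A,B\mid Q$. Using the Chinese remainder theorem, choose $n_\ell=W'\ell+n_0$ with $n_\ell=A(2Q\ell+r)$ and $n_\ell+1=B(2Q'\ell+r')$. The residues are arranged so that $r\equiv r'\equiv1\pmod{q_1q_2}$ and $P^-(r),P^-(r')>y$; this is possible because $A$ and $B$ are coprime. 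As before, the pigeonhole principle gives $R$ and infinitely many $Y$ with $W'Y+R\in\mathcal{N}$.

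\textbf{Step 3 (concentration and choice of exponents).} Apply Lemma~\ref{prop:concentration} with $t=0$ to the pretentious functions $f^{k_1}$ and $g^{k_2}$ on these progressions. By compactness, pass to a subsequence $Y_j$ along which $e^{iI_1(y,Y_j)}\to\gamma_1$ and $e^{iI_2(y,Y_j)}\to\gamma_2$. Rankin's trick then gives a set $\mathcal{I}_j$ of logarithmic density $1-O(\varepsilon^3)$ on which $f^{k_1}(2Q\ell+r)=\gamma_1+O(\varepsilon^9)$ and $g^{k_2}(2Q'\ell+r')=\gamma_2+O(\varepsilon^9)$. The characters contribute trivially here since $r\equiv r'\equiv1\pmod{q_1q_2}$.

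Only now are the exponents $\alpha_p,\beta_p$ fixed, depending on $\gamma_1,\gamma_2$, so that $f(A)^{k_1}=z^{k_1}\overline{\gamma_1}+O(\varepsilon^9)$ and $g(B)^{k_2}=w^{k_2}\overline{\gamma_2}+O(\varepsilon^9)$. This is legitimate because the density from Step~1 depends only on the fixed sets $P_1,P_2$, not on $y$. Step~1 has to be arranged so that $\alpha_p,\beta_p$ affect $Q,Q'$ only through primes $\le y$, so that the $\gamma_i$ do not change. Finally, $u=z$ and $v=w$ give the required approximations for $\ell\in\mathcal{I}_j$, and the side condition on $s_1t_1=s_2t_2\neq0$ is vacuous.
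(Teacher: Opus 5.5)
Your strategy matches the paper's: two disjoint rotation sources (this is where $(f^K,g^K)\notin\mathcal{F}$ is used), a progression with a prescribed factor in $n_\ell$ and a coprime one in $n_\ell+1$, then Lemma~\ref{prop:concentration} plus compactness to freeze the cofactors, with the exponents chosen last. Step~1 is fine, and it is not the main obstacle you expect. The implication you assert holds because the closure of $\{g(m): p_0\nmid m\}$ is a closed subgroup of $\mathbb{T}$. If it is not all of $\mathbb{T}$, it equals some $\mu_N$, so $g^N(q)=1$ for all $q\neq p_0$, and likewise for $f$. Working with finite sets of primes, this is a tidy replacement for the paper's irrational/eventually rational case split.

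The gap is in Step~2. The CRT conditions cannot be met in general, and the coprimality of $A$ and $B$ is not the right criterion.
\begin{itemize}
\item Since $P^-(r),P^-(r')>y\ge 2$, both $2Q\ell+r$ and $2Q'\ell+r'$ are odd. Hence $n_\ell\equiv A$ and $n_\ell+1\equiv B \pmod 2$. When $q_1q_2$ is odd and $2\notin P_1\cup P_2$, this makes $n_\ell$ and $n_\ell+1$ both odd, which is impossible.
\item Putting $q_1q_2$ into $A$ means $\gcd(A,B)>1$ whenever $P_2$ contains a prime divisor of $q_1q_2$.
\item When $A$ and $B$ are coprime, $Br'=Ar+1\equiv 1\pmod{q_1q_2}$ forces $r'\equiv B^{-1}$. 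So $r'\equiv 1$ would require $B\equiv 1\pmod{q_1q_2}$, which you cannot impose because $B$ is chosen afterwards.
\end{itemize}
The paper's progression $n_\ell=2p_1^{m_1}(2Wp_2^{m_2}\ell+r)$ is built around exactly these points. It puts an explicit factor $2$ in $n_\ell$ with $p_2\neq 2$, keeps the modulus $q_1q_2$ out of the rotating factor, and imposes local conditions on $r$ at every $p\le y$, including $p=2$. It also absorbs $\chi_1(r)$ into the target for $f(p_1^{m_1})$ rather than forcing it to equal $1$. In your notation, the repair is:
\begin{itemize}
\item drop $q_1q_2$ from $A$;
\item put a factor $2$ into $A$ unless $2\in P_2$;
\item carry $\chi_1(r)$ and $\chi_2(r')$ as constants.
\end{itemize}

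A smaller point on the order of choices. You are right that the $\gamma_i$ do not depend on the exponents, but $W'$ does, and hence so do $R$ and the admissible $Y_j$. The paper is equally brief here. One fix is to extract the convergent subsequence along $x\in\mathcal{N}$ first, since $I_i(y,\cdot)$ changes by only $o(1)$ between $Y$ and $W'Y+R$.
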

\begin{proof}
    We first show that there exist distinct primes $p_1$ and $p_2$ such that the sets $\left\{f\left(p_1^n\right)\right\}_n$ and $\left\{g\left(p_2^n\right)\right\}_n$ are within distance at most $\varepsilon^9$ to every point of $\mathbb{T}$. If $f$ or $g$ is irrational, then this immediately follows by definition of irrationality and the fact that $\overline{\left\{f_j(n)\right\}_n}=\mathbb{T}$ ($j\in\{1,2\}$). Otherwise, $f$ and $g$ are both eventually rational. Thus, there exist $k,N_0 \in \mathbb{N}$ such that for all $p \ge N_0$, we have $f^k(p)=g^k(p)=1$. Since $\left\{f^k(n)\right\}_n$ and $\left\{g^k(n)\right\}_n$ are dense, there are primes $p_1,p_2 < N_0$ such that, if we write $f(p_1)=e(\alpha_1)$ and $g(p_2)=e(\alpha_2)$, then $\alpha_1,\alpha_2 \in \mathbb{R}\backslash\mathbb{Q}$. We can choose $p_1 \neq p_2$ thanks to the condition $(f^K,g^K) \notin \mathcal{F}$ for all positive integer $K$ such that $k|K$. By exchanging $f$ and $g$ if necessary, let us assume that $p_2 \neq 2$.

    Set $W \vcentcolon= q_1q_2\prod_{p\le y} p$, where $y\geq1$ is sufficiently large in terms of $\varepsilon$ and the underlying quantities. Take $y \ge p_1,p_2,P^+(q_1),P^+(q_2)$. Let $m_1,m_2 \ge 1$ be parameters depending at most on $\varepsilon$ and underlying quantities. For $\ell\geq1$, define $n_\ell = W'\ell+n_0 \vcentcolon= 2 p_1^{m_1}\left(2W p_2^{m_2}\ell+r\right)$ for some $r \ge 1$ independent of $\ell$. By the Chinese remainder theorem, we can take $r \le W p_2^{m_2}$ such that $p_2^{m_2}|2 p_1^{m_1}r+1$ and $P^-(r),P^-\left(\frac{2 p_1^{m_1}r+1}{p_2^{m_2}}\right)>y$ (consider e.g. the relations $2 p_1^{m_1}r+1 \equiv p_2^{m_2}(p_2-1) \ \left[ p_2^{m_2+1}\right]$, $r \equiv 1 \ [p_1]$, $r \equiv 1 \ [2]$, and for $3 \le p \le y$ such that $p \neq p_1,p_2$, $r \equiv r_p \ [p]$, where $-r_p \in \mathbb{F}_p$ is not $0$ or the inverse of $2 p_1$ mod $p$).

    Now, let $Y\geq1$ be sufficiently large in terms of $y$. By the pigeonhole principle, there exists an integer $R\in [n_0,n_0+W'-1]$ such that $W^\prime Y+R\in\mathcal{N}$ for infinitely many integers $Y$. From now on, we assume that $W^\prime Y+R\in\mathcal{N}$. By Lemma~\ref{prop:concentration}, taking $y$ and $Y$ sufficiently large, we have
    $$ \frac{1}{\log{Y}}\sum_{\ell\le Y}\frac{\left|f^{k_1}\left(2Wp_2^{m_2}\ell+r\right)-\chi_1(r)e^{iI_1(y,Y)+O\left(\varepsilon^{13}\right)}\right|}{\ell}\le\varepsilon^{12} $$
    and
    $$ \frac{1}{\log{Y}}\sum_{\ell\le Y}\frac{\left|g^{k_2}\left(4 p_1^{m_1}W\ell+\frac{2 p_1^{m_1}r+1}{p_2^{m_2}}\right)-e^{iI_2(y,Y)+O\left(\varepsilon^{13}\right)}\right|}{\ell}\le\varepsilon^{12}, $$
    where
    $$ I_1(y,Y) \vcentcolon= \sum_{y<p\le Y}{\frac{\mathfrak{I}{\left(f^{k_1}(p)\overline{\widetilde{\chi_1}(p)}\right)}}{p}} \quad ; \quad I_2(y,Y) \vcentcolon= \sum_{y<p\le Y}{\frac{\mathfrak{I}{\left(g^{k_2}(p)\overline{\widetilde{\chi_2}(p)}\right)}}{p}}. $$
    By a compactness argument, we can extract an increasing sequence $\left(Y_j\right)_{j \ge 1}$ of integers and $\left(\gamma_1,\gamma_2\right)\in\mathbb{T}^2$ depending on $y$ such that
    $$\left(e^{iI_1\left(y,Y_j\right)},e^{iI_2\left(y,Y_j\right)}\right)\xrightarrow[j\to+\infty]{}\left(\gamma_1,\gamma_2\right),$$
    and for all $j\geq1, W^\prime Y_j+R\in\mathcal{N}$. Then, for $j$ sufficiently large in terms of $y$ and the underlying quantities, we have
    $$ \frac{1}{\log{Y_j}}\sum_{\ell\le Y_j}\frac{\left|f^{k_1}\left(2Wp_2^{m_2}\ell+r\right)-\chi_1(r)\gamma_1\right|}{\ell}\le2\varepsilon^{12} $$
    and
    $$ \frac{1}{\log{Y_j}}\sum_{\ell\le Y_j}\frac{\left|g^{k_2}\left(4 p_1^{m_1}W\ell+\frac{2 p_1^{m_1}r+1}{p_2^{m_2}}\right)-\gamma_2\right|}{\ell}\le2\varepsilon^{12}. $$

    Define
    $$
    \mathcal{I}_j \vcentcolon=
    \left\{
    \ell\le Y_j :
    \begin{cases}
        \left|f^{k_1}\left(2Wp_2^{m_2}\ell+r\right)-\chi_1(r)\gamma_1\right| \le 4\varepsilon^9 \\
        \left|g^{k_2}\left(4 p_1^{m_1}W\ell+\frac{2 p_1^{m_1}r+1}{p_2^{m_2}}\right)-\gamma_2\right| \le 4\varepsilon^9
    \end{cases}
    \right\}.
    $$
    By the union bound and by Rankin’s trick, we have
    \begin{equation*}
        \begin{aligned}
            \sum_{\substack{\ell \le Y_j \\ \ell\notin\mathcal{I}_j}} \frac{1}{\ell}
            &\le \frac{1}{4\varepsilon^9} \sum_{\ell\le Y_j}\frac{\left|f^{k_1}\left(2Wp_2^{m_2}\ell+r\right)-\chi_1(r)\gamma_1\right|}{\ell} \\
            &\quad + \frac{1}{4\varepsilon^9} \sum_{\ell\le Y_j}\frac{\left|g^{k_2}\left(4 p_1^{m_1}W\ell+\frac{2 p_1^{m_1}r+1}{p_2^{m_2}}\right)-\gamma_2\right|}{\ell} \\
            &\le \varepsilon^3\log{Y_j}.
        \end{aligned}
    \end{equation*}

    By our choice of $p_1$ and $p_2$, we can choose $m_1$ and $m_2$ such that
    $$ \left(f(p_1^{m_1}),g(p_2^{m_2})\right)=\left(z(\chi_1(r)\gamma_1)^{-1/k_1}\overline{f(2)},w\gamma_2^{-1/k_2}\right)+O\left(\varepsilon^9\right), $$
    where each root of unity can be chosen arbitrarily. Then for $\ell\in\mathcal{I}_j$ with $j$ sufficiently large, we have
    $$ f^{k_1}(n_\ell)=f^{k_1}(p_1^{m_1})f^{k_1}(2)f^{k_1}\left(2Wp_2^{m_2}\ell+r\right)=z^{k_1}+O\left(\varepsilon^9\right) $$
    and
    $$ g^{k_2}(n_\ell+1)=g^{k_2}\left(p_2^{m_2}\right)g^{k_2}\left(4 p_1^{m_1}W\ell+\frac{2 p_1^{m_1}r+1}{p_2^{m_2}}\right)=w^{k_2}+O\left(\varepsilon^9\right). $$
\end{proof}

We are now ready to prove our main theorem.

\begin{proof}[Proof of Theorem~\ref{thm:main}]
    Assume, for the sake of contradiction, that the conclusion of the theorem does not hold, and let us use the notation in the statements of Propositions \ref{prop:proofA} and \ref{prop:proofB}.
    
    Define
    $$ \mathcal{I}_j^\prime \vcentcolon= \left\{\ell\in\mathcal{I}_j : f(n_\ell)\notin B_{\varepsilon^3}\left(ze\left(\pm\frac{\varepsilon}{10}\right)\right), g(n_\ell+1)\notin B_{\varepsilon^3}\left(we\left(\pm\frac{\varepsilon}{10}\right)\right)\right\}. $$
    Taking $\varepsilon$ sufficiently small, we have
    \begin{equation*}
        \begin{multlined}
            \left\{\ell\in\mathcal{I}_j :
            \begin{cases}
                u^{k_1}\ell^{it_1k_1}\notin B_{k_1\varepsilon^3}\left(z^{k_1}e\left(\pm\frac{k_1\varepsilon}{10}\right)\right) \\
                v^{k_2}\ell^{it_2k_2}\notin B_{k_2\varepsilon^3}\left(w^{k_2}e\left(\pm\frac{k_2\varepsilon}{10}\right)\right)
            \end{cases}
            \right\}
            \subset\mathcal{I}_j^\prime.
        \end{multlined}
    \end{equation*}
    As a consequence, by the union bound and Lemma~\ref{lem:logsinglenit}, we obtain
    \begin{equation}\label{thm:main-1}
        \frac{1}{\log{Y_j}}\sum_{\ell\in\mathcal{I}_j^\prime}\frac{1}{\ell}\geq1-O\left(\varepsilon^3\right),
    \end{equation}
    where we used the fact that when $t_1=0$ (resp. $t_2=0$), then $u=z$ (resp. $v=w$).

    Now, define
    $$ S_j \vcentcolon= \frac{1}{\log{Y_j}}\sum_{\ell\in\mathcal{I}_j^\prime}\frac{\mathbb{1}_{B_{\frac{\varepsilon}{10}}(z)}\left(f(n_\ell)\right)\mathbb{1}_{B_\frac{\varepsilon}{10}(w)}\left(g(n_\ell+1)\right)}{\ell}. $$
    Let $X \ge 1$. By Lemma~\ref{lem:fourierapprox} and by definition of $\mathcal{I}_j^\prime$, we have
    $$ S_j=\sum_{\left|a\right|,\left|b\right|\le X}{\frac{\sin{\left(2\pi a\frac{\varepsilon}{10}\right)}}{\pi a z^a}\frac{\sin{\left(2\pi b\frac{\varepsilon}{10}\right)}}{\pi b w^b}\frac{1}{\log{Y_j}}\sum_{\ell\in\mathcal{I}_j^\prime}\frac{f^a(n_\ell)g^b(n_\ell+1)}{\ell}}+O\left(\frac{1}{\varepsilon^3X}\right). $$
    Let $\delta>0$. Take $j$ sufficiently large in terms of $X$ and $\delta$. Apply \eqref{thm:main-1}, then Lemmas~\ref{lem:nonpret} and \ref{lem:logtao} to obtain
    \begin{equation*}
        \begin{multlined}
            S_j
            = \frac{1}{k_1k_2} \sum_{\substack{\left|c\right|\le\frac{X}{k_1} \\ \left|d\right|\le\frac{X}{k_2}}} \frac{\sin{\left(2\pi c k_1\frac{\varepsilon}{10}\right)}}{\pi c z^{k_1c}}\frac{\sin{\left(2\pi d k_2\frac{\varepsilon}{10}\right)}}{\pi d w^{k_2d}}\frac{1}{\log{Y_j}}\sum_{\ell\le Y_j}\frac{f^{k_1c}(n_\ell)g^{k_2d}(n_\ell+1)}{\ell} \\
            + O\left(\frac{1}{\varepsilon^3X}+(\delta+\varepsilon^3) \left(\log{X}\right)^2\right).
        \end{multlined}
    \end{equation*}
    By Propositions \ref{prop:proofA} and \ref{prop:proofB}, we deduce that
    \begin{equation*}
        \begin{multlined}
            S_j
            = \frac{1}{k_1k_2} \sum_{\substack{\left|c\right|\le\frac{X}{k_1} \\ \left|d\right|\le\frac{X}{k_2}}} \frac{\sin{\left(2\pi c k_1\frac{\varepsilon}{10}\right)}}{\pi c z^{k_1c}}\frac{\sin{\left(2\pi d k_2\frac{\varepsilon}{10}\right)}}{\pi d w^{k_2d}} \\
            \qquad\qquad\qquad\quad \times \frac{1}{\log{Y_j}}\sum_{\ell\in\mathcal{I}_j^\prime}\frac{\left(u\ell^{it_1}\right)^{k_1c}\left(v\ell^{it_2}\right)^{k_2d}e^{O\left(X\varepsilon^9\right)}}{\ell} \\
            + O\left(\frac{1}{\varepsilon^3X}+(\delta+\varepsilon^3)\ \left(\log{X}\right)^2\right).
        \end{multlined}
    \end{equation*}
    Assume that $X \le \varepsilon^{-9}$. Then, it follows that
    \begin{equation}\label{thm:main-2}
        \begin{multlined}
            S_j
            = \frac{1}{k_1k_2\log{Y_j}} \sum_{\ell\in\mathcal{I}_j^\prime} \frac{1}{\ell} \sum_{\substack{\left|c\right|\le\frac{X}{k_1} \\ \left|d\right|\le\frac{X}{k_2}}} \frac{\sin{\left(2\pi c k_1\frac{\varepsilon}{10}\right)}}{\pi c}\frac{\sin{\left(2\pi d k_2\frac{\varepsilon}{10}\right)}}{\pi d}\left(\frac{u\ell^{it_1}}{z}\right)^{k_1c}\left(\frac{v\ell^{it_2}}{w}\right)^{k_2d} \\
            + O\left(\frac{1}{\varepsilon^3X}+\left(\delta+\varepsilon^3+X\varepsilon^9\right)\left(\log{X}\right)^2\right).
        \end{multlined}
    \end{equation}

    Now, define
    $$ \mathcal{I}_j^{\prime\prime} \vcentcolon= \left\{\ell\in\mathcal{I}_j^\prime : \left(\frac{u\ell^{it_1}}{z}\right)^{k_1}\notin B_{\varepsilon^3}\left(e\left(\pm\frac{k_1\varepsilon}{10}\right)\right), \left(\frac{v\ell^{it_2}}{w}\right)^{k_2}\notin B_{\varepsilon^3}\left(e\left(\pm\frac{k_2\varepsilon}{10}\right)\right)\right\}. $$
    By the union bound and Lemma~\ref{lem:logsinglenit}, we have
    \begin{equation}\label{thm:main-3}
        \frac{1}{\log{Y_j}}\sum_{\ell\in\mathcal{I}_j^{\prime\prime}}\frac{1}{\ell}\geq1-O\left(\varepsilon^3\right).
    \end{equation}
    Take $X=\varepsilon^{-6}$ and $\delta=\varepsilon^3$. When $j$ is sufficiently large in terms of $\varepsilon$, applying Lemma~\ref{lem:fourierapprox} again, we obtain by \eqref{thm:main-2} and \eqref{thm:main-3},
    \begin{align}\label{thm:main-4}
        S_j
        &= \frac{1}{k_1k_2\log{Y_j}}\sum_{\ell\in\mathcal{I}_j^{\prime\prime}}\frac{\mathbb{1}_{B_\frac{k_1\varepsilon}{10}\left(\left(\frac{z}{u}\right)^{k_1}\right)}\left(\ell^{it_1k_1}\right)\mathbb{1}_{B_\frac{k_2\varepsilon}{10}\left(\left(\frac{w}{v}\right)^{k_2}\right)}\left(\ell^{it_2k_2}\right)}{\ell}+O\left(\varepsilon^3\left(\log{\frac{1}{\varepsilon}}\right)\right)^2 \nonumber \\
        &\ge \frac{1}{k_1k_2\log{Y_j}}\sum_{\ell\le Y_j}\frac{\mathbb{1}_{B_\frac{\varepsilon}{10}\left(\frac{z}{u}\right)}\left(\ell^{it_1}\right)\mathbb{1}_{B_\frac{\varepsilon}{10}\left(\frac{w}{v}\right)}\left(\ell^{it_2}\right)}{\ell}+O\left(\varepsilon^3\left(\log{\frac{1}{\varepsilon}}\right)^2\right).
    \end{align}
    Let us deduce from \eqref{thm:main-4} that $S_j\gg\varepsilon^2$.
    
    If we have $s_1t_1=s_2t_2\neq0$ for some $s_1, s_2\in\mathbb{N}$ coprime, then we know that $\left(\frac{z}{u}\right)^{s_1}=\left(\frac{w}{v}\right)^{s_2}+O\left(\varepsilon^9\right)$. Hence, for some choices of $\mu,\nu\in\mathbb{T}$ such that $\mu^{s_2}=\frac{z}{u}$ and $\nu^{s_1}=\frac{w}{v}$, we have $\mu=\nu+O\left(\varepsilon^2\right)$. For $\varepsilon$ sufficiently small, it follows that
    \begin{equation*}
        \begin{aligned}
            S_j
            &\geq \frac{1}{k_1k_2\log{Y_j}}\sum_{\ell\le Y_j}\frac{\mathbb{1}_{B_\frac{\varepsilon}{10\left|s_2\right|}\left(\mu\right)}\left(\ell^{i\frac{t_1}{s_2}}\right)\mathbb{1}_{B_\frac{\varepsilon}{10\left|s_1\right|}\left(\nu\right)}\left(\ell^{i\frac{t_1}{s_2}}\right)}{\ell}+O\left(\varepsilon^2\log{\frac{1}{\varepsilon}}\right) \\
            &\ge \frac{1}{k_1k_2\log{Y_j}}\sum_{\ell\le Y_j}\frac{\mathbb{1}_{B_\frac{\varepsilon}{20\max{\left(\left|s_1\right|,\left|s_2\right|\right)}}\left(\mu\right)}\left(\ell^{i\frac{t_1}{s_2}}\right)}{\ell}+O\left(\varepsilon^2\log{\frac{1}{\varepsilon}}\right) \\
            &\gg \varepsilon
        \end{aligned}
    \end{equation*}
    by Lemma~\ref{lem:logsinglenit}.
    
    If $t_1=0$ and $t_2\neq0$ (the same argument holds if $t_2=0$ and $t_1\neq0$), then we have $u=z$, and we apply Lemma~\ref{lem:logsinglenit}.
    
    If $t_1, t_2\neq0$ and $\left(t_1,t_2\right)$ is linearly independent over $\mathbb{Q}$, then we conclude by Lemma~\ref{lem:logdoublenit}.
    
    If $t_1=t_2=0$, since $u=z$ and $v=w$, we immediately have $S_j\gg1$.

    Therefore, we have shown that in all cases,
    $$ \frac{1}{\log{Y_j}} \sum_{\substack{\ell \le Y_j \\ n_\ell\in\mathcal{A}\left(W^\prime Y_j+R\right)}} \frac{1}{\ell} \gg\varepsilon^2. $$
    This implies that, for $j$ sufficiently large,
    \begin{equation*}
        \begin{aligned}
            \frac{1}{\log{\left(W^\prime Y_j+R\right)}} \sum_{n\in\mathcal{A}\left(W^\prime Y_j+R\right)} \frac{1}{n}
            &\ge \frac{1}{\log{\left(3W^\prime Y_j\right)}} \sum_{\substack{\ell \le Y_j \\ n_\ell\in\mathcal{A}\left(W^\prime Y_j+R\right)}} \frac{1}{n_\ell} \\
            &\gg \frac{1}{W'\log{Y_j}} \sum_{\substack{\ell \le Y_j \\ n_\ell\in\mathcal{A}\left(W^\prime Y_j+R\right)}} \frac{1}{\ell} \\
            &\gg \frac{\varepsilon^2}{W^\prime},
        \end{aligned}
    \end{equation*}
    contradicting the fact that
    $$ \frac{1}{\log{\left(W^\prime Y_j+R\right)}} \sum_{n\in\mathcal{A}\left(W^\prime Y_j+R\right)} \frac{1}{n} \xrightarrow[j\to+\infty]{} 0. $$
\end{proof}

\section{Proof of Proposition~\ref{prop:counterex}}\label{sec:counterex}

We shall construct a multiplicative function $f:\mathbb{N}\to\mathbb{C}$ such that $f(n)$ is close to $f(n+1)$ for most integers $n$. To achieve this, we construct $f$ so that $f(n)$ can be approximated by some $n^{it}$ on long intervals of integers $n$. It is possible to construct such long enough intervals of $n$. However, it is worth noting that the construction given below does not extend to logarithmic averages, since these intervals are not sufficiently long on the logarithmic scale.

We use the function $f$ constructed by Klurman, Mangerel and Ter\"av\"ainen in \cite[Proposition 1.3]{klurman2023elliott}. For the sake of completeness, we recall their construction.

Set $t_1=100$ and $f(p)=1$ for $p\le t_1$. Recursively, assume that $f(p)$ is defined for all $p\le t_m$. By Kronecker’s theorem, the set $\left\{\left(p^{is}\right)_{p\le t_m} : s>e^{t_m}\right\}$ is dense in $\prod_{p\le t_m}\mathbb{T}$. It follows that there exists $s_{m+1}>e^{t_m}$ such that
$$ \forall p\le t_m,\ \left|f(p)-p^{is_{m+1}}\right|\le\frac{1}{t_m^2}. $$
Define $t_{m+1}=e^{s_{m+1}}$, and for $t_m<p\le t_{m+1}$, set $f(p)=p^{is_{m+1}}$.

This defines a completely multiplicative function $f:\mathbb{N}\rightarrow\mathbb{T}$, as well as two sequences $\left(s_m\right)$ and $\left(t_m\right)$ that tend to $+\infty$ as $m\rightarrow+\infty$. Moreover, it is straightforward to verify that $f$ has dense image, and that for all $K \in \mathbb{N}$, $(f^K,f^K) \notin \mathcal{F} \cup \mathcal{G}$.

Let $n\in\left[s_{m+1}^2, t_{m+1}\right]$ be such that, whenever $p^\ell|n$ for some $p\le t_m$, we have $\ell\le10\log{t_m}$. Then,
\begin{equation*}
    \begin{aligned}
        f(n)
        &= n^{is_{m+1}} \left( 1+O\left(\frac{1}{t_m^2}\right) \right)^{\sum_{\substack{p \le t_m \\ p^\ell||n}} \ell}
        = n^{is_{m+1}}\left(1+O\left(\frac{1}{t_m^2}\right)\right)^{O\left(t_m\right)} \\
        &= n^{is_{m+1}}\left(1+O\left(\frac{1}{t_m}\right)\right).
    \end{aligned}
\end{equation*}
Hence, for all $n\in\left[s_{m+1}^2, t_{m+1}-1\right]$ be such that, whenever $p^\ell|n(n+1)$ for some $p\le t_m$, we have $\ell\le10\log{t_m}$, we obtain
\begin{equation*}
    \begin{aligned}
        f(n)\overline{f(n+1)}
        &= \left(1+\frac{1}{n+1}\right)^{is_{m+1}}\left(1+O\left(\frac{1}{t_m}\right)\right) \\
        &= \left(1+O\left(\frac{s_{m+1}}{s_{m+1}^2}\right)\right)\left(1+O\left(\frac{1}{t_m}\right)\right)
        = 1+o(1).
    \end{aligned}
\end{equation*}
Further, the number of $n\le t_{m+1}-1$ such that there exists $p^\ell|n(n+1)$ with $p\le t_m$ and $\ell>10\log{t_m}$ is
$$
\le \sum_{\substack{p \le t_m \\ \ell>10\log{t_m}}} \sum_{\substack{n \le t_{m+1}-1 \\p^\ell|n(n+1)}} 1
\le 2t_{m+1}\sum_{p\le t_m}\sum_{\ell>10\log{t_m}}\frac{1}{p^\ell}
= o\left(t_{m+1}\right).
$$
It follows
$$
\frac{1}{t_{m+1}} \sum_{\substack{n \le t_{m+1} \\ \left|f(n)\overline{f(n+1)}+1\right|\le\frac{1}{3}}} 1
\le \frac{s_{m+1}^2}{t_{m+1}}+o(1)
\rightarrow 0.
$$

\section{Some consequences of Theorem \ref{thm:main}}\label{sec:corollaries}

The aim of this section is to prove two corollaries of Theorem~\ref{thm:main}. First, Corollary~\ref{cor:DKP2} generalizes the following conjecture of De Koninck, K\'atai and Phong \cite[Conjecture 4]{de2019some} in the case of completely multiplicative functions.

\begin{conjecture}
    Let $f:\mathbb{N}\to\mathbb{T}$ be a multiplicative function such that there exist some $w \in \mathbb{T}$ and some $\varepsilon>0$ such that
    $$ \liminf\limits_{x\to+\infty} \frac{1}{x} \sum_{\substack{n \le x \\ |f(n)\overline{f(n+1)}-w| \le \varepsilon}} 1 = 0. $$
    Then, there exist $K \ge 1$ and $t \in \mathbb{R}$ such that $f^K(n)=n^{it}$ for all $n \in \mathbb{N}$.
\end{conjecture}

\begin{proof}[Proof of Corollary~\ref{cor:DKP2}]
    If the image of $f$ is not dense, then it is finite. Indeed, in this case, if we write $f(p)=e(\alpha_p)$ for all primes $p$, then $(\alpha_p)_p$ has to be a sequence of rational numbers with bounded denominators. Then, there exists $K \in \mathbb{N}$ such that $f^K=1$.
    
    Assume from now on that the image of $f$ is dense in $\mathbb{T}$. The hypothesis implies
    $$ \liminf\limits_{x\to+\infty} \frac{1}{\log x} \sum_{\substack{n \le x \\ |f(n)-w|, |f(n+1)-1| \le \frac{\varepsilon}{2}}} \frac{1}{n} = 0. $$
    By Corollary~\ref{cor:otherdensities}, it suffices to show that $(f^K,f^K) \notin \mathcal{F}$ for all $K \in \mathbb{N}$. Let $K \in \mathbb{N}$, assume by contradiction that there exist a prime $p$ and an irrational number $\alpha$ such that $f(p)=e(\alpha)$ and $f^K(q)=1$ for all primes $q \neq p$. Assume further that $K$ is minimal with this property. Then, by Lemma~\ref{lem:nonpret}, for all $A$ sufficiently large and for all $x\geq x_{0}=x_{0}\left(A\right)$, $f$ is $(A,x)$-non-pretentious. Thus, by Lemma~\ref{lem:logtao}, for all integers $j \in [1,K-1]$, we have
    \begin{equation*}
        \begin{multlined}
            \frac{1}{\log{x}}\sum_{\substack{n\leq x \\ p^{k}||n}} \frac{{\left(f\left(\frac{n}{p^{k}}\right)\overline{f(n+1)}\right)}^{j}}{n} \\
            = \sum_{r=1}^{p-1} \frac{1}{\log{x}}\sum_{0\leq m\leq \frac{x-p^{k}r}{p^{k+1}}} \frac{{\left(f(pm+r)\overline{f(p^{k+1}m+p^{k}r+1)}\right)}^{j}}{p^{k+1}m+p^{k}r}
            \to 0.
        \end{multlined}
    \end{equation*}
    Fix $k \in \mathbb{N}$ such that $|e(k\alpha)-w|\le\varepsilon$. It follows by orthogonality in ${\mu }_{K}$ that
    \begin{equation*}
        \begin{aligned}
            \frac{1}{\log x} \sum_{\substack{n \le x \\ |f(n)\overline{f(n+1)}-w| \le \varepsilon}} \frac{1}{n}
            &\ge \frac{1}{\log{x}}\sum_{\substack{n\leq x \\ p^{k}||n \\ f\left(\frac{n}{p^{k}}\right)\overline{f(n+1)}=1}} \frac{1}{n} \\
            &= \frac{1}{K}\sum_{j=0}^{K-1} \frac{1}{\log x} \sum_{\substack{n\leq x \\ p^{k}||n}} \frac{{\left(f\left(\frac{n}{p^{k}}\right)\overline{f(n+1)}\right)}^{j}}{n} \\
            &\xrightarrow[x\to+\infty]{} \frac{1}{p^{k}}\left(1-\frac{1}{p}\right) > 0,
        \end{aligned}
    \end{equation*}
    which contradicts the hypothesis.
\end{proof}

\begin{remark}
    The statement of Corollary~\ref{cor:DKP2} is optimal. Indeed, let $f$ be a completely multiplicative function such that $f^K(n)=n^{it}$ for some minimal $K \ge 1$ and some $t \in \mathbb{R}$. Then, write $f(n)=h(n)n^{it/K}$. Note that $n^{it/K}(n+1)^{-it/K}\xrightarrow[n\to+\infty]{}1$, and that $h$ is a function taking values in the set $\mu_K$ of $K$-th roots of unity. Thus, taking $w\in\mathbb{T}\backslash\mu_K$ and $\varepsilon>0$ sufficiently small, we have
    $$
    \frac{1}{x} \sum_{\substack{n \le x \\ |f(n)\overline{f(n+1)}-w| \le \varepsilon}} 1
    \le \frac{1}{x} \left( \sum_{\substack{n \le x \\ |h(n)\overline{h(n+1)}-w| \le \varepsilon}} 1 + O(1) \right)
    = O\left(\frac{1}{x}\right).
    $$
\end{remark}

Next, Corollary~\ref{cor:katai2} is motivated by the following problem. Let $f:\mathbb{N}\to\mathbb{T}$ be a multiplicative function such that $f(n)-f(n+1) \xrightarrow[n\to+\infty]{}0$. Then, do we have $f(n)=n^{it}$ for some $t\in\mathbb{R}$? In 1996, Wirsing \cite{wirsing1996conjecture} proved that this is true. In addition, it is an exercise to deduce from this that, if $f,g:\mathbb{N}\to\mathbb{T}$ are two completely multiplicative functions such that $f(n)-g(n+1) \xrightarrow[n\to+\infty]{}0$, then $f(n)=g(n)=n^{it}$ for some $t\in\mathbb{R}$. In 1983, K\'atai \cite{katai1983some} conjectured a strengthened version of Wirsing's result, assuming that $\lim\limits_{x\to+\infty} \frac{1}{\log x} \sum_{n \le x} \frac{|f(n)-f(n+1)|}{n} = 0$. It was proved by Klurman \cite{klurman2017correlations} that Wirsing's conclusion holds with this weaker hypothesis. Then, it is natural to ask whether $\lim\limits_{x\to+\infty} \frac{1}{\log x} \sum_{n \le x} \frac{|f(n)-g(n+1)|}{n} = 0$ implies $f(n)=g(n)=n^{it}$ for some $t\in\mathbb{R}$. It turns out that this is not a direct corollary of Klurman's result. We now show that the statement nevertheless holds.

\begin{proof}[Proof of Corollary~\ref{cor:katai2}]
    First, assume that $f$ and $g$ do not have dense images. Then, there exists an integer $K\ge 1$ such that $f^{K}=g^{K}=1$. If $f=g=1$, then there is nothing to do. Thus, assume that $f\neq 1$ and consider $z\in {\mu }_{K}\backslash \{1\}$. Then, the hypothesis implies that
    \begin{equation}\label{cor:katai2-01}
        \liminf\limits_{x\to+\infty} \frac{1}{\log{x}}\sum_{\substack{n\leq x \\ f(n)=z \\ g(n+1)=1}} \frac{1}{n} = 0.
    \end{equation}
    On the other hand, by orthogonality in ${\mu }_{K}$, we have
    \begin{equation}\label{cor:katai2-02}
        \frac{1}{\log{x}}\sum_{\substack{n\leq x \\ f(n)=z \\ g(n+1)=1}} \frac{1}{n}=\frac{1}{K^{2}}\sum_{a=0}^{K-1} \frac{1}{z^{a}}\sum_{b=0}^{K-1} \frac{1}{\log{x}}\sum_{n\leq x} \frac{f^{a}\left(n\right)g^{b}(n+1)}{n}.
    \end{equation}
    By Lemmas \ref{lem:nonpret} and \ref{lem:logtao}, the only term that survive in \eqref{cor:katai2-02} as $x\to+\infty$ is the one that corresponds to $(a,b)=(0,0)$. Thus,
    $$ \frac{1}{\log{x}}\sum_{n\leq x} \frac{\left|f(n)-g(n+1)\right|}{n}\geq \frac{1}{K\log{x}}\sum_{\substack{n\leq x \\ f(n)=z \\ g(n+1)=1}} \frac{1}{n}\rightarrow \frac{1}{K^{3}} > 0, $$
    which contradicts \eqref{cor:katai2-01}.

    Now, assume that $f$ has a dense image, and that there exists an integer $K \ge 1$ such that $g^{K}=1$. Then, by the elementary inequality
    $$ \left|f^{K}(n)-g^{K}(n+1)\right|\leq K\left|f(n)-g(n+1)\right|, $$
    the hypothesis implies
    $$ \liminf\limits_{x\to+\infty} \frac{1}{\log{x}}\sum_{n\leq x} \frac{\left|f^{K}(n)-1\right|}{n} = 0. $$
    Since
    $$ \sum_{n\leq x} \frac{\left|f^{K}(n)-1\right|}{n}\geq \frac{1}{3}\sum_{\substack{n\leq x \\ \left|f^{K}(n)-1\right|> \frac{1}{3}}} \frac{1}{n}\geq \frac{1}{3}\sum_{\substack{n\leq x \\ \left|f^{K}(n)+1\right|\leq \frac{1}{3}}} \frac{1}{n}, $$
    we deduce that
    \begin{equation}\label{cor:katai2-03}
        \liminf\limits_{x\to+\infty} \frac{1}{\log{x}}\sum_{\substack{n\leq x \\ \left|f^{K}(n)+1\right|\leq \frac{1}{3}}} \frac{1}{n} = 0.
    \end{equation}
    Now, note that one can always choose a function $g\in \overline{\mathcal{M}}$ such that for all $K_{1},K_{2}\geq 1$, we have $\left(f^{KK_{1}},g^{K_{2}}\right)\notin \mathcal{F}\cup \mathcal{G}$. Thus, \eqref{cor:katai2-03} contradicts Theorem~\ref{thm:main}.

    From now on, we assume that $f$ and $g$ both have dense images. Let $\varepsilon>0$. Then, by Rankin’s trick, we have
    $$
    \frac{1}{\log{x}} \sum_{\substack{n \le x \\ \left|f(n)-g(n+1)\right|>\varepsilon}} \frac{1}{n}
    \le \frac{1}{\varepsilon\log{x}}\sum_{n\le x}\frac{\left|f(n)-g(n+1)\right|}{n}.
    $$
    In particular, taking $\varepsilon$ sufficiently small, we have
    $$ \liminf\limits_{x\to+\infty}  \frac{1}{\log{x}} \sum_{\substack{n \le x \\ \left|f(n)-1\right|,\left|g(n+1)+1\right|\le\varepsilon}} \frac{1}{n} = 0. $$
    By Theorem~\ref{thm:main}, there exists an integer $K\ge1$ such that $\left(f^K,g^K\right)\in\mathcal{F}\cup\mathcal{G}$.

    Let us first assume that $\left(f^K,g^K\right)\in\mathcal{F}$. As before, the hypothesis implies
    \begin{equation}\label{cor:katai2-1}
        \liminf\limits_{x\to+\infty} \frac{1}{\log{x}}\sum_{n\le x}\frac{\left|f^K(n)-g^K(n+1)\right|}{n} = 0.
    \end{equation}
    By definition of $\mathcal{F}$, $f^K(p)$ and $g^K(p)$ are equal to $1$ at all primes $p$ except for a single prime $p=p_0$. Let $\alpha\in\mathbb{R}\backslash\mathbb{Q}$ be such that $f^K\left(p_0\right)=e(\alpha)$. Restricting the summation in \eqref{cor:katai2-1} to integers $n$ such that $p_0||n$, in which case $\left|f^K(n)-g^K(n+1)\right|=\left|e(\alpha)-1\right|>0$, we have that the left-hand side of \eqref{cor:katai2-1} is $\geq\left|e(\alpha)-1\right|\left(\frac{1}{p_0}-\frac{1}{p_0^2}\right)>0$, which yields a contradiction. As a consequence, we have $\left(f^K,g^K\right)\in\mathcal{G}$.

    It remains to prove that $K$ can be taken to be equal to $1$. Take $K\ge1$ minimal such that $\left(f^K,g^K\right)\in\mathcal{G}$, and let us prove that $K=1$. Let $t\in\mathbb{R}$ be such that $f^K(n)=g^K(n)=n^{iKt}$. Write $f(n)=f_0(n)n^{it}$ and $g(n)=g_0(n)n^{it}$, where $f_0$ and $g_0$ are completely multiplicative functions taking values in $\mu_K$. From the hypothesis, and using the observation that $n^{it}(n+1)^{-it}=1+O_t\left(\frac{1}{n}\right)$, we have
    $$ \liminf\limits_{x\to+\infty} \frac{1}{\log x} \sum_{n \le x} \frac{|f_0(n)-g_0(n+1)|}{n} = 0. $$
    This implies that, for all $z,w\in\mu_K$ distinct, we have
    \begin{equation}\label{cor:katai2-2}
        \liminf\limits_{x\to+\infty} S(x;z,w) = 0,
    \end{equation}
    where
    $$ S(x;z,w) := \frac{1}{\log{x}} \sum_{\substack{n \le x \\ f_0(n)=z \\ g_0(n+1)=w}} \frac{1}{n}. $$
    Next, let $K_1$ and $K_2$ be minimal positive integers such that $f_0^{K_1}=\widetilde{\chi_1}$ and $g_0^{K_2}=\widetilde{\chi_2}$, for some Dirichlet characters $\chi_1$ and $\chi_2$. Let $q$ be the period of $\chi_2$. By orthogonality in $\mu_K$, we have
    \begin{equation*}
        \begin{aligned}
            S(x;z,w)
            &\ge \frac{1}{q\log x} \sum_{\substack{m \le \frac{x}{q} \\ f_0(mq)=z \\ g_0(mq+1)=w}} \frac{1}{m} \\
            &=\frac{1}{K^2q}\sum_{a\le K}\sum_{b\le K}{f_0^a(q)z^{-a}w^{-b}\frac{1}{\log{x}}\sum_{m\le\frac{x}{q}}\frac{f_0^a(m)g_0^b(mq+1)}{m}}.
        \end{aligned}
    \end{equation*}
    Let $\varepsilon>0$. By Lemmas \ref{lem:nonpret} and \ref{lem:logtao}, for $x$ sufficiently large in terms of $\varepsilon$, we have
    $$ S(x;z,w)\geq\frac{1}{K^2q}\sum_{c\le\frac{K}{K_1}}\sum_{d\le\frac{K}{K_2}}{{\widetilde{\chi_1}}^c(q)z^{-K_1c}w^{-K_2d}\frac{1}{\log{x}}\sum_{m\le\frac{x}{q}}\frac{{\widetilde{\chi_1}}^c(m)}{m}}+O\left(\varepsilon\right), $$
    where we used the fact that $\widetilde{\chi_2}(mq+1)=1$. Denote by $r$ the order of $\chi_1$. By \cite[Lemma 2.3.1]{granvillemultiplicative}, ${\widetilde{\chi_1}}^c$ does not pretend to any function of the form $n^{it}$ ($t\in\mathbb{R}$) when $r \nmid c$. Thus, by the classical Hal\'asz theorem (see e.g. \cite[Lemma 2.1.11]{granvillemultiplicative}), we have
    \begin{equation*}
        \begin{aligned}
            S(x;z,w)
            &\ge \frac{1}{K^2q}\sum_{e\le\frac{K}{K_1r}}\sum_{d\le\frac{K}{K_2}}{z^{-K_1re}w^{-K_2d}\frac{1}{\log{x}}\sum_{m\le\frac{x}{q}}\frac{1}{m}}+O\left(\varepsilon\right) \\
            &= \frac{\mathbb{1}_{z^{K_1r}=1}\mathbb{1}_{w^{K_2}=1}}{KK_2}+O\left(\varepsilon\right),
        \end{aligned}
    \end{equation*}
    since $K_1r,K_2|K$. Then, taking $\varepsilon$ sufficiently small, \eqref{cor:katai2-2} implies that for all $z\neq w$ in $\mu_K$, at least one of $z\notin\mu_{K_1r}$ or $w\notin\mu_{K_2}$ holds. This means that $\mu_{K_1r}\cup\mu_{K_2}=\left\{1\right\}$. Thus, $K_1=K_2=1$ and $r=1$. By symmetry, if $s$ denotes the order of $\chi_2$, then $s=1$. Moreover, it is straightforward to see that $K_1r$ is the minimal integer such that $f_0^{K_1r}=1$, and that $K_2s$ is the minimal integer such that $g_0^{K_2s}=1$. By minimality of $K$, we have $K=\operatorname{lcm}{(K_1r,K_2s)}=1$.
\end{proof}

\bibliographystyle{plain}
\bibliography{bibliography}

@article{tao2016logarithmically,
  title={The logarithmically averaged {C}howla and {E}lliott conjectures for two-point correlations},
  journal={Forum of Mathematics, Pi},
  author={Tao, Terence},
  volume={4},
  year={2016},
  publisher={Cambridge University Press}
}

@article{klurman2018orbits,
  title={On the orbits of multiplicative pairs},
  author={Klurman, Oleksiy and Mangerel, Alexander P},
  journal={Algebra {\&} Number Theory},
  volume={14},
  number={1},
  year={2020}
}

@article{khale2024explicit,
  title={An explicit {V}inogradov--{K}orobov zero-free region for {D}irichlet {$L$}-functions},
  author={Khale, Tanmay},
  journal={The Quarterly Journal of Mathematics},
  volume={75},
  number={1},
  pages={299--332},
  year={2024},
  publisher={Oxford University Press UK}
}

@misc{granvillemultiplicative,
  title={Multiplicative number theory: The pretentious approach. 2014. Opublicerat manuskript},
  author={Granville, Andrew and Soundarajan, Kannan}
}

@article{daroczy1989characterization,
  title={Characterization of additive functions with values in the circle group},
  author={Dar\'oczy, Zolt\'an and K\'atai, Imre},
  journal={Publicationes Mathematicae Debrecen},
  volume={36},
  number={1--4},
  year={1989},
  pages={1--7}
}

@article{de2021variations,
  title={On the variations of completely multiplicative functions at consecutive arguments},
  author={De Koninck, Jean-Marie and K{\'a}tai, Imre and Phong, Bui Minh},
  journal={Publicationes Mathematicae Debrecen},
  volume={98},
  number={1-2},
  pages={219--230},
  year={2021}
}

@inproceedings{de2019some,
  title={On some consequences of recently proved conjectures},
  author={De Koninck, Jean-Marie and K{\'a}tai, Imre and Phong, Bui Minh},
  booktitle={Annales Universitatis Scientiarum Budapestinensis de Rolando E{\"o}tv{\"o}s Nominatae. Sectio Computatorica},
  volume={49},
  year={2019}
}

@article{klurman2021multiplicative,
  title={Multiplicative functions that are close to their mean},
  author={Klurman, Oleksiy and Mangerel, Alexander and Pohoata, Cosmin and Ter{\"a}v{\"a}inen, Joni},
  journal={Transactions of the American Mathematical Society},
  volume={374},
  number={11},
  pages={7967--7990},
  year={2021}
}

@article{klurman2023elliott,
  title={On {E}lliott's conjecture and applications},
  author={Klurman, Oleksiy and Mangerel, Alexander P and Ter{\"a}v{\"a}inen, Joni},
  journal={arXiv:2304.05344},
  year={2023}
}

@article{wirsing1996conjecture,
  title={On a conjecture of {K}{\'a}tai for additive functions},
  author={Wirsing, Eduard and Yuan-Sheng, Tang and Pin-Tsung, Shao},
  journal={Journal of Number Theory},
  volume={56},
  number={2},
  pages={391--395},
  year={1996},
  publisher={Elsevier}
}

@article{klurman2017correlations,
  title={Correlations of multiplicative functions and applications},
  author={Klurman, Oleksiy},
  journal={Compositio Mathematica},
  volume={153},
  number={8},
  pages={1622--1657},
  year={2017},
  publisher={London Mathematical Society}
}

@article{katai1983some,
  title={Some results and problems on arithmetical functions},
  author={K{\'a}tai, Imre},
  journal={Studia Sci. Math. Hungar.},
  volume={16},
  pages={289--295},
  year={1983}
}

@article{charamaras2025multiplicative,
  title={On multiplicative recurrence along linear patterns},
  author={Charamaras, Dimitrios and Mountakis, Andreas and Tsinas, Konstantinos},
  journal={Journal of the London Mathematical Society},
  volume={112},
  number={3},
  pages={e70292},
  year={2025},
  publisher={Wiley Online Library}
}

@article{klurman2018rigidity,
  title={Rigidity theorems for multiplicative functions},
  author={Klurman, Oleksiy and Mangerel, Alexander P},
  journal={Mathematische Annalen},
  volume={372},
  number={1},
  pages={651--697},
  year={2018},
  publisher={Springer}
}

@article{leung2024multiplicative,
  title={Multiplicative recurrence of {M}\"{o}bius transformations},
  author={Leung, Sun-Kai and T{\'a}fula, Christian},
  journal={arXiv:2409.12936},
  year={2024}
}

@article{donoso2023additive,
  title={Additive averages of multiplicative correlation sequences and applications},
  author={Donoso, Sebasti{\'a}n and Le, Anh N and Moreira, Joel and Sun, Wenbo},
  journal={Journal d'Analyse Math{\'e}matique},
  volume={149},
  number={2},
  pages={719--761},
  year={2023},
  publisher={Springer}
}

\end{document}